\newtheorem{thm}{Theorem}[section]
\newtheorem{prop}[thm]{Proposition}
\newtheorem{lem}[thm]{Lemma}
\newtheorem{cor}[thm]{Corollary}
\theoremstyle{remark}
\newtheorem{rem}[thm]{Remark}
\newcommand{\ZZ}{\mathbb{Z}}
\newcommand{\RR}{\mathbb{R}}
\DeclareMathOperator{\Aut}{Aut}
\DeclareMathOperator{\wt}{wt}
\begin{document}
%%%%%%%%%%%%%%%%%%%%%%%%%%%%%%%%%%

\title{Extremal Type~I $\ZZ_k$-codes and 
$k$-frames of odd unimodular lattices}

\author{
Masaaki Harada\thanks{
Research Center for Pure and Applied Mathematics, 
Graduate School of Information Sciences, 
Tohoku University, Sendai 980--8579, Japan.
email: mharada@m.tohoku.ac.jp.
This work % of the first author
was partially carried out
at Yamagata University.}
}

\maketitle

% {\bf History: 13/01/22, Version 2.6 was submitted to arXiv}

\begin{abstract}
%For some extremal (optimal) odd unimodular lattices $L$
%in dimensions $12,16,20,28,32,36,40$ and $44$,
%we determine all integers $k$ such that $L$ contains a
%$k$-frame.
%This result yields the existence of 
%an extremal Type~I $\ZZ_{k}$-code of lengths 
%$12,16,20,32,36,40$ and $44$, and a near-extremal Type~I $\ZZ_k$-code of
%length $28$ for positive integers $k$ with only a few exceptions.
%MiezakiS%
For some extremal (optimal) odd unimodular lattice $L$ in
dimensions $12,16,20,28,32,36,40$ and $44$,
we determine all integers $k$ such that $L$ contains a
$k$-frame.
This result yields the existence of 
an extremal Type~I $\ZZ_{k}$-code of lengths 
$12,16,20,32,36,40$ and $44$, and a near-extremal Type~I $\ZZ_k$-code of
length $28$ for positive integers $k$ with only a few exceptions.
%In addition, for some Niemeier lattices $N$, 
%we also determine all integers $k$ such that $N$ contains a
%$2k$-frame.
%N In addition, 
%N for each $N$ of some ten Niemeier lattices, 
%N we show that $N$ contains a $2k$-frame if and only if 
%N $k$ is a positive integer with $k \ge 2$.
%N %MiezakiE%
\end{abstract}

%%%%%%%%%%%%%%%%%%%%%%%%%%%%%%%%%%
\section{Introduction}\label{Sec:1}

Self-dual codes and unimodular lattices are studied  from several
viewpoints (see~\cite{SPLAG} for an extensive bibliography). Many
relationships between self-dual codes and unimodular lattices are
known and there are similar situations between two subjects.
%% As a remarkable example
As a typical example,
it is known that a unimodular lattice $L$ contains a $k$-frame 
if and only if there is a self-dual $\ZZ_{k}$-code $C$ such that
$L$ is isomorphic to the lattice obtained from $C$ by
Construction A,
where $\ZZ_{k}$ is the ring of integers modulo $k$ with $k \ge 2$.

% As described in~\cite{RS-Handbook},
% self-dual codes are an important class of linear codes for both
% theoretical and practical reasons.
% It is a fundamental problem to classify self-dual codes
% of modest lengths 
% and determine the largest minimum weight among self-dual codes
% of that length.
Type~II $\ZZ_{2k}$-codes were defined in~\cite{BDHO} 
as a class of self-dual codes,
%with the property that all Euclidean weights are divisible by $4k$,
which are related to even unimodular lattices.
% Much work has been done concerning 
% Type~II $\ZZ_{2k}$-codes.
% For binary Type~II codes,
% much work has been done concerning the above fundamental problem
% (see e.g.~\cite{BHM, CPS, SPLAG, RS-Handbook}).
%%%%%%%%%%%
% For general $k$,
If $C$ is a Type~II $\ZZ_{2k}$-code of length $n \le 136$, 
then we have the bound on the minimum Euclidean weight
$d_E(C)$ of $C$ as follows:
$d_E(C) \le 4k \left\lfloor \frac{n}{24} \right\rfloor +4k$
for every positive integer $k$ (see~\cite{HM12}).
A Type~II $\ZZ_{2k}$-code meeting the bound 
with equality is called {extremal} $(n \le 136)$.
It was shown in~\cite{Chapman, GH01} that
the Leech lattice, which is one of the most remarkable lattices,
contains a $2k$-frame for every integer $k \ge 2$.
This result yields the existence of an extremal Type~II $\ZZ_{2k}$-code of
length $24$  for every positive integer $k$.
Recently, the existence of
an extremal Type~II $\ZZ_{2k}$-code of lengths $n=32,40,48,56,64$ 
was established in~\cite{HM12}
for every positive integer $k$.
This was done by finding a $2k$-frame in some
extremal even unimodular lattices in these dimensions $n$.

%%%%%%%%%%%%%%%%%%%%%
% The existence of a $2k$-frame ($k \ge 2$) in the Leech lattice 
% $\Lambda$ is
% used in \cite{Shimakura} to show that 
% the Moonshine vertex operator algebra $V^\natural$
% contains a vertex operator subalgebra 
% isomorphic to the tensor product of $24$ copies of
% the fixed-point subspace $V^+_L$ of the lattice
% vertex operator algebra $V_L$, where
% $L=\ZZ \alpha$ with $(\alpha,\alpha)=2k$
% and $\alpha \in \Lambda$.
% It would be worthwhile to establish the existence of a 
% $k$-frame in some extremal (odd) unimodular lattices in this
% direction.
% % There is not more work concerning Type~I codes than Type~II codes.
% % This is a motivation of our study of extremal Type~I $\ZZ_k$-codes for
% % general $k$.
% % On the other hand, the odd Leech lattice contains a $k$-frame for 
% % every positive integer $k$ with $k \ge 3$~\cite{Miezaki}.
% % This also motivates our investigation of the existence of
% % a $k$-frame of extremal odd unimodular lattices.
% Recently, it was shown in~\cite{Miezaki} that the odd Leech 
% lattice contains a $k$-frame for every positive integer $k$ with 
% $k \ge 3$.
% This also motivates our investigation of the existence of
% a $k$-frame in extremal odd unimodular lattices.
%%%%%%%%%%%%%%%%%%%%%
Recently, it was shown in~\cite{Miezaki} that the odd Leech 
lattice contains a $k$-frame for every integer $k$ with 
$k \ge 3$.
This motivates our investigation of the existence of
a $k$-frame in extremal odd unimodular lattices.
In this paper, 
for some extremal (optimal) odd unimodular lattices $L$ in
dimensions $12,16,20,28,32,36,40$ and $44$,
we determine all integers $k$ such that $L$ contains a
$k$-frame.
This result yields the existence of 
an extremal Type~I $\ZZ_{k}$-code of lengths 
$12,16,20,32,36,40$ and $44$, and a near-extremal Type~I $\ZZ_k$-code of
length $28$ for positive integers $k$ with only a few small exceptions.
This paper is organized as follows. In Section~\ref{sec:Pre}, we
give definitions and some basic properties of self-dual codes
and unimodular lattices used in this paper.
The notion of extremal Type~I $\ZZ_k$-codes of length $n$
is given for $n \le 48$ and $k \ge 2$.
Lemma~\ref{lem:frame} gives a reason why we consider 
unimodular lattices only in  dimension $n$ divisible by $4$.
% %% Section 3
% In Section~\ref{sec:NT},
% using the theory of modular forms (see~\cite{Miyake} for details),
% we derive some number theoretical results (Theorem~\ref{thm:prime}),
% which are used in Section~\ref{sec:frame}.
%% Section 4
In Section~\ref{sec:frame}, we provide a method for
constructing $m$-frames in unimodular lattices, which
are constructed from some self-dual $\ZZ_k$-codes by Construction A
(Proposition~\ref{prop:const}).
% This method slightly generalizes~\cite[Propositions 3.3 and 3.6]{HM12}.
This method is a slight generalization 
of~\cite[Propositions 3.3 and 3.6]{HM12}.
% Some special cases of this method can be found 
% in \cite{Chapman,HM12,Miezaki}.
%Using Theorem~\ref{thm:prime} and Proposition~\ref{prop:const},
%we give $k$-frames in 
%the unique extremal odd unimodular lattice
%in dimensions $12,16$ and 
%some extremal (optimal) odd unimodular lattices
%in dimensions $20,28, 32,36,40$ and $44$,
%which are listed in Table~\ref{Tab:L}, 
%for all positive integers $k$ satisfying the condition
%$(\star)$ in Table~\ref{Tab:L}
%(Lemma~\ref{lem:key}).
%MiezakiS%
Using Proposition~\ref{prop:const},
we give $k$-frames in 
the unique extremal odd unimodular lattice
in dimensions $12,16$, 
some extremal (optimal) odd unimodular lattices
in dimensions $20,28, 32,36,40$ and $44$, 
which are listed in Table~\ref{Tab:L}, 
for all integers $k$ satisfying the condition
$(\star)$ in Table~\ref{Tab:L}
(Lemma~\ref{lem:key}).
%MiezakiE%
%% Section 5
In Section~\ref{sec:main}, 
several extremal (near-extremal) Type~I $\ZZ_{k}$-codes are 
explicitly constructed
for some positive integers $k$.  %, especially the above exceptions $k$.
Then we establish the existence of
a $k$-frame in the extremal (optimal) unimodular lattices $L$
in dimensions $12,16,20,28, 32,36$, which are listed in Table~\ref{Tab:L} 
(except only lattices 
$A_3(C_{20,3}(D'_{10}))$ and $A_5(C_{20,5}(D''_{10}))$),
for every integer $k$ with $k \ge \min(L)$, where $\min(L)$
denotes the minimum norm of $L$.
%(Theorems \ref{thm:D12}, \ref{thm:D82}, \ref{thm:20}, \ref{thm:28}, 
%\ref{thm:32}, \ref{thm:36}).
%%% 
As a consequence, 
by considering the even unimodular neighbors of 
the above extremal odd unimodular lattice in dimension $32$,
it is shown that 
the $32$-dimensional Barnes--Wall lattice $BW_{32}$
contains a $2k$-frame if and only if $k$ is
an integer with $k \ge 2$. 
%%%%
% We also discuss the positivity of coefficients of 
% the theta series of some extremal (optimal) unimodular lattices
% in dimension $n \le 36$.
%%%
When $n=40,44$, we show that there is an extremal odd unimodular 
lattice in dimension $n$ containing a $k$-frame
if and only if $k$ is an integer with $k \ge 4$. 
%(Lemmas \ref{lem:40}, \ref{lem:44}).
Using the above existence of $k$-frames,
the existence of 
an extremal Type~I $\ZZ_{k}$-code of lengths 
$n=12,16,20,32,36,40, 44$,  
and a near-extremal Type~I $\ZZ_k$-code of
length $n=28$ is established for a positive integer $k$,
where 
$k \ne 1,3$ if $n =32$ and $k \ne 1$ otherwise.
%(Corollaries \ref{cor:12}, \ref{cor:16}, \ref{cor:20},
%\ref{cor:28}, \ref{cor:32}, \ref{cor:36},
%Theorems \ref{thm:40}, \ref{thm:40}).
At the end of Section~\ref{sec:main}, we examine the existence of
both $k$-frames in optimal odd unimodular lattices in dimension $48$
and near-extremal Type~I $\ZZ_k$-codes of
length $48$.

%N %%%%%%%%%%%%%%%%% Section 6 %%%%%%%%%%%%%%%%%
%N Finally, in Section~\ref{sec:Niemeier}, 
%N we investigate the existence of
%N a $2k$-frame in the Niemeier lattices by an approach similar 
%N to that for the lattice $BW_{32}$. 
%N To do this,
%N we give $k$-frames in the odd unimodular lattices in dimension $24$, 
%N which are listed in Table~\ref{Tab:L}, 
%N for all positive integers $k$ satisfying the condition
%N $(\star)$ in Table~\ref{Tab:L}.
%N Then a consideration of the even unimodular neighbors of the
%N above odd unimodular lattices in dimension $24$
%N shows that $N$ contains a $2k$-frame if and only if 
%N $k$ is a positive integer with $k \ge 2$
%N for each $N$ of the ten Niemeier lattices with root systems:
%N %as the even unimdular neighbors 
%N %for the Niemeier lattice $N$ in the following list: 
%N \[
%N A_{12}^2,    
%N E_6^4,       
%N A_9^2D_6,    
%N A_8^3,       
%N A_7^2D_5^2,  
%N A_6^4,       
%N A_5^4D_4,    
%N A_4^6,       
%N A_3^8,       
%N A_2^{12}.
%N \]

All computer calculations in this paper were
done by {\sc Magma}~\cite{Magma}.

%{\bf 
%\begin{itemize}
%
%\item
%Will we consider the
%positivity of coefficients of theta series of extremal odd unimodular
%lattices?
%
%\end{itemize}
%}

%%%%%%%%%%%%%%%%%%%%%%%%%%%%%%%%%%
\section{Preliminaries}\label{sec:Pre}

In this section, we give definitions and some basic properties 
of self-dual codes and unimodular lattices 
used in this paper.
The notion of extremal Type~I $\ZZ_k$-codes of length $n$
is given for $n \le 48$ and $k \ge 2$.
%is also given for $n \le 48$ and $k \ge 2$.

%%%%
\subsection{Self-dual codes}
Let $\ZZ_{k}$ be the ring 
of integers modulo $k$, where $k$ 
is a positive integer. 
In this paper, we always assume that $k\geq 2$ and 
we take the set $\ZZ_{k}$ to be 
$\{0,1,\ldots,k-1\}$.
%%using whichever form is more convenient.  
A $\ZZ_{k}$-code $C$ of length $n$
(or a code $C$ of length $n$ over $\ZZ_{k}$)
is a $\ZZ_{k}$-submodule of $\ZZ_{k}^n$.
A $\ZZ_2$-code and a $\ZZ_3$-code
are called {\em binary} and {\em ternary}, respectively.
% The Euclidean weight of a codeword 
% $x=(x_1,x_2,\ldots,x_n)$ is 
% $\sum_{i=1}^n \min\{x_i^2,(k-x_i)^2\}$.
The {\em Euclidean weight} of a codeword $x=(x_1,\ldots,x_n)$ of $C$ is
$\sum_{\alpha=1}^{\lfloor k/2 \rfloor}n_\alpha(x) \alpha^2$, 
where $n_{\alpha}(x)$ denotes
the number of components $i$ with $x_i \equiv \pm \alpha \pmod k$ 
$(\alpha=1,2,\ldots,\lfloor k/2 \rfloor)$.
% It is trivial that the Euclidean weight is the same
% as the (usual) Hamming weight for the case $k=2,3$.
The {\em minimum Euclidean weight} $d_E(C)$ of $C$ is the smallest Euclidean
weight among all nonzero codewords of $C$.

%%%%%
A $\ZZ_{k}$-code $C$ is {\em self-dual} if $C=C^\perp$, where
the dual code $C^\perp$ of $C$ is defined as 
$\{ x \in \ZZ_{k}^n \mid x \cdot y = 0$ for all $y \in C\}$
under the standard inner product $x \cdot y$. 
For only even positive integers $2k$, 
a {\em Type~II} $\ZZ_{2k}$-code was defined in
\cite{BDHO} as a self-dual $\ZZ_{2k}$-code with the property that all
Euclidean weights are congruent to $0$ modulo $4k$.
%Euclidean weights are congruent to $0 \pmod {4k}$.
It is known that a Type~II $\ZZ_{2k}$-code of length $n$ exists
if and only if $n$ is divisible by $8$~\cite{BDHO}.
A self-dual code which is not Type~II is called {\em Type~I}.
Two self-dual $\ZZ_{k}$-codes $C$ and $C'$ are {\em equivalent} 
if there exists a monomial $(\pm 1, 0)$-matrix $P$ with 
$C' = C \cdot P$, where
$C \cdot P = \{ x P\:|\: x \in C\}$.  
% The automorphism group $\Aut(C)$ of $C$ is the group of all
% monomial $(\pm 1, 0)$-matrices $P$ with
% $C = C \cdot P$.

%%%%
\subsection{Unimodular lattices}\label{sec:2U}
A (Euclidean) lattice $L \subset \RR^n$
in dimension $n$
is {\em unimodular} if
$L = L^{*}$, where
the dual lattice $L^{*}$ of $L$ is defined as
$\{ x \in {\RR}^n \mid (x,y) \in \ZZ \text{ for all }
y \in L\}$ under the standard inner product $(x,y)$.
Two lattices $L$ and $L'$ are {\em isomorphic}, denoted $L \cong L'$,
if there exists an orthogonal matrix $A$ with
$L' = L \cdot A$, where $L \cdot A=\{xA \mid x \in L\}$.
The automorphism group $\Aut(L)$ of $L$ is the group of all
orthogonal matrices $A$ with $L = L \cdot A$.
The norm of a vector $x$ is defined as $(x, x)$.
The minimum norm $\min(L)$ of a unimodular
lattice $L$ is the smallest norm among all nonzero vectors of $L$.
%%%%%
The theta series $\theta_{L}(q)$ of $L$ is the formal power
series $\theta_{L}(q) = \sum_{x \in L} q^{(x,x)}$.
The kissing number of $L$ is the second nonzero coefficient of the
theta series.
%%%%%
A unimodular lattice with even norms is said to be {\em even}, 
and that containing a vector of odd norm is said to be {\em odd}.
% An odd  unimodular lattice exists for every dimension.
%Indeed, $\ZZ^n$ is an odd  unimodular lattice in dimension $n$.
An even unimodular lattice in dimension $n$
exists if and only
if $n$ is divisible by $8$, while an odd unimodular lattice
exists for every dimension. 

It was shown in~\cite{RS-bound} 
that a unimodular lattice $L$ in dimension $n$
has minimum norm $\min(L) \le 2 \lfloor \frac{n}{24} \rfloor+2$
unless $n=23$ when $\min(L) \le 3$ (see~\cite{Siegel}
for the case that $L$ is even).
A unimodular lattice meeting the bound
with equality is called {\em extremal}.
Any extremal unimodular lattice in dimension $24k$
has to be even~\cite{Gaulter}.
% which was conjectured in~\cite{RS-bound}.
Hence,  an odd unimodular lattice $L$
in dimension $24k$
satisfies $\min(L) \le 2k+1$.
We say that an odd unimodular lattice with the largest minimum norm
among all odd unimodular lattices in that dimension is {\em optimal}.

%%% Definition of neighbor and shadow
Let $L$ be a unimodular lattice.
Define $L_0=\{x \in L \mid (x,x) \equiv 0 \pmod 2\}$.
Then $L_0$ is a sublattice of $L$ of index $2$ if $L$ is
odd and $L_0=L$ if $L$ is even.
The {\em shadow} $S$ of $L$ is defined as
$S=L_0^* \setminus L$ if  $L$ is odd 
and as $S=L$ if $L$ is even~\cite{CS-odd}.
%%%%
Now suppose that $L$ is an odd unimodular lattice.
Then there are cosets $L_1,L_2,L_3$ of $L_0$ such that
$L_0^* = L_0 \cup L_1 \cup L_2 \cup L_3$, where
$L = L_0  \cup L_2$ and $S = L_1 \cup L_3$.
Two lattices are {\em neighbors} if
both lattices contain a sublattice of index $2$
in common.
If the dimension is divisible by $8$,
then $L$ has two even unimodular neighbors
of $L$, namely, $L_0 \cup L_1$ and $L_0 \cup L_3$.

% Let $L$ be an odd unimodular lattice and let $L_0$ denote its
% sublattice of vectors of even norms.
% Then $L_0$ is a sublattice of $L$ of index $2$.
% $L_0^* \setminus L$ is called the shadow $S$ of $L$~\cite{CS-odd}.
% There are cosets $L_1,L_2,L_3$ of $L_0$ such that
% $L_0^* = L_0 \cup L_1 \cup L_2 \cup L_3$, where
% $L = L_0  \cup L_2$ and $S = L_1 \cup L_3$.
% If $L$ is an odd unimodular lattice in dimension divisible by eight,
% then $L$ has two even unimodular neighbors
% of $L$, namely, $L_0 \cup L_1$ and $L_0 \cup L_3$.

%%%%
\subsection{Construction A and $k$-frames}

We give a method to construct 
unimodular lattices from self-dual $\ZZ_{k}$-codes, which 
is referred to as {\em Construction A} (see~\cite{{BDHO},{HMV}}). 
% Let $\rho$ be a map from $\ZZ_{k}$ to $\ZZ$ sending $0, 1, \ldots , k-1$ 
% to $0, 1, \ldots , k-1$, respectively.
If $C$ is a  self-dual $\ZZ_{k}$-code of length $n$, then 
the following lattice 
\[
A_{k}(C)=
% \frac{1}{\sqrt{k}}\{\rho (C) +k \ZZ^{n}\} 
\frac{1}{\sqrt{k}}\{(x_1,\ldots,x_n) \in \ZZ^n \mid
(x_1 \bmod k,\ldots,x_n \bmod k)\in C\}
\]
is a unimodular lattice in dimension $n$.
% , where 
% $
% \rho (C)=\{(\rho (c_{1}), \ldots , \rho (c_{n})) 
% \mid (c_{1}, \ldots , c_{n}) \in C\}. 
% $
The minimum norm of $A_{k}(C)$ is $\min\{k, d_{E}(C)/k\}$.
Moreover, $C$ is a Type~II $\ZZ_{2k}$-code if and only if
$A_{2k}(C)$ is an even unimodular lattice~\cite{BDHO}.

%%%%%
A set $\{f_1, \ldots, f_{n}\}$ of $n$ vectors $f_1, \ldots, f_{n}$ of 
a unimodular lattice $L$ in dimension $n$ with
$ ( f_i, f_j ) = k \delta_{i,j}$
is called a {\em $k$-frame} of $L$,
where $\delta_{i,j}$ is the Kronecker delta.
It is trivial that if 
a unimodular lattice in dimension $n$
% with theta series $\sum_{m=0}^{\infty}A_{m}q^m$
contains a $k$-frame then the number of vectors of norm $k$
is greater than or equal to $2n$.
% $A_k \ge 2n$.
It is known that a unimodular lattice $L$ contains a $k$-frame 
if and only if there exists a self-dual $\ZZ_{k}$-code $C$ with 
$A_{k}(C) \cong L$ (see~\cite{HMV}).
%% In addition, an even unimodular lattice $L$ contains a $2k$-frame 
%% if and only if there exists a Type~II $\ZZ_{2k}$-code $C$ 
%% with $A_{2k}(C) \cong L$ (see~\cite{{Chapman},{HMV}}). 
Therefore, we have the following:

\begin{lem}\label{lem:LtoC}
Suppose that there is a unimodular lattice $L$ in dimension
$n$ containing a $k$-frame.
Then there is a self-dual $\ZZ_k$-code $C$ such that
$d_E(C)$ is greater than or equal to $k \min(L)$.
\end{lem}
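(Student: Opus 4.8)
The statement to prove is Lemma~\ref{lem:LtoC}: if a unimodular lattice $L$ in dimension $n$ contains a $k$-frame, then there is a self-dual $\ZZ_k$-code $C$ with $d_E(C) \ge k \min(L)$. The natural approach is to invoke the correspondence between $k$-frames and Construction~A that is already recalled in the excerpt, and then track what the frame condition forces about the minimum Euclidean weight of the associated code. First I would quote the known equivalence: a unimodular lattice $L$ contains a $k$-frame if and only if there exists a self-dual $\ZZ_k$-code $C$ with $A_k(C) \cong L$ (cited to~\cite{HMV}). So from the hypothesis we immediately obtain a self-dual $\ZZ_k$-code $C$ with $A_k(C)\cong L$; in particular $\min(A_k(C)) = \min(L)$.

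Next I would use the formula for the minimum norm of a Construction~A lattice, also stated in the excerpt: $\min(A_k(C)) = \min\{k,\, d_E(C)/k\}$. Combining this with $\min(A_k(C)) = \min(L)$ gives $\min\{k,\, d_E(C)/k\} = \min(L)$. This forces $d_E(C)/k \ge \min(L)$, i.e.\ $d_E(C) \ge k\min(L)$, which is exactly the desired inequality. (Note we do not need equality $d_E(C) = k\min(L)$; the one-sided bound drops out of the $\min$ automatically, even in the case $\min(L) = k$ where $d_E(C)$ could be strictly larger.)

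Strictly speaking there is one harmless edge case: if $L$ is the standard lattice $\ZZ^n$, then $\min(L) = 1$ and $\min\{k, d_E(C)/k\} = 1$ could in principle be realized by the term $k$ only if $k = 1$, which is excluded since $k \ge 2$; so the equality $\min\{k,d_E(C)/k\} = \min(L)$ still yields $d_E(C)/k \ge \min(L)$ as long as $\min(L) \le k$, and if $\min(L) > k$ the hypothesis would already be vacuous. In all cases the inequality follows. So the proof is essentially a two-line deduction from the two previously cited facts; there is no real obstacle, and the only thing to be careful about is citing the $k$-frame/Construction~A correspondence and the min-norm formula correctly rather than reproving them.

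In short, the proof writes itself: extract $C$ from the frame via the Construction~A correspondence, apply $\min(A_k(C)) = \min\{k, d_E(C)/k\}$, and read off $d_E(C) \ge k\min(L)$ from the equality $\min\{k, d_E(C)/k\} = \min(A_k(C)) = \min(L)$.
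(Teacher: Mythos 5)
Your proof is correct and is exactly the paper's (implicit) argument: the paper states the frame/Construction~A equivalence and the formula $\min(A_k(C))=\min\{k,d_E(C)/k\}$ immediately before the lemma and derives it with a "Therefore," which is precisely your two-line deduction. The edge-case discussion at the end is unnecessary (the inequality $d_E(C)/k\ge\min\{k,d_E(C)/k\}=\min(L)$ holds unconditionally), but harmless.
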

% \begin{proof}
% Suppose that $d_E(C) < k \min(L)$.
% Then $d_E(C)/k < \min(L) \le k$.
% Hence, we have
% \[
% \min(L)=\min\{k,d_E(C)/k\} =d_E(C)/k < \min(L),
% \]
% which is a contradiction.
% \end{proof}

The above lemma is useful when establishing the existence of
extremal Type~I $\ZZ_k$-codes in Section~\ref{sec:main}.

%% The following lemma is useful to provide the
%% existence of $k$-frames.
%% More precisely, it is enough to consider 
%% a $p$-frame of an odd  unimodular lattice
%% for each prime $p$.
By the following lemma, it is sufficient to consider 
the existence of a $p$-frame in a unimodular lattice
for each prime $p$.
The lemma also gives a reason why we consider 
unimodular lattices only in dimension $n$ divisible by $4$.

%\begin{lem}[{Chapman~\cite[Lemma~5.1]{Chapman}}]
\begin{lem}[{\cite[Lemma~5.1]{Chapman}}]\label{lem:Chapman}
\label{lem:frame}
Let $n$ be a positive integer divisible by $4$.
If a lattice $L$ in dimension $n$ contains a $k$-frame, then
$L$ contains a $km$-frame for every positive integer $m$.
\end{lem}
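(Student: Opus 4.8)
The plan is to first reduce the statement to the case where $m$ is a prime, since if $m = p_1 p_2 \cdots p_r$ is a factorization into (not necessarily distinct) primes, then repeated application of the prime case produces in turn a $kp_1$-frame, a $kp_1p_2$-frame, and finally a $km$-frame. So it suffices to show: if $L$ in dimension $n$ (with $4 \mid n$) contains a $k$-frame, then it contains a $kp$-frame for every prime $p$. Fix a $k$-frame $\{f_1,\dots,f_n\}$ of $L$; these span a sublattice isometric to $\sqrt{k}\,\ZZ^n$, and the whole problem now takes place inside the Euclidean space $\RR^n = \bigoplus_i \RR f_i$ with the rescaled coordinates. What we need is $n$ pairwise orthogonal vectors of norm $kp$ lying in $L$; equivalently, writing vectors in the $\{f_i/\sqrt{k}\}$-coordinates so that $L$-vectors have integer coordinates summing appropriately, we need an $n \times n$ integer matrix $M$ with $M M^{T} = p\,I_n$ whose rows, read back as elements of $\RR^n$, all lie in $L$.

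The key step is the existence of an integer matrix $M$ with $MM^T = pI_n$ when $4 \mid n$. This is classical: every positive integer is a sum of four squares, so there is an integer matrix $B$ with $BB^T = pI_4$ — concretely one may take the matrix built from the quaternion $q = a + bi + cj + d\ell$ with $a^2+b^2+c^2+d^2 = p$, namely
\[
B = \begin{pmatrix} a & b & c & d \\ -b & a & -d & c \\ -c & d & a & -b \\ -d & -c & b & a \end{pmatrix},
\qquad BB^T = pI_4 .
\]
Since $4 \mid n$, the block-diagonal matrix $M = \operatorname{diag}(B,B,\dots,B)$ ($n/4$ blocks) satisfies $MM^T = pI_n$ and has integer entries. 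This is exactly the point at which the hypothesis $4 \mid n$ is used, and it is the only place: for $n$ not divisible by $4$ one cannot in general find an integer $M$ with $MM^T = pI_n$ (for instance $n=1,2,3$ fail for $p=2$), which is the promised "reason why we consider unimodular lattices only in dimension $n$ divisible by $4$."

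It remains to check that the new frame actually sits inside $L$, not merely inside $\RR^n$. Let $g_i = \sum_{j} M_{ij} f_j$ for $i = 1,\dots,n$. Each $g_i$ is an integer combination of the $f_j \in L$, hence $g_i \in L$; and $(g_i,g_j) = \sum_{a,b} M_{ia}M_{jb}(f_a,f_b) = k\sum_a M_{ia}M_{ja} = k(MM^T)_{ij} = kp\,\delta_{ij}$, so $\{g_1,\dots,g_n\}$ is a $kp$-frame of $L$. I expect no real obstacle here — the argument is essentially a one-line change of basis once the matrix $M$ is in hand — so the only substantive content is the sum-of-four-squares construction of $M$, and that in turn is where divisibility by $4$ enters. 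One could alternatively phrase the whole thing code-theoretically via Construction~A and the correspondence between $k$-frames and self-dual $\ZZ_k$-codes recalled before Lemma~\ref{lem:LtoC}, but the direct lattice argument above is cleaner and self-contained.
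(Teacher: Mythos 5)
Your proof is correct: the quaternion matrix $B$ built from a four-square representation does satisfy $BB^T=pI_4$, the block-diagonal $M$ gives $MM^T=pI_n$ using $4\mid n$, and the vectors $g_i=\sum_j M_{ij}f_j$ lie in $L$ with $(g_i,g_j)=kp\,\delta_{ij}$ (in fact Lagrange's theorem lets you handle $m$ directly, so the reduction to primes is unnecessary). The paper itself gives no proof but cites Chapman's Lemma~5.1, whose argument is exactly this sum-of-four-squares construction, so you have reproduced essentially the same approach.
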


%%%%
\subsection{Upper bounds on the minimum Euclidean weights}

It is known~\cite{MPS,Rains,RS-bound} that
a self-dual $\ZZ_k$-code $C$ of length $n$ satisfies the
following bound:
\begin{equation}\label{eq:kbound}
d_E(C) \le
\begin{cases}
4 \lfloor \frac{n}{24} \rfloor+4 & \text{ if } 
                      k=2, n \not\equiv 22 \pmod{24}, \\
4 \lfloor \frac{n}{24} \rfloor+6 & \text{ if } 
                      k=2, n \equiv 22 \pmod{24}, \\
%%%%%
3 \lfloor \frac{n}{12} \rfloor +3 & \text{ if } k=3, \\
%%%%%
8 \lfloor \frac{n}{24} \rfloor+8 & \text{ if } k=4, 
                                  n \not\equiv 23 \pmod{24}, \\
8 \lfloor \frac{n}{24} \rfloor+12 & \text{ if } k=4, 
                                  n \equiv 23 \pmod{24}.
\end{cases}
\end{equation}
Note that 
a binary self-dual code of length divisible by $24$ meeting the bound
with equality must be Type~II~\cite{Rains}.

% A binary self-dual code $C$ of length $n$ satisfies the bound
% $d_E(C) \le 4 \lfloor \frac{n}{24} \rfloor+4$,
% except when $n \equiv 22 \pmod{24}$, when the bound is
% $d_E(C) \le 4 \lfloor \frac{n}{24} \rfloor+6$ \cite{Rains}.
% A binary self-dual code of length divisible by $24$ meeting the bound
% must be Type~II \cite{Rains}.
% A ternary self-dual code $C$ of length $n$ satisfies the bound
% $d_E(C) \le 3 \lfloor \frac{n}{12} \rfloor +3$ \cite{MPS}.
% A self-dual $\ZZ_4$-code $C$ of length $n$ satisfies the bound
% $d_E(C) \le 8 \lfloor \frac{n}{24} \rfloor+8$,
% except when $n \equiv 23 \pmod{24}$, when the bound is
% $d_E(C) \le 8 \lfloor \frac{n}{24} \rfloor+12
% %% $ \cite[Theorem 35]{RS-Handbook}.
% $ \cite{RS-bound}.

Although the following two lemmas are somewhat trivial, 
we give proofs for the sake of completeness.

\begin{lem}\label{lem:bound}
Let $C$ be a self-dual $\ZZ_k$-code of length $n$.
\begin{itemize}
\item[\rm (a)]
If $n \ne 23$ and 
$k \ge 2 \lfloor \frac{n}{24} \rfloor+3$,
then $d_E(C) \le 2k \lfloor \frac{n}{24} \rfloor+2k$.
\item[\rm (b)]
If $n = 23$ and $k \ge 4$, then $d_E(C) \le 3k$.
\end{itemize}
\end{lem}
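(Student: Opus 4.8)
The plan is to reduce the statement to the bound \eqref{eq:kbound} for small $k$ (namely $k=2$) together with the $k$-frame divisibility result of Lemma~\ref{lem:frame}, noting that $n$ is divisible by $4$ in all cases of interest. First I would observe that by the general unimodular-lattice bound cited after the definition of extremal lattices (from \cite{RS-bound}), the Construction~A lattice $A_k(C)$ has minimum norm at most $2\lfloor n/24\rfloor+2$ when $n\ne 23$. Since the minimum norm of $A_k(C)$ equals $\min\{k, d_E(C)/k\}$, and the hypothesis $k \ge 2\lfloor n/24\rfloor+3 > 2\lfloor n/24\rfloor+2$ forces the minimum to be achieved by the term $d_E(C)/k$ rather than by $k$, we immediately get $d_E(C)/k \le 2\lfloor n/24\rfloor+2$, i.e. $d_E(C) \le 2k\lfloor n/24\rfloor+2k$. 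This proves (a) directly, with no case analysis on $k$ and no appeal to \eqref{eq:kbound} at all.

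For part (b), the same argument applies: when $n=23$ the cited bound gives $\min(A_k(C)) \le 3$, and since $k \ge 4 > 3$ the minimum norm of $A_k(C)$ must come from $d_E(C)/k$, so $d_E(C)/k \le 3$, i.e. $d_E(C) \le 3k$. The hypotheses $k \ge 2\lfloor n/24\rfloor+3$ in (a) and $k \ge 4$ in (b) are exactly what is needed so that $k$ strictly exceeds the extremal minimum norm, guaranteeing that the $d_E(C)/k$ term is the binding one in $\min\{k, d_E(C)/k\}$.

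The only mild subtlety — and the step I'd want to state carefully — is that $d_E(C)/k$ need not a priori be an integer, so "the minimum norm is $\min\{k,d_E(C)/k\}$" should be read as a statement about real numbers; but this causes no trouble, since we only use the inequality $\min\{k,d_E(C)/k\} \le 2\lfloor n/24\rfloor+2$ and the fact that $k$ is not the smaller of the two. One should also note that $d_E(C)>0$ (the code has a nonzero codeword, e.g. the all-$1$s vector is self-orthogonal in many cases, but in any event $C\ne\{0\}$ since $|C|=k^{n/2}>1$), so $A_k(C)$ genuinely has a positive minimum norm to which the bound applies. I do not expect any real obstacle here; this is essentially a bookkeeping lemma translating the lattice bound through Construction~A, which is why the authors flag it as "somewhat trivial."
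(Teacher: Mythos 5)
Your proof is correct and follows essentially the same route as the paper: both rest on the formula $\min(A_k(C))=\min\{k,\,d_E(C)/k\}$ together with the Rains--Sloane bound $\min(L)\le 2\lfloor n/24\rfloor+2$ (resp.\ $3$ for $n=23$), the only cosmetic difference being that the paper argues by contradiction, using divisibility of Euclidean weights by $k$ to pass from ``$>2k\lfloor n/24\rfloor+2k$'' to ``$\ge 2k\lfloor n/24\rfloor+3k$'', whereas you read off $d_E(C)/k\le 2\lfloor n/24\rfloor+2$ directly from $k>2\lfloor n/24\rfloor+2$. Your opening sentence about reducing to the bound \eqref{eq:kbound} and Lemma~\ref{lem:frame} plays no role in the argument you actually give and should be dropped.
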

\begin{proof}
Since both cases are similar,  we only give a proof of (a).
% consider the case that $n \ne 23$ and 
% $k \ge 2 \lfloor \frac{n}{24} \rfloor+3$.
Note that the Euclidean weight of a codeword of $C$ is
divisible by $k$.
Suppose that 
$d_E(C) \ge 2k \lfloor \frac{n}{24} \rfloor+3k$.
Since $\min(A_{k}(C))=\min\{k, d_{E}(C)/k\}$,
$\min(A_{k}(C)) \ge  2 \lfloor \frac{n}{24} \rfloor+3$,
which is a contradiction to the upper bound
on the minimum norms of unimodular lattices.
%% Consider the case that $n=23$ and $k \ge 4$.
%% Suppose that $d_E(C) \ge 4k$.
%% Then $\min(A_{k}(C)) \ge \min\{k,4\} \ge 4$,
%% which is a contradiction.
\end{proof}

\begin{lem}\label{lem:bound2}
If $C$ is a self-dual $\ZZ_k$-code of length $48$,
then $d_E(C) \le 6k$.
\end{lem}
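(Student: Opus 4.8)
The plan is to work with the unimodular lattice $A_k(C)$ obtained from a self-dual $\ZZ_k$-code $C$ of length $48$ by Construction~A. Since $A_k(C)$ is unimodular of dimension $48$, we have $\min(A_k(C))\le 2\lfloor 48/24\rfloor+2=6$ by~\cite{RS-bound}; moreover $\min(A_k(C))=\min\{k, d_E(C)/k\}$ and $d_E(C)$ is divisible by $k$. For $k\ge 7$ this already gives the claim: if $d_E(C)>6k$ then $d_E(C)\ge 7k$, whence $\min(A_k(C))=\min\{k, d_E(C)/k\}\ge 7>6$, a contradiction (this is the case $n=48$ of Lemma~\ref{lem:bound}(a)). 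For $k\in\{2,3,4\}$ the inequality $d_E(C)\le 6k$ is immediate from~\eqref{eq:kbound}, which yields $d_E(C)\le 12=6k$, $d_E(C)\le 15<18=6k$ and $d_E(C)\le 24=6k$, respectively. So it remains to treat $k=5$ and $k=6$.

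For $k=6$ I would distinguish two cases according to whether $A_6(C)$ is even. If $A_6(C)$ is odd, then it is not extremal, since an extremal unimodular lattice of dimension $48$ must be even~\cite{Gaulter}; hence $\min(A_6(C))\le 5$, and because $6>5$ this forces $d_E(C)/6\le 5$, i.e.\ $d_E(C)\le 30<36=6k$. If $A_6(C)$ is even, then $C$ is a Type~II $\ZZ_6$-code, and the bound on the minimum Euclidean weight of Type~II $\ZZ_{2k}$-codes from~\cite{HM12} gives $d_E(C)\le 36=6k$.

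The case $k=5$ is the crux, and it is exactly where the min-norm argument used above breaks down: $A_5(C)$ always contains the norm-$5$ vector $\sqrt{5}\,e_1$, so it is an \emph{odd} unimodular lattice, and~\cite{Gaulter} only yields $\min(A_5(C))\le 5$, which is automatically consistent with $\min\{5, d_E(C)/5\}$ and gives nothing. Instead I would argue by contradiction. Suppose $d_E(C)>30$; as $d_E(C)$ is a multiple of $5$ we then have $d_E(C)\ge 35$, so $\min(A_5(C))=5$. Now $A_5(C)$ contains the sublattice $\sqrt{5}\,\ZZ^{48}$ spanned by the $5$-frame $\{\sqrt{5}\,e_1,\dots,\sqrt{5}\,e_{48}\}$; its nonzero vectors have norm in $5\ZZ$, so among them only the $96$ vectors $\pm\sqrt{5}\,e_i$ have norm at most $6$, while every vector of $A_5(C)$ outside $\sqrt{5}\,\ZZ^{48}$ arises from a nonzero codeword and hence has norm at least $d_E(C)/5\ge 7$. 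Consequently the theta series of $A_5(C)$ is
\[
\theta_{A_5(C)}(q)=1+96q^5+0\cdot q^6+\cdots.
\]
The remaining task is then to prove that no $48$-dimensional unimodular lattice has this theta series, and this is the step I expect to be the main obstacle. It should be settled by a finite computation in the (finite-dimensional) space of theta series of $48$-dimensional unimodular lattices, checking that the prescribed coefficients $1,0,0,0,0,96,0$ at $q^0,\dots,q^6$ force a later coefficient to be negative; alternatively one may observe that such a lattice $L:=A_5(C)$ would have an even sublattice $L_0$ of index $2$ --- an even lattice of dimension $48$ and determinant $4$ with no vectors of norm $2$, $4$ or $6$, hence with $\min(L_0)\ge 8$ --- which is excluded by the known upper bounds for lattice packings in dimension $48$. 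Either route closes the last case, and then $d_E(C)\le 6k$ holds for every $k\ge 2$.
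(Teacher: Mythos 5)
Your reduction is the same as the paper's: $k\le 4$ via the bound (\ref{eq:kbound}) and $k\ge 7$ via Lemma~\ref{lem:bound}(a), leaving $k=5,6$. For $k=6$ your even/odd split (Type~II bound of \cite{HM12} when $A_6(C)$ is even, \cite{Gaulter} forcing $\min(A_6(C))\le 5$ when it is odd) is a correct alternative; the paper instead treats $k=5$ and $k=6$ uniformly by observing that $d_E(C)\ge 7k$ would make $A_k(C)$ a unimodular lattice of minimum norm $k$ with kissing number $96$, whereas an extremal even unimodular lattice in dimension $48$ has kissing number $52416000$ \cite[Chap.~7, (68)]{SPLAG} and an optimal odd one (minimum norm $5$) has kissing number $385024$ or $393216$ by \cite{HKMV}.

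The genuine gap is exactly where you flag it: the case $k=5$. You correctly reduce it to showing that no $48$-dimensional unimodular lattice has theta series $1+96q^5+0\cdot q^6+\cdots$, but you do not prove this; you only sketch two unexecuted routes. The paper closes this case by citing the result of \cite{HKMV} that the kissing number of a minimum-norm-$5$ unimodular lattice in dimension $48$ is $385024$ or $393216$ --- which is precisely the shadow/theta-series computation your first route gestures at, so that route amounts to redoing \cite{HKMV} and, as written, is an unverified assertion. Your second route can in fact be completed and is genuinely more elementary, but you must name and apply a concrete bound rather than appeal to ``known upper bounds for lattice packings in dimension 48'': the even sublattice $L_0$ of $L=A_5(C)$ would be an even lattice of dimension $48$, determinant $4$ and minimum norm $\ge 8$ (your no-norm-$6$ argument is fine), so its Hermite invariant would be $8\cdot 4^{-1/48}\approx 7.77$, exceeding Blichfeldt's classical bound $\gamma_{48}\le \frac{2}{\pi}\Gamma(26)^{1/24}\approx 7.14$ (equivalently, its packing density would exceed $(\tfrac{n}{2}+1)2^{-n/2}$ with $n=48$), a contradiction. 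With either the citation to \cite{HKMV} or this explicit packing-bound computation supplied, your argument becomes a complete proof; without one of them the crucial case is open.
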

\begin{proof}
By the bound (\ref{eq:kbound}) and Lemma~\ref{lem:bound},
it is sufficient to consider the cases only for $k=5,6$.
Assume that $k=5,6$ and $d_E(C) \ge 7k$.
Since $k < d_E(C)/k$, 
$\min(A_k(C))= k$ and the kissing number of $A_k(C)$ is $96$.
Note that unimodular lattices $L$ with $\min(L)=6$ and $5$ are
extremal even unimodular lattices and optimal odd 
unimodular lattices, respectively.
However, 
the kissing numbers of such lattices
are $52416000$ (see~\cite[Chap.~7, (68)]{SPLAG}) and 
$385024$ or $393216$
\cite{HKMV}, respectively.
This is a contradiction.
\end{proof}

% Let $C$ be a self-dual $\ZZ_{k}$-code of length $n$.
% If $n \le 47$ 
% then we have the following bound
% \[
% d_E(C) \le 2k \left\lfloor \frac{n}{24} \right\rfloor +2k,
% \]
% except when $n =23$ and $k \ge 4$, when the bound is $d_E(C) \le 3k$,
% except when $(n,k) =(22,2)$, when the bound is $d_E(C) \le 6$
% and 
% except when $(47,4)$, when the bound is $d_E(C) \le 20$.
% We say that a self-dual $\ZZ_{k}$-code meeting the bound 
% with equality is {\em extremal} for length $n \le 47$.

By the bound (\ref{eq:kbound}) along with
Lemmas~\ref{lem:bound} and~\ref{lem:bound2},
a self-dual $\ZZ_{k}$-code $C$  of length $n \le 48$
satisfies the following bound:
\begin{equation}\label{eq:nbound}
d_E(C) \le 
\begin{cases}
3k & \text{if $n =23$ and $k \ge 4$,} \\
4 \lfloor \frac{n}{24} \rfloor+6
  & \text{if $n =22,46$ and $k=2$,} \\  % (n,k) =(22,2) \\ 
20 & \text{if $n =47$ and $k=4$,} \\  % (n,k) =(47,4) \\
2k \left\lfloor \frac{n}{24} \right\rfloor +2k & \text{otherwise.}
\end{cases}
\end{equation}
We say that a self-dual $\ZZ_{k}$-code meeting the bound (\ref{eq:nbound})
with equality is 
%%%%%
{\em extremal}\footnote{For $k=3$, a self-dual code meeting
the bound (\ref{eq:kbound}) is usually called extremal.
However, we here adopt this definition
since we simultaneously consider the existence of extremal
self-dual $\ZZ_k$-codes for all integers $k$
with $k \ge 2$.
}
%%%%%
for length $n \le 48$.
%%%%%%%%%%%%
% For the length for which there is no extremal self-dual
% $\ZZ_k$-code, we say that a self-dual code $C$ is 
% {\em near-extremal} if $d_E(C)+k$ meets the bound.
We say that a self-dual $\ZZ_k$-code $C$ is 
{\em near-extremal} if $d_E(C)+k$ meets the bound (\ref{eq:nbound}).
We only consider near-extremal self-dual $\ZZ_k$-codes
when there is no extremal self-dual $\ZZ_k$-code of that length.

The following lemma shows that an extremal self-dual
$\ZZ_{k}$-code of lengths $24$ and $48$ must be Type~II
for every even positive integer $k$.

\begin{lem}
Let $C$ be a Type~I $\ZZ_k$-code of length $n$.
\begin{itemize}
\item[\rm (a)]
If $n=24$, then $d_E(C) \le 3 k$.
\item[\rm (b)]
If $n=48$, then $d_E(C) \le 5 k$.
\end{itemize}
\end{lem}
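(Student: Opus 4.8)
The plan is to argue by contradiction, pushing $C$ through Construction~A to the unimodular lattice $A_k(C)$ and then invoking: the bounds (\ref{eq:kbound}) and (\ref{eq:nbound}) on $d_E(C)$ together with the formula $\min(A_k(C))=\min\{k,d_E(C)/k\}$ and the fact that every Euclidean weight of $C$ is divisible by $k$; the fact that an extremal unimodular lattice in a dimension divisible by $24$ must be even; and the fact that, for even $k$, the lattice $A_k(C)$ is even if and only if $C$ is Type~II.

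For (a), suppose $n=24$ and $C$ is Type~I with $d_E(C)>3k$. Since $d_E(C)$ is a multiple of $k$ and at most $4k$ by (\ref{eq:nbound}), it equals $4k$, so $\min(A_k(C))=\min\{k,4\}$. If $k=3$ this already contradicts (\ref{eq:kbound}), because $12=4k>9=3\lfloor 24/12\rfloor+3$. If $k=2$, then $d_E(C)$ is the Hamming distance $d(C)=8$, so $C$ would be a binary self-dual code of length divisible by $24$ meeting the bound (\ref{eq:kbound}), hence Type~II --- a contradiction. If $k\ge4$, then $\min(A_k(C))=4=2\lfloor 24/24\rfloor+2$, so $A_k(C)$ is an extremal unimodular lattice in dimension $24$ and therefore even; but for odd $k$ the lattice $A_k(C)$ contains the vector $(\sqrt k,0,\dots,0)$ of odd norm $k$, and for even $k$ an even $A_k(C)$ forces $C$ to be Type~II --- in either case a contradiction.

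For (b) the scheme is identical: assuming $n=48$, $C$ Type~I, $d_E(C)>5k$ yields $d_E(C)=6k$ and $\min(A_k(C))=\min\{k,6\}$; the cases $k=2$ (a binary self-dual $[48,24,12]$ code is Type~II) and $k=3$ ($18=6k>15=3\lfloor48/12\rfloor+3$) go exactly as above; and $k\ge6$ makes $A_k(C)$ an extremal, hence even, unimodular lattice in dimension $48$, which is impossible for odd $k$ and forces $C$ Type~II for even $k$. The extra case is $k=5$: here $\min(A_5(C))=\min\{5,6\}=5$, and since $d_E(C)=30>25$ any vector of norm $5$ in $A_5(C)$ must reduce to the zero codeword, so $A_5(C)$ has exactly the $96$ vectors of norm $5$ with a single nonzero coordinate $\pm\sqrt5$. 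Being odd, $A_5(C)$ would then be an optimal odd unimodular lattice in dimension $48$ with kissing number $96$, contradicting that such lattices have kissing number $385024$ or $393216$ --- the same argument as in the proof of Lemma~\ref{lem:bound2}.

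The step I expect to be the real obstacle is the remaining case $k=4$, $n=48$: ruling out $d_E(C)=24$ for a Type~I self-dual $\ZZ_4$-code $C$ of length $48$. Here $\min(A_4(C))=\min\{4,6\}=4$, which is \emph{not} the extremal value $6$ in dimension $48$, so the ``extremal implies even'' shortcut is unavailable; in fact $A_4(C)$ has minimum norm $4$ and exactly $96$ minimal vectors whether $d_E(C)$ equals $20$ or $24$, so no invariant of the Construction~A lattice alone distinguishes the two. The plan for this case is to work with the binary residue code $C_1=C\bmod 2$ and the torsion code $C_1^\perp$ of $C$: one has $\wt(u)\equiv 0\pmod4$ for every $u\in C_1$ (since $\wt(u)\equiv d_E(c)\equiv 0\pmod4$ for any $c\in C$ reducing to $u$), so $C_1$ is a doubly even self-orthogonal binary code, and the codewords $2w$ of $C$ with $w\in C_1^\perp$ have Euclidean weight $4\wt(w)$, forcing $d(C_1^\perp)\ge6$. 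Since $C$ is Type~I it contains a codeword of Euclidean weight $\equiv4\pmod8$, and the remaining work is to play the $\{0,2\}$-pattern of such a codeword against the fibres of the reduction $C\to C_1$ to produce a codeword of $C_1^\perp$ of weight less than $6$, contradicting $d(C_1^\perp)\ge6$; alternatively one may invoke the $\ZZ_4$-analogue of the fact, used above for $k=2$, that a self-dual code of length divisible by $24$ attaining its Euclidean-weight bound must be Type~II. Completing this $\ZZ_4$-specific analysis is the only part of the argument that does not reduce directly to the extremality of unimodular lattices.
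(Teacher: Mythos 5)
Your treatment of part (a) and of the cases $k=2,3,5$ and $k\ge 6$ of part (b) is correct and essentially the paper's argument (extremality forces evenness via~\cite{Gaulter}, the $k=5$ kissing-number contradiction is exactly the one in Lemma~\ref{lem:bound2}, and the $k=2$ case follows from Rains's result quoted after the bound (\ref{eq:kbound})). The genuine gap is the case $k=4$, $n=48$, which you yourself flag as unfinished: the residue/torsion-code plan (doubly even $C_1$, $d(C_1^\perp)\ge 6$, then ``playing the $\{0,2\}$-pattern against the fibres'') is only a sketch with no actual derivation of a contradiction, and the alternative you mention --- a ``$\ZZ_4$-analogue'' of the statement that a self-dual code of length divisible by $24$ meeting its Euclidean-weight bound must be Type~II --- is neither proved nor available as a citation in the paper. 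As it stands, $d_E(C)=24$ for a Type~I $\ZZ_4$-code of length $48$ has not been excluded, so the lemma is not proved.

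Moreover, the premise that led you away from the lattice side is wrong: it is not true that ``no invariant of the Construction~A lattice alone distinguishes'' $d_E(C)=20$ from $d_E(C)=24$. Since every vector of $A_4(C)$ lying over a nonzero codeword $c$ has norm at least $\wt_E(c)/4$, the hypothesis $d_E(C)\ge 24$ forces $A_4(C)$ to have minimum norm $4$, kissing number $96$, \emph{and no vectors of norm $5$} --- and this last condition is precisely the extra lattice invariant the paper exploits. Using the shadow identities (2) and (3) of~\cite{CS-odd}, these constraints pin down the possible theta series of $A_4(C)$ and of its shadow $S$ up to one parameter $\alpha$, and the coefficients of $q^2$ and $q^4$ in $\theta_S$ force $\alpha=1$, i.e.\ the shadow coincides with the lattice's behaviour at an even lattice, so $A_4(C)$ is even and $C$ is Type~II, a contradiction. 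So the case you isolated as needing a code-theoretic detour is in fact settled by a theta-series/shadow computation on $A_4(C)$; to repair your proof you should either carry out that computation or genuinely complete the $\ZZ_4$ residue-code argument you outlined.
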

\begin{proof}
We give a proof of (b).
% By the bounds in \cite{MPS, Rains},
By the bound (\ref{eq:kbound}),
it is sufficient to consider only $k \ge 4$.
Assume that $d_E(C) \ge 6 k$.
If $k \ge 6$, then $A_k(C)$ has minimum norm $6$
from the upper bound
on the minimum norms of unimodular lattices.
In addition, $A_k(C)$ must be even~\cite{Gaulter}, 
that is, $C$ is Type~II.
% We remark that an odd unimodular lattice $L$
% in dimension $48$ satisfies $\min(L) \le 5$ \cite{Gaulter}.
If $k=5$, then $A_5(C)$ is an optimal odd unimodular lattice with
kissing number $96$,
%%%%%%%%%%%
which contradicts that
the kissing number is $385024$ or $393216$~\cite{HKMV}.
%%%%%%%%%%%
Finally, suppose that $k=4$.
Since $d_E(C) \ge 24$, 
$A_4(C)$ satisfies the condition that
$\min(A_4(C))=4$, the kissing number is $96$ and there is no
vector of norm $5$.
% Conway and Sloane \cite{CS-odd} show that
% when the theta series of an odd unimodular lattice $L$
% is written as
% $
% \theta_L(q)=
%  \sum_{j =0}^{\lfloor n/8\rfloor} a_j\theta_3(q)^{n-8j}\Delta_8(q)^j,
% $
% the theta series of the shadow $S$
% can be written as
% $
% \theta_S(q)= \sum_{j=0}^{\lfloor n/8\rfloor}
% \frac{(-1)^j}{16^j} a_j\theta_2(q)^{n-8j}\theta_4(q^2)^{8j},
% $
% where
% $\Delta_8(q) = q \prod_{m=1}^{\infty} (1 - q^{2m-1})^8(1-q^{4m})^8$,
% and $\theta_2(q), \theta_3(q)$ and $\theta_4(q)$ are the Jacobi
% theta series \cite{SPLAG}.
By~\cite[(2) and (3)]{CS-odd}, one can determine the 
possible theta series of 
$A_4(C)$ and its shadow $S$ as follows:
\[
\begin{cases}
\theta_{A_4(C)}(q) &=
1 + 96 q^4 + (35634176 + 16777216 \alpha)q^6 
% +(805306368 -805306368 \alpha)q^7 
+ \cdots, \\
\theta_{S}(q) &=
\alpha + (96 - 96 \alpha )q^2 +(- 4416 + 4512 \alpha) q^4 + \cdots,
\end{cases}
\]
respectively, where $\alpha$ is an integer.
From the coefficients of $q^2$ and $q^4$ in $\theta_{S}(q)$,
% the theta series of the shadow 
it follows that $\alpha=1$.
Hence, 
% since $S$ contains the zero-vector,
$A_4(C)$ must be  even, that is, $C$ is Type~II\@.

The proof of (a) is similar to that of (b), and
it can be completed more easily.
So the proof is omitted.
\end{proof}

% \bigskip
% \noindent
% {\bf This can be applied to lengths up to 71, if we can
% determine the possible theta series of unimodular lattices 
% with minimum norm 5,6.
% }
% \bigskip

\begin{rem}
% As we mentioned in Section~\ref{Sec:1}, 
The odd Leech lattice contains a $k$-frame for 
every integer $k$ with $k \ge 3$~\cite{Miezaki}.
The binary odd Golay code is a near-extremal Type~I code of length $24$.
Hence, by Lemma~\ref{lem:LtoC},
there is a near-extremal Type~I $\ZZ_k$-code of length $24$
if and only if $k$ is an integer with $k \ge 2$.
\end{rem}

%%%%
\subsection{Negacirculant matrices}
\label{subsec:M}

Throughout this paper, let $A^T$ denote the transpose of a matrix $A$ and 
let $I_k$ denote the identity matrix of order $k$.
An  $n \times n$ matrix is
{\em circulant} and {\em negacirculant} if
it has the following form:
\[
\left( \begin{array}{ccccc}
r_0     &r_1     & \cdots &r_{n-2}&r_{n-1} \\
cr_{n-1}&r_0     & \cdots &r_{n-3}&r_{n-2} \\
cr_{n-2}&cr_{n-1}& \ddots &r_{n-4}&r_{n-3} \\
\vdots  & \vdots &\ddots& \ddots & \vdots \\
cr_1    &cr_2    & \cdots&cr_{n-1}&r_0
\end{array}
\right),
\]
where $c=1$ and $-1$, respectively.
% An  $n \times n$ matrix is {\em negacirculant} if
% it has the following form:
% \[
% \left( \begin{array}{ccccc}
% r_0     &r_1     & \cdots &r_{n-1} \\
% -r_{n-1}&r_0     & \cdots &r_{n-2} \\
% -r_{n-2}&-r_{n-1}& \cdots &r_{n-3} \\
% \vdots  & \vdots && \vdots\\
% -r_1    &-r_2    & \cdots&r_0
% \end{array}
% \right).
% \]
Most of matrices constructed in this paper 
are based on negacirculant matrices.
In Section~\ref{sec:main}, 
in order to construct self-dual $\ZZ_k$-codes of length $4n$,
we consider generator 
matrices of the following form:
\begin{equation} \label{eq:GM}
\left(
\begin{array}{ccc@{}c}
\quad & {\Large I_{2n}} & \quad &
\begin{array}{cc}
A & B \\
-B^T & A^T
\end{array}
\end{array}
\right),
\end{equation}
where $A$ and $B$ are $n \times n$ negacirculant matrices.
It is easy to see that the code is self-dual if
$AA^T+BB^T=-I_n$.

% \bigskip\noindent
% {\bf Will we comment on a subgroup of the automorphism
% group?}
% \bigskip

In Section~\ref{sec:frame}, 
in order to find $k$-frames in some unimodular lattices, 
we need to construct matrices $M$ satisfying 
the condition (\ref{eq:condition}) 
in Proposition~\ref{prop:const}.
%Suppose that $p$ is a prime, which is congruent to $3 \pmod 4$. 
Suppose that $p$ is a prime, which is congruent to $3$ modulo $4$. 
Let $Q_{p}=(q_{ij})$ be a $p \times p$ matrix over $\ZZ$, where
$q_{ij}=0$ if $i=j$, 
$-1$ if $j-i$ is a nonzero square modulo $p$, and 
%$-1$ if $j-i$ is a nonzero square $\pmod p$, and 
$1$ otherwise.
We consider the following matrix:
\[
P_{p+1} = 
\left(\begin{array}{ccccccc}
 0       & 1  & \cdots      &1  \\
 -1      & {} & {}          &{} \\
 \vdots  & {} &  Q_{p}  &{} \\
 -1      & {} & {}          &{} \\
\end{array}\right).
\]
Then it is well known that 
$P_{p+1}P_{p+1}^T=pI_{p+1}$,
$P_{p+1}^T=-P_{p+1}$, and 
$P_{p+1}+I_{p+1}$ is a Hadamard matrix of order $p+1$.
Hence, these matrices $P_{p+1}$ satisfy (\ref{eq:condition}).
In Section~\ref{sec:frame}, we 
construct more $2m\times 2m$ matrices $M$ satisfying (\ref{eq:condition})
using the following form:
%\begin{equation} \label{eq:M}
%\left(
%\begin{array}{cc}
%A & B \\
%-B^T & A^T
%\end{array}
%\right),
%\end{equation}
%where $A$ and $B$ are $n \times n$ negacirculant matrices with
%$A=-A^T$.
%%%%%%
%MiezakiS%
%In section~\ref{sec:Niemeier}, 
%we construct matrices $M$ satisfying the assumptions of
%Proposition \ref{prop:const}, using the following forms:
\begin{equation} \label{eq:3nega}
\left(
\begin{array}{rr}
A_1 & A_2 \\
%N -A_2^T & A_3
-A_2^T & A_1^T
\end{array}
\right),
\end{equation}
where $A_1$ and $A_2$ are $m \times m$ negacirculant matrices.
\subsection{Number theoretical results}

In order to give infinite families of $k$-frames 
by Proposition~\ref{prop:const},
the following lemma is needed.
% The proofs for the cases (i) and (k) are given in 
% \cite{{HM12},{Chapman}}, respectively.
% The proofs for the remaining cases
% are given by Miezaki, which are
% similar to those in~\cite{Chapman, HM12, Miezaki}.
%N The cases (i) and (k) are given in 
%N \cite{{HM12},{Chapman}}, respectively.
The proofs are given by Miezaki in private communication~\cite{M}, 
which are
similar to those in~\cite{Chapman, HM12, Miezaki}.

% but our cases are more complicated.
% %% It needs some facts of modular forms for congruence subgroups. 
% Our notation and terminology for modular forms
% follow from~\cite{Miyake}. 

%%%%%%%%%%%%%%%%%%%%%%%%%%%%%%%%

\begin{lem}[Miezaki~\cite{M}] 
\label{lem:prime}
\begin{itemize}
\item[\rm (a)]\label{thm:1}
There are integers $a,b,c$ and $d$ satisfying
$b \equiv c-d \pmod 3$,
$d \equiv a+b \pmod 3$ and 
$p=\frac{1}{3}(a^2+25b^2+c^2+25d^2)$ for each prime $p \ne 2, 5, 7, 13, 23$.

\item[\rm (b)] \label{thm:2}
There are integers $a,b,c$ and $d$ satisfying
%% $c \equiv 2a+b \pmod 4$,
%% $d \equiv a+2b \pmod 4$ and 
$b \equiv c-2d \pmod 4$,
$d \equiv a+2b \pmod 4$ and 
$p=\frac{1}{4}(a^2+7b^2+c^2+7d^2)$ for each prime $p \ne 2,7$.

\item[\rm (c)] \label{thm:7}
There are integers $a,b,c$ and $d$ satisfying
$b \equiv c \pmod 5$,
$d \equiv a \pmod 5$ and 
$p=\frac{1}{5}(a^2+49b^2+c^2+49d^2)$ for each prime $p \ne 2, 3, 7, 11,19,29$.

\item[\rm (d)] \label{thm:6}
There are integers $a,b,c$ and $d$ satisfying
$b \equiv c-2d \pmod 5$,
$d \equiv a+2b \pmod 5$ and 
$p=\frac{1}{5}(a^2+25b^2+c^2+25d^2)$ for each prime $p \ne 2, 3, 17$.

\item[\rm (e)] \label{thm:3}
There are integers $a,b,c$ and $d$ satisfying
%% $c \equiv 2a+b \pmod 4$,
%% $d \equiv a+2b \pmod 4$ and 
$b \equiv c-2d \pmod 4$,
$d \equiv a+2b \pmod 4$ and 
$p=\frac{1}{4}(a^2+15b^2+c^2+15d^2)$ for each prime $p \ne 2,3$.

\item[\rm (f)] \label{thm:4}
There are integers $a,b,c$ and $d$ satisfying
$b \equiv c-2d \pmod 6$,
$d \equiv a+2b \pmod 6$ and 
$p=\frac{1}{6}(a^2+49b^2+c^2+49d^2)$ for each prime $p \ne 2, 3, 5,7$.

\item[\rm (g)] \label{thm:5}
There are integers $a,b,c$ and $d$ satisfying
$b \equiv c \pmod 4$,
$d \equiv a \pmod 4$ and 
$p=\frac{1}{4}(a^2+19b^2+c^2+19d^2)$ for each prime $p \ne 2, 3, 13, 19$.

\item[\rm (h)] \label{thm:8}
There are integers $a,b,c$ and $d$ satisfying
$b \equiv c \pmod 5$,
$d \equiv a \pmod 5$ and 
$p=\frac{1}{5}(a^2+39b^2+c^2+39d^2)$ for each prime $p \ne 2, 3, 7, 17$.

%N \item[\rm (i)] 
%N There are integers $a,b,c$ and $d$ satisfying
%N $a \equiv d \pmod 5$,
%N $b \equiv c \pmod 5$ and 
%N $p=\frac{1}{5}(a^2+29b^2+c^2+29d^2)$ for each prime $p \ne 2, 3, 7, 17, 23$.
%N 
%N \item[\rm (j)] 
%N There are integers $a,b,c$ and $d$ satisfying
%N $ b \equiv c \pmod 4$,
%N $ d \equiv a \pmod 4$ and 
%N $p=\frac{1}{4}(a^2+27b^2+c^2+27d^2)$ for each prime $p \ne 2,3,5$.
%N 
%N \item[\rm (k)] 
%N There are integers $a,b,c$ and $d$ satisfying
%N $b \equiv c \pmod 4$,
%N $d \equiv a \pmod 4$ and 
%N $p=\frac{1}{4}(a^2+11b^2+c^2+11d^2)$ for each prime $p \ne 2, 11$.
%N 
\end{itemize}
\end{lem}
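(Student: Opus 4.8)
The plan is to treat the eight parts uniformly, via the theory of positive-definite quaternary quadratic forms and modular forms of weight~$2$; the argument is of the same type as those in~\cite{Chapman, HM12, Miezaki}, so I describe the method for part~(a) and note that (b)--(h) are handled in exactly the same way, with only the numerical data changing (the modulus, the coefficients of the form, its level, and the exceptional primes).

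First I would recast the two congruence conditions as the choice of a sublattice. The conditions $b \equiv c-d \pmod 3$ and $d \equiv a+b \pmod 3$ define a sublattice $\Lambda \subset \ZZ^4$ of index $9$, and on $\Lambda$ the quadratic form $Q(a,b,c,d)=\tfrac13(a^2+25b^2+c^2+25d^2)$ takes values in $\ZZ$: indeed, modulo $3$ the conditions force $c \equiv a-b$ and $d \equiv a+b$, whence $a^2+25b^2+c^2+25d^2 \equiv a^2+b^2+c^2+d^2 \equiv 3a^2+3b^2 \equiv 0 \pmod 3$. This is precisely the reason the congruence conditions are imposed. Thus $(\Lambda,Q)$ is a positive-definite integral quaternary form, and I would compute its determinant (a power of $5$ here, the $3$'s having cancelled), its level $N$, and the attached quadratic character, which is trivial in this case.

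Next, the theta series $\theta_{\Lambda,Q}(\tau)=\sum_{v\in\Lambda}q^{Q(v)}$ lies in $M_2(\Gamma_0(N))$, and I would split it as $\theta_{\Lambda,Q}=E+S$ with $E$ in the Eisenstein subspace and $S$ a cusp form. For a prime $p \nmid N$ the $p$-th Fourier coefficient of $E$ is a product of local densities and has the shape $c\,(p+1)$ for an explicit constant $c>0$; positivity at $p$ is automatic, since a nondegenerate quaternary form over $\ZZ_p$ with $p$ odd and $p\nmid N$ is universal. On the cuspidal side the space $S_2(\Gamma_0(N))$ has small dimension for the levels occurring here, so I would either express $S$ explicitly in terms of newforms and oldforms or simply invoke the Ramanujan bound $|a_f(p)|\le 2\sqrt p$ for weight-$2$ Hecke eigenforms, giving $|a_S(p)|\le C\sqrt p$. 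Comparing the two, $r_{\Lambda,Q}(p)=a_E(p)+a_S(p)\ge c(p+1)-C\sqrt p>0$ for every prime $p>p_0$ with $p\nmid N$, where $p_0$ is an explicit bound.

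It remains to dispose of the finitely many primes not covered above, namely the primes dividing $N$ and the primes $p\le p_0$. For $p\mid N$ one reads off from the $p$-adic local density whether $Q$ represents $p$ over $\ZZ_p$ at all; where it does not, $p$ is a genuine exception, and these turn out to be exactly the small primes on the list. For the remaining finitely many primes one checks directly, by {\sc Magma}, whether $Q$ represents $p$, confirming that the exceptional set is precisely the one stated in each of (a)--(h). I expect the main obstacle to be effectivity at this last stage: one must make the cusp-form estimate explicit enough, and $p_0$ small enough, that the residual verification is genuinely finite, and one must be sure that in each of the eight cases the Eisenstein main term has been computed with the correct level, character, and local factors at the primes dividing $N$, so that no prime outside the listed sets slips through.
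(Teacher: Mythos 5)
The paper contains no proof of this lemma at all: it is attributed to Miezaki (private communication) and described only as ``similar to those in'' Chapman, Harada--Miezaki and Miezaki, which are exactly the quaternary-form/theta-series arguments you outline (congruence conditions encoded as an index-$k^2$ sublattice on which the form is integral, theta series in $M_2(\Gamma_0(N))$ with trivial character since each determinant is a square, Eisenstein-plus-cusp decomposition with Deligne's bound, and a finite check at bad and small primes), so your proposal follows the intended route and is sound in outline. The only caveat is the one you yourself flag: the actual content lies in the deferred numerical work---explicit local densities at the primes dividing $N$, an explicit cuspidal constant and threshold $p_0$, and the residual verification---without which the precise exceptional sets in (a)--(h) cannot be certified.
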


%%%%%%%%%%%%%%%%%%%%%%%%%%%%%%%%%%
\section{Construction of $m$-frames in some unimodular lattices}
\label{sec:frame}

In this section, we provide a method for
constructing $m$-frames in unimodular lattices, which
are constructed from some self-dual $\ZZ_k$-codes by Construction A\@.
Combining Lemma~\ref{lem:prime} with the method, 
we construct $m$-frames in % some extremal (optimal) 
odd unimodular lattices.

% The following method slightly 
% generalizes~\cite[Propositions 3.3 and 3.6]{HM12}.
The following method is a slight generalization 
of~\cite[Propositions 3.3 and 3.6]{HM12}.
Also, the cases 
$(k,m,\ell)=(4,11,2)$ and $(4,11,0)$ of the following method
are used in~\cite{Chapman,Miezaki},
respectively.

\begin{prop}\label{prop:const}
Let $k$ be an integer with $k \ge 2$, and
let $\ell$ be an integer with $0 \le \ell \le k-1$.
Let $M$ be an $n \times n$ 
%%$(0,\pm 1,\pm2,\ldots,\pm \lfloor k/2 \rfloor)$-matrix
matrix over $\ZZ$ satisfying 
\begin{equation}\label{eq:condition}
M^T=-M \text{ and } M M^T= mI_n,
\end{equation}
where $m+\ell^2 \equiv -1 \pmod{k}$.
Let $C_{2n,k}(M)$ be the $\ZZ_{k}$-code of length $2n$
with generator matrix
$\left(\begin{array}{cc}
I_n & M+ \ell I_n
\end{array}\right)$, where 
the entries of the matrix are regarded as elements of $\ZZ_{k}$.
Let $a,b,c$ and $d$ be integers with 
$b \equiv c-\ell d \pmod {k}$ and 
$d \equiv a+\ell b \pmod {k}$.
Then $C_{2n,k}(M)$ is self-dual, and
the set of $2n$ rows of the following matrix 
\[
F(M)=
\frac{1}{\sqrt{k}}
\left(
\begin{array}{cc}
aI_n+bM & cI_n+dM \\
-cI_n+dM & aI_n-bM
\end{array}
\right)
\]
forms a $\frac{1}{k}(a^2+m b^2+c^2+m d^2)$-frame in 
the unimodular lattice $A_{k}(C_{2n,k}(M))$.
\end{prop}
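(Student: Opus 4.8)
The plan is to establish the three assertions of the proposition in turn: that $C_{2n,k}(M)$ is self-dual, that every row of $F(M)$ lies in $A_{k}(C_{2n,k}(M))$, and that the $2n$ rows of $F(M)$ are pairwise orthogonal of common norm $s:=\frac1k(a^2+mb^2+c^2+md^2)$, so that they form an $s$-frame. All three reduce to block-matrix identities together with congruences modulo $k$, and the only consequence of the hypotheses on $M$ that is needed beyond $M^T=-M$ is the identity
\[
M^2=-mI_n ,
\]
which follows at once from $MM^T=mI_n$ and $M^T=-M$. Throughout I would write $N:=M+\ell I_n$, so the generator matrix is $(I_n\mid N)$ and a vector $(u\mid v)\in\ZZ_k^{2n}$ lies in $C_{2n,k}(M)$ if and only if $v\equiv uN\pmod k$. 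For self-duality, I would compute $NN^T=(M+\ell I_n)(-M+\ell I_n)=-M^2+\ell^2 I_n=(m+\ell^2)I_n\equiv-I_n\pmod k$; hence the generator matrix $G=(I_n\mid N)$ satisfies $GG^T\equiv0\pmod k$, so $C_{2n,k}(M)\subseteq C_{2n,k}(M)^\perp$, and since a code with a systematic generator matrix $(I_n\mid *)$ has exactly $k^n$ codewords while $|C|\,|C^\perp|=k^{2n}$ in general, equality follows.

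The heart of the proof is showing that the rows of $F(M)$ lie in $A_k(C_{2n,k}(M))$. Since the rows of $\sqrt k\,F(M)$ are integer vectors, it suffices to check that each of the two block rows $(P\mid Q)$ of $\sqrt k\,F(M)$, namely $(aI_n+bM\mid cI_n+dM)$ and $(-cI_n+dM\mid aI_n-bM)$, satisfies $Q\equiv PN\pmod k$, i.e. the two matrix congruences
\[
cI_n+dM\equiv(aI_n+bM)N,\qquad aI_n-bM\equiv(-cI_n+dM)N\pmod k.
\]
Expanding the right-hand sides with $M^2=-mI_n$ gives $(\ell a-mb)I_n+(a+\ell b)M$ and $(-\ell c-md)I_n+(\ell d-c)M$ respectively, so the scalar congruences required are $d\equiv a+\ell b$, $-b\equiv \ell d-c$ (which are exactly the two hypotheses on $a,b,c,d$), together with $c\equiv \ell a-mb$ and $a\equiv-\ell c-md$. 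The latter two are not stated hypotheses, but they follow: substituting $d\equiv a+\ell b$ into $b\equiv c-\ell d$ yields $c\equiv \ell a+(1+\ell^2)b\equiv \ell a-mb$ since $1+\ell^2\equiv-m\pmod k$, and symmetrically substituting $b\equiv c-\ell d$ into $d\equiv a+\ell b$ yields $a\equiv-\ell c-md$. I expect this bookkeeping — recognizing that the two displayed congruences on $a,b,c,d$ are precisely what is needed once $m+\ell^2\equiv-1$ is also invoked — to be the main obstacle; the rest is mechanical. Granting it, every row of $\sqrt k\,F(M)$ reduces modulo $k$ into $C_{2n,k}(M)$, so every row of $F(M)$ lies in $A_k(C_{2n,k}(M))$.

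Finally, for the frame property I would compute $F(M)F(M)^T$ directly. Using $M^T=-M$, the block rows of $\sqrt k\,F(M)^T$ are $(aI_n-bM\mid-cI_n-dM)$ and $(cI_n-dM\mid aI_n+bM)$; multiplying the blocks out and simplifying with $M^2=-mI_n$, each of the two diagonal blocks of $k\,F(M)F(M)^T$ equals $(a^2+mb^2+c^2+md^2)I_n=ks\,I_n$ (the cross terms in $M$ cancel), while the two off-diagonal blocks cancel identically to $0$. Hence $F(M)F(M)^T=sI_{2n}$. Thus the $2n$ rows $f_1,\dots,f_{2n}$ of $F(M)$ lie in the $2n$-dimensional lattice $A_k(C_{2n,k}(M))$ and satisfy $(f_i,f_j)=s\,\delta_{i,j}$ with $s=\frac1k(a^2+mb^2+c^2+md^2)$, so they form an $s$-frame, as claimed. (One may note that $s$ is automatically a nonnegative integer, either from the congruences on $a,b,c,d$ or simply because $s=(f_1,f_1)$ is the norm of a lattice vector.)
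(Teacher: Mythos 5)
Your proof is correct and takes essentially the same route as the paper: show that the rows of $\sqrt{k}\,F(M)$ reduce modulo $k$ to codewords of $C_{2n,k}(M)$, hence lie in $A_k(C_{2n,k}(M))$ after dividing by $\sqrt{k}$, and then compute $F(M)F(M)^T=\frac{1}{k}(a^2+mb^2+c^2+md^2)I_{2n}$. The only cosmetic difference is in the membership step: the paper forms integer combinations $sG+tH$ of the two generator matrices $G=(I_n\mid M+\ell I_n)$ and $H=(M-\ell I_n\mid I_n)$ (with $(s,t)=(d,b)$), whereas you verify the defining congruence $Q\equiv P(M+\ell I_n)\pmod{k}$ directly, deriving the two auxiliary congruences from the hypotheses; you also spell out the self-duality count that the paper merely asserts.
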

\begin{proof}
Since 
$M M^T= mI_n$, $M^T=-M$ and $m+ \ell^2 \equiv -1 \pmod{k}$,
$C_{2n,k}(M)$ is a self-dual $\ZZ_{k}$-code of length $2n$.
Thus, $A_{k}(C_{2n,k}(M))$ is a unimodular lattice.
%%%
Since $C_{2n,k}(M)$ is self-dual and $M^T=-M$,
both $G=\left(\begin{array}{cc}
I_n & M+\ell I_n
\end{array}\right)$ 
and
$H=\left(\begin{array}{cc}
M-\ell I_n & I_n
\end{array}\right)$ are  generator matrices of $C_{2n,k}(M)$.

Let $s,t$ be integers.
Here, we regard the entries of the matrices $G,H$ as integers.
Then
\[
\left(\begin{array}{c}
sG+tH \\
-tG+sH
\end{array}\right)
=
\left(\begin{array}{cc}
(s-\ell t)I_n+tM & (\ell s+t)I_n+ sM \\
-(\ell s+t)I_n+sM & (s-\ell t)I_n -tM
\end{array}\right).
\]
% Put $a=s-\ell t, b=t, c=\ell s+t, d=s$.
Hence, if
$b \equiv c-\ell d \pmod {k}$ and 
$d \equiv a+\ell b \pmod {k}$, then
all rows of the matrix $F(M)$
are vectors of $A_{k}(C_{2n,k}(M))$.
Since $F(M) F(M)^T=\frac{1}{k}(a^2+mb^2+c^2+md^2)I_{2n}$,
the result follows.
\end{proof}

\begin{rem}
\begin{itemize}
\item[(i)]
It follows from the assumption 
that 
$a^2+m b^2+c^2+m d^2 \equiv 0 \pmod k$.
\item[(ii)]
By~\cite[Proposition 2.12]{GS}, 
if $n \equiv 2 \pmod 4$ then $m$ must be a square.
\end{itemize}
\end{rem}

\begin{table}[thbp]
\caption{Matrices satisfying the assumptions in 
Proposition~\ref{prop:const}}
\label{Tab:D}
\begin{center}
%{\small
{\footnotesize
%{\scriptsize
\begin{tabular}{c|c|l|l}
\noalign{\hrule height0.8pt}
$M$ & $(k,m,\ell)$ 
 & \multicolumn{1}{c|}{$r_{A_1}$} &\multicolumn{1}{c}{$r_{A_2}$} \\
\hline
$D_{6 }$ & (3, 25, 1)  & $(0,  2,  2)$ & $(0,  1, -4)$ \\
$P_{8 }$ & (4,  7, 2)  && \\
%%%%
$D_{10}$ & (3, 25, 1)  & $(0,0,2,2,0)$&$(1,2,2,-2,2)$ \\
$D'_{10}$ &(3, 25, 1) & $(0,0,0,0,0)$&$(-3, -2,  2, -2,  2)$ \\
$D''_{10}$&(5, 49, 0) & $( 0, 0,3, 3,0)$&$(-2,-3,4,-1,1)$ \\
%%%%
$D_{14}$ & (3, 25, 1)  & $(0,2,1,0,0,1,2)$&$(-1,-2,1,-2,2,1,0)$ \\
$D'_{14}$ &(5, 25, 2) & $(0,0,2,-1,-1,2,0)$&$(-2,-1,-2,0,-1,-1,-2)$ \\
%% $D''_{14}$ & 9 &121& 2 & $(0,3,-3,-3,-3,-3,3)$&$(1,-4,1,-4,4,4,1)$ \\
%%%%
$D_{16}$ & (4, 15, 2)  & $(0,1,1,0,1,0,1,1)$& $(1,1,1,-1,-1,2,-1,0)$ \\
$D_{18}$ & (6, 49, 2)  & $(0,1,-3,0,2,2,0,-3,1)$ & $(-2,2,-1,2,1,2,1,1,1)$\\
$P_{20}$ & (4, 19, 0)  & & \\
$D_{22}$ & (5, 25, 2)  & 
{$(0,0,-1,1,0,0,0,0,1,-1,0)$} & 
{$(1,0,-2,1,1,1,2,1,0,2,-2)$} \\
$D_{24}$ & (5, 39, 0)  & 
{$( 0, 1, 1, 1, 2,-1, 1,-1, 2, 1, 1, 1)$} &
{$(-2,-1, 2,-1,-1,-2, 0, 1, 0, 2,-1,-1)$} \\
\noalign{\hrule height0.8pt}
\end{tabular}
}
\end{center}
\end{table}

The matrices $P_{p+1}$ $(p=7,19)$, 
which are given in Section~\ref{subsec:M},  
satisfy the assumptions in Proposition~\ref{prop:const},
for the integers $k,m$ and $\ell$ listed in Table~\ref{Tab:D}.
%Using the form (\ref{eq:M}), we have found more 
%MiezakiS%
Using the form (\ref{eq:3nega}),
%N under the condition that $A_3=A_1^{T}$, 
we have found 
%MiezakiE%
matrices $D_n$ $(n=6,10,14,16,18,22,24)$, $D'_{n}$ $(n=10,14)$
and $D''_{10}$
satisfying the assumptions in Proposition~\ref{prop:const},
for the integers $k,m$ and $\ell$ listed in Table~\ref{Tab:D},
%where the first rows $r_A$ and $r_B$ of  negacirculant matrices
%MiezakiS%
where the first rows $r_{A_1}$ and $r_{A_2}$ 
of negacirculant matrices
%MiezakiE%
%$A$ and $B$ in (\ref{eq:M}) are also listed in Table~\ref{Tab:D}.
%$A$ and $B$ in (\ref{eq:3nega}) are also listed in Table~\ref{Tab:D}.
%MiezakiS%
$A_1$ and $A_2$ in (\ref{eq:3nega}) are listed in Table~\ref{Tab:D}.
%MiezakiE%

%N %MiezakiS%
%N In order to establish $2k$-frames in some Niemeier lattices
%N in Section \ref{sec:Niemeier}, we need to find more 
%N matrices satisfying the assumptions in Proposition~\ref{prop:const}.
%N To do this, we employ
%N the forms (\ref{eq:3nega}), (\ref{eq:6nega}) and (\ref{eq:10nega}), 
%N and 
%N matrices $D_{12,1}$, $D_{12,2}$, $D_{12,3}$ and $D_{12,4}$ 
%N % satisfying the assumptions in Proposition~\ref{prop:const},
%N % for the integers $k,m$ and $\ell$ listed in Table~\ref{Tab:T1},
%N have been found.
%N In Table~\ref{Tab:T1}, we list
%N the first rows $r_{A_i}$, $r_{B_i}$ 
%N and $r_{C_i}$ of negacirculant matrices
%N %$A$, $B$ and $C$ in (\ref{eq:M}) are also listed in Table~\ref{Tab:T1}.
%N $A_i$, $B_i$ and $C_i$ in 
%N (\ref{eq:3nega}), (\ref{eq:6nega}) and (\ref{eq:10nega}), 
%N respectively.
%N Moreover, in Figure \ref{Fig1},
%N we list more matrices $D_{12,5}, D_{12,6}$ and $D_{12,7}$
%N which are not based on negacirculant matrices.
%N These seven matrices satisfy the 
%N assumptions of Proposition \ref{prop:const}
%N for the integers $k,m$ and $\ell$ listed in Table~\ref{Tab:T1}.
%N % for the integers $k,m$ and $\ell$ listed in Table~\ref{Tab:T2}. 
%N % satisfying the assumptions in Proposition~\ref{prop:const},

By Proposition~\ref{prop:const},
for each of the matrices $M$ given in Table~\ref{Tab:D}, %N and \ref{Tab:T1},
the odd unimodular lattice
$A_k(C_{2n,k}(M))$, which is constructed from
the Type~I $\ZZ_k$-code $C_{2n,k}(M)$, contains a
$\frac{1}{k}(a^2+m b^2+c^2+m d^2)$-frame
for integers $a,b,c$ and $d$ with 
$b \equiv c-\ell d \pmod {k}$ and 
$d \equiv a+\ell b \pmod {k}$.
The minimum norms $\min(L)$ of 
the lattices $L=A_k(C_{2n,k}(M))$ listed in Table~\ref{Tab:L},
which have been determined by {\sc Magma},
are also listed in the table.

\begin{lem}\label{lem:key}
% Suppose that $L$ is any of 
% $A_k(C_{2n,k}(D_n))$ 
% $((n,k)=(6,3),(10,3),$ 
% $(14,3),(16,4),(18,6),(22,5))$,
% $A_k(C_{2n,k}(D'_n))$ $((n,k)=(10,3))$, and 
% $A_k(C_{2n,k}(P_n))$ $((n,k)=(8,4),(20,4))$ listed in 
% Table \ref{Tab:L}.
Suppose that $L$ is any of 
the lattices listed in Table~\ref{Tab:L}.
Then $L$ contains a $k$-frame
for an integer $k$ satisfying the conditions {\rm ($\star$)}
listed in Table~\ref{Tab:L},
where $m_i$ in {\rm ($\star$)} is a non-negative integer.
\end{lem}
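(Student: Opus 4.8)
The plan is to verify that each lattice $L=A_k(C_{2n,k}(M))$ in Table~\ref{Tab:L} satisfies the hypotheses of Proposition~\ref{prop:const}, and then to specialize the free parameters $a,b,c,d$ using the number-theoretic input of Lemma~\ref{lem:prime} to realize exactly the frame sizes described by condition $(\star)$. Concretely, for a fixed row of Table~\ref{Tab:D} with data $(k,m,\ell)$ and matrix $M$, Proposition~\ref{prop:const} tells us that $A_k(C_{2n,k}(M))$ contains a $\frac1k(a^2+mb^2+c^2+md^2)$-frame whenever $b\equiv c-\ell d$ and $d\equiv a+\ell b\pmod k$. Thus the set of attainable frame indices is the set of values of the quaternary form $\frac1k(a^2+mb^2+c^2+md^2)$ subject to those two congruence conditions. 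The first step is therefore to identify, for each row, which part (a)--(h) of Lemma~\ref{lem:prime} matches the triple $(k,m,\ell)$: for instance $(3,25,1)$ matches part (a), $(4,7,2)$ matches part (b), $(5,49,0)$ matches part (c), $(5,25,2)$ matches part (d), $(4,15,2)$ matches part (e), $(6,49,2)$ matches part (f), $(4,19,0)$ matches part (g), and $(5,39,0)$ matches part (h). In each case the lemma guarantees the frame size can be taken to be any prime $p$ outside a small explicit finite exceptional set.

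The second step is to bootstrap from primes to the full arithmetic progression claimed in $(\star)$. Here two tools combine. First, whenever the ambient dimension $2n$ is divisible by $4$ — which holds for every lattice in Table~\ref{Tab:L}, this being precisely the point of Lemma~\ref{lem:frame} — a $k$-frame yields a $km$-frame for every positive integer $m$; so the set of frame sizes of $L$ is closed under multiplication. Second, one must handle the finitely many excluded primes and the prime powers of $2,3,5,7$ directly: for those small target indices $N$ one exhibits explicit integers $a,b,c,d$ satisfying the two congruences with $\frac1k(a^2+mb^2+c^2+md^2)=N$, or else observes that $N$ is a product of already-realized values and invokes closure under multiplication. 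Combining "all sufficiently large primes are realized", "the realized set is multiplicatively closed", and "the small sporadic cases are handled by hand" should give exactly the statement that $L$ contains a $k$-frame for all $k$ of the form $(\star)$, i.e. all $k$ lying in the specified union of arithmetic progressions minus a finite set of small exceptions, with the $m_i$ ranging over non-negative integers.

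The main obstacle I expect is not the general structure but the bookkeeping of the exceptional primes and small cases: Lemma~\ref{lem:prime} excludes different finite sets in each part, and one has to check by explicit search (the paper's computations were done in {\sc Magma}) that every small $k$ allowed by $(\star)$ is in fact representable by the relevant constrained quaternary form — or, failing that, that the allowed list in $(\star)$ has been trimmed precisely to remove the genuinely non-representable small values. A secondary technical point is verifying condition (\ref{eq:condition}) for the matrices $D_n, D'_n, D''_{10}$ built from the negacirculant template (\ref{eq:3nega}): one must confirm $M^T=-M$ and $MM^T=mI_n$ for the listed first rows $r_{A_1},r_{A_2}$, and that $m+\ell^2\equiv-1\pmod k$; for the Paley-type matrices $P_8,P_{20}$ the identities $P_{p+1}^T=-P_{p+1}$ and $P_{p+1}P_{p+1}^T=pI_{p+1}$ were already recorded in Section~\ref{subsec:M}. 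Once these verifications are in place, the proof reduces to quoting Proposition~\ref{prop:const} and Lemma~\ref{lem:prime} together with Lemma~\ref{lem:frame}, plus the finite table of hand-checked small frames.
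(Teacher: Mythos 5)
Your proposal matches the paper's proof: verify the hypotheses of Proposition~\ref{prop:const} for the matrices in Table~\ref{Tab:D}, use the corresponding case of Lemma~\ref{lem:prime} to realize every prime outside the exceptional set as a frame size, and then apply Lemma~\ref{lem:frame} to pass to arbitrary multiples. The only superfluous element is the proposed hand-checking of small sporadic values: condition ($\star$) excludes precisely the (infinite) set of $k$ all of whose prime factors lie in the exceptional set, so the ``non-exceptional prime times anything'' argument already covers every $k$ allowed by ($\star$); the explicit searches for small frames are deferred to Section~\ref{sec:main}, where they upgrade the lemma to the ``if and only if'' theorems.
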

\begin{proof}
All cases are similar, and we only give details for the
lattice $A_3(C_{12,3}(D_6))$.
Let $a,b,c$ and $d$ be integers with 
$b \equiv c-d \pmod 3$ and
$d \equiv a+b \pmod 3$.
By Proposition~\ref{prop:const},
$A_3(C_{12,3}(D_6))$ contains a
$\frac{1}{3}(a^2+25b^2+c^2+25d^2)$-frame.
By Lemma~\ref{lem:prime} (a),
there are integers $a,b,c$ and $d$ satisfying
$b \equiv c-d \pmod 3$,
$d \equiv a+b \pmod 3$ and 
$p=\frac{1}{3}(a^2+25b^2+c^2+25d^2)$ for each prime $p \ne 2, 5, 7, 13, 23$.
The result follows from Lemma~\ref{lem:frame}.
%% The other cases requirer 
%% the cases of Theorem \ref{thm:prime}, 
%% which are listed in Table \ref{Tab:L}.
For the other lattices, Table~\ref{Tab:L} lists
%the cases of Lemma~\ref{lem:prime}.
the cases of Lemma~\ref{lem:prime}, which are needed in the proof.
\end{proof}

%%%%%%%%%%%%%%%%%%%%%%%%%%%
\begin{table}[thbp]
\caption{Unimodular lattices by Proposition~\ref{prop:const}}
\label{Tab:L}
\begin{center}
{\small
%{\footnotesize
%{\scriptsize
\begin{tabular}{c|c|l|c}
\noalign{\hrule height0.8pt}
$L$ & $\min(L)$ & \multicolumn{1}{c|}{Condition ($\star$)} 
&  Case\\
%&  Theorem \ref{thm:prime}\\
\hline
$A_3(C_{12,3}(D_6))$ &2 &
$k \ge 2$, $k\ne 2^{m_1} 5^{m_2} 7^{m_3}13^{m_4}23^{m_5}$ & (a) \\
$A_4(C_{16,4}(P_{8}))$  &2~\cite{Z4-H40} &
$k \ge 2$, $k\ne 2^{m_1}7^{m_2}$ & (b)\\
$A_3(C_{20,3}(D_{10}))$ &2 &
$k \ge 2$, $k\ne 2^{m_1} 5^{m_2} 7^{m_3}13^{m_4}23^{m_5}$   & (a) \\
$A_3(C_{20,3}(D'_{10}))$ &2 &
$k \ge 2$, $k\ne 2^{m_1} 5^{m_2} 7^{m_3}13^{m_4}23^{m_5}$   & (a) \\
$A_5(C_{20,5}(D''_{10}))$ &2 &
$k \ge 2$, $k\ne 2^{m_1} 3^{m_2} 7^{m_3} 11^{m_4} 19^{m_5} 29^{m_6}$ &(c) \\
$A_3(C_{28,3}(D_{14}))$ &3 &
$k \ge 3$, $k\ne 2^{m_1} 5^{m_2} 7^{m_3}13^{m_4}23^{m_5}$ & (a)  \\
$A_5(C_{28,5}(D'_{14}))$ &3 &
$k \ge 3$, $k\ne 2^{m_1} 3^{m_2} 17^{m_3}$ & (d)\\ 
$A_4(C_{32,4}(D_{16}))$ &4 &
$k \ge 4$, $k\ne 2^{m_1} 3^{m_2}$  & (e) \\
$A_6(C_{36,6}(D_{18}))$ &4 &
$k \ge 4$, $k\ne 2^{m_1} 3^{m_2} 5^{m_3}7^{m_4}$ & (f) \\
$A_4(C_{40,4}(P_{20}))$ &4~\cite{Z4-H40} &
$k \ge 4$, $k\ne 2^{m_1}3^{m_2} 13^{m_3} 19^{m_4}$ & (g)\\
$A_5(C_{44,5}(D_{22}))$ &4 &
$k \ge 4$, $k\ne 2^{m_1} 3^{m_2} 17^{m_3}$ & (d)\\ 
%%%%
$A_5(C_{48,5}(D_{24}))$ &5 &
$k \ge 5$, $k\ne 2^{m_1} 3^{m_2} 7^{m_3} 17^{m_4}$ & (h)\\ 
%N \hline
%N $A_5(C_{24,5}(D_{12,1}))$ &2 &
%N $k \ge 2$, $k\ne 2^{m_1} 3^{m_2} 7^{m_3}17^{m_4}23^{m_5}$ & (i)\\ 
%N $A_3(C_{24,3}(D_{12,2}))$ &2 &
%N $k \ge 2$, $k\ne 2^{m_1} 5^{m_2} 7^{m_3}13^{m_4}23^{m_5}$ & (a)\\ 
%N $A_4(C_{24,4}(D_{12,3}))$ &2 &
%N $k \ge 2$, $k\ne 2^{m_1} 3^{m_2} 5^{m_3}$ & (j)\\ 
%N $A_5(C_{24,5}(D_{12,4}))$ &2 &
%N $k \ge 2$, $k\ne 2^{m_1} 3^{m_2} 17^{m_3}$ & (d)\\ 
%N $A_4(C_{24,4}(D_{12,5}))$ &2 &
%N $k \ge 2$, $k\ne 2^{m_1} 11^{m_2}$ & (k)\\ 
%N $A_4(C_{24,4}(D_{12,6}))$ &2 &
%N $k \ge 2$, $k\ne 2^{m_1}3^{m_2} 13^{m_3} 19^{m_4}$ & (g)\\ 
%N $A_4(C_{24,4}(D_{12,7}))$ &2 &
%N $k \ge 2$, $k\ne 2^{m_1} 11^{m_2}$ & (k)\\ 
%%%%%%%%%%%%%
%\hline
%$A_3(C_{20,3}(D'_{10}))$ &2 &
%$k \ge 2$, $k\ne 2^{m_1} 5^{m_2} 7^{m_3}13^{m_4}23^{m_5}$   & (a) \\
%$A_5(C_{20,5}(D''_{10}))$ &2 &
%$k \ge 2$, $k\ne 2^{m_1} 3^{m_2} 7^{m_3} 11^{m_4} 19^{m_5} 29^{m_6}$ &(c) \\
%%%%%%%%%%%%%
%$A_5(C_{28,5}(D'_{14}))$ &3 &
%$k \ge 3$, $k\ne 2^{m_1} 3^{m_2} 17^{m_3}$ & (d)\\ 
% $A_9(C_{28,9}(D''_{14}))$ &3 &
% $k \ge 3$, $k\ne 
% 2^{m_1} 3^{m_2} 5^{m_3} 7^{m_4} 11^{m_5} 13^{m_6}$  & 
% {\bf remaining} \\ 
% &&$17^{m_7} 23^{m_8} 29^{m_9} 37^{m_{10}} 41^{m_{11}} 47^{m_{12}} 
% 97^{m_{12}}$ &\\ 
\noalign{\hrule height0.8pt}
\end{tabular}
}
%{\bf (consider the order of matrices !)}
\end{center}
\end{table}
%%%%%%%%%%%%%%%%%%%%%%%%%%%%%%%%%%%%%%%%%%%%%%%%

%%%%%%%%%%%%%%%%%%%%%%%%%%%%%%%%%%%%%%%%%%%%%%%%%%%%%%%%%%%%%%%%%%%%%%%%%
\section{Frames of some extremal odd unimodular lattices
and extremal Type~I $\ZZ_k$-codes}
\label{sec:main}

In this section, 
for each $L$ of 
the extremal (optimal) odd unimodular lattices 
listed in Table~\ref{Tab:L}, 
we determine all integers $k$ such that $L$ contains a $k$-frame.
% In this section, we examine the existence of
% a $k$-frame in some extremal (optimal) unimodular lattices 
% for every positive integer $k$ with $k \ge \min(L)$.
This yields the existence of 
some extremal (near-extremal) Type~I $\ZZ_{k}$-codes.
% of lengths $n=12,16,20,32,36,40, 44$ 
% and a near-extremal Type~I $\ZZ_k$-code of
% length $n=28$ for a positive integer $k$,
% where $k \ne 1,3$ if $n =32$ and $k \ne 1$ otherwise.

%%%%%%%%%%%%%%%%%%%%%%%%%%%%%%
\subsection{Frames of $D_{12}^+$ and Type~I $\ZZ_k$-codes of length 12}

There is a unique extremal odd unimodular lattice
in dimension $12$, up to isomorphism
(see~\cite[Table~16.7]{SPLAG}),
where the lattice is denoted by $D_{12}^+$.
There is a unique binary extremal Type~I code of length $12$,
up to equivalence~\cite{Pless72b},
where the code is denoted by $B_{12}$ in~\cite[Table~2]{Pless72b}.
It is known that $D_{12}^+ \cong A_2(B_{12})$.
Hence, by Lemma~\ref{lem:key}, it is sufficient to 
investigate the existence of
a $k$-frame in $D_{12}^+$ for $k=5,7,13,23$.

There are 16 inequivalent Type~I $\ZZ_5$-codes of 
length $12$~\cite{LPS-GF5}.
We have verified by {\sc Magma} that
$D_{12}^+ \cong A_5(C_i)$ for $i= 8, 11, 13, 16$, 
where $C_i$ denotes the $i$th code in~\cite[Table~III]{LPS-GF5}.
There are $64$ inequivalent Type~I $\ZZ_7$-codes of 
length $12$~\cite{HO02}, where these codes are denoted by
$C_{12,i}$ ($i=1,2,\ldots,64$) in~\cite[Table~1]{HO02}.
We have verified by {\sc Magma} that
$D_{12}^+ \cong A_7(C_{12,i})$
for $i=11$, $12$, $15$, $17$, $20$, $38$, $42$, $43$, $47$, $49$, $51$, 
$54$, $55$, $57, \ldots, 62$.
For $k=13$ and $23$,
let $C_{k,12}$ be the $\ZZ_k$-code with generator 
matrix of the form (\ref{eq:GM}),
%% where the first rows $r_A$ and $r_B$ of $A$ and $B$
%% are listed in Table \ref{Tab:12}.
where the first rows $r_A$ and $r_B$ of $A$ and $B$
are as follows:
\[
(r_A,r_B)=
((0,  1,  6),(  2,  3,  1)) \text{ and }
((0,  1, 18),(  7,  4,  0)),
\]
respectively.
Since $AA^T+BB^T=-I_{3}$,
%% and the Euclidean weights of all rows of the 
%% generator matrix are divisible by ,
these codes are Type~I\@.
Moreover, we have verified 
by {\sc Magma} that 
$A_k(C_{k,12}) \cong D_{12}^+$ ($k=13,23$).
Hence, combining with Lemma~\ref{lem:key}, we have the following:

\begin{thm}\label{thm:D12}
$D_{12}^+$ contains a $k$-frame
if and only if $k$ is an integer with $k \ge 2$.
% $D_{12}^+$ contains a $k$-frame
% for every positive integer $k$ with $k \ge 2$.
\end{thm}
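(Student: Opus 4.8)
The plan is to establish Theorem~\ref{thm:D12} by combining the infinite family of frames already produced by Lemma~\ref{lem:key} with a finite case analysis that disposes of the finitely many exceptional values of $k$. Recall that Lemma~\ref{lem:key} applied to the lattice $A_3(C_{12,3}(D_6))$ shows that this lattice---which is an extremal odd unimodular lattice of dimension $12$, hence isomorphic to $D_{12}^+$ by the uniqueness recorded in~\cite[Table~16.7]{SPLAG}---contains a $k$-frame for every integer $k \ge 2$ with $k \ne 2^{m_1}5^{m_2}7^{m_3}13^{m_4}23^{m_5}$. By Lemma~\ref{lem:frame}, since $12$ is divisible by $4$, it suffices to produce a $p$-frame for each prime $p$; and the only primes not already covered are $p = 2, 5, 7, 13, 23$. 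The prime $p=2$ is handled by the classical fact $D_{12}^+ \cong A_2(B_{12})$, where $B_{12}$ is the (unique) binary extremal Type~I code of length $12$ from~\cite{Pless72b}. So the entire content of the proof reduces to exhibiting, for each of $p = 5, 7, 13, 23$, a self-dual $\ZZ_p$-code $C$ of length $12$ with $A_p(C) \cong D_{12}^+$.

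For $p = 5$ and $p = 7$ I would invoke the existing classifications: there are $16$ inequivalent Type~I $\ZZ_5$-codes of length $12$~\cite{LPS-GF5} and $64$ inequivalent Type~I $\ZZ_7$-codes of length $12$~\cite{HO02}, and one checks by {\sc Magma} which of these yield $D_{12}^+$ under Construction~A. Concretely, one computes the theta series (or, more efficiently, the minimum norm together with the kissing number, or an isometry invariant) of each $A_5(C_i)$ and $A_7(C_{12,i})$ and identifies those isomorphic to $D_{12}^+$; since extremal odd unimodular lattices in dimension $12$ are unique, confirming $\min = 2$ and the correct theta series is enough. For $p = 13$ and $p = 23$, where no published classification is convenient, I would instead write down an explicit code directly: take the generator matrix of the form~\eqref{eq:GM} built from $3 \times 3$ negacirculant blocks $A, B$ with prescribed first rows $r_A, r_B$, verify the self-duality condition $AA^T + BB^T = -I_3$ over $\ZZ_{13}$ and $\ZZ_{23}$ respectively (a direct $3\times 3$ matrix computation), confirm the code is Type~I (some Euclidean weight not divisible by $4p$), and then check by {\sc Magma} that $A_{13}(C_{13,12})$ and $A_{23}(C_{23,12})$ are both isomorphic to $D_{12}^+$.

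Assembling these pieces: $D_{12}^+$ contains a $p$-frame for every prime $p$ (from $p=2$ via $B_{12}$, from $p = 5,7,13,23$ via the codes just described, and from all other primes via Lemma~\ref{lem:key}), hence by Lemma~\ref{lem:frame} it contains a $k$-frame for every integer $k \ge 2$; the converse direction is vacuous since $k \ge 2$ is part of the standing hypothesis on $\ZZ_k$-codes and frames. This proves the theorem.

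The only genuine obstacle is the verification step for $p = 13$ and $p = 23$: one must actually find first rows $r_A, r_B$ that simultaneously satisfy the self-duality relation, produce a Type~I code, and---crucially---yield a Construction~A lattice with minimum norm $2$ rather than minimum norm $1$ (which would happen if some codeword had too small a Euclidean weight, i.e. if $d_E(C) < 2p$). This is a finite search over negacirculant pairs, entirely routine to carry out by computer, but it is the one place where the argument is not purely structural; the candidate rows $(r_A, r_B) = ((0,1,6),(2,3,1))$ for $p = 13$ and $((0,1,18),(7,4,0))$ for $p = 23$ are the output of exactly such a search, and all that remains in the written proof is to record that {\sc Magma} confirms the required isomorphism.
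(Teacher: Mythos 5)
Your proposal is correct and follows essentially the same route as the paper: identify $A_3(C_{12,3}(D_6))$ with $D_{12}^+$ via uniqueness of the extremal odd unimodular lattice in dimension $12$ so that Lemma~\ref{lem:key} handles all $k$ outside $2^{m_1}5^{m_2}7^{m_3}13^{m_4}23^{m_5}$, reduce to the primes $2,5,7,13,23$ by Lemma~\ref{lem:frame}, settle $p=2$ by $D_{12}^+\cong A_2(B_{12})$, $p=5,7$ by the classified Type~I codes of~\cite{LPS-GF5,HO02}, and $p=13,23$ by the explicit negacirculant codes with first rows $((0,1,6),(2,3,1))$ and $((0,1,18),(7,4,0))$ checked by {\sc Magma}. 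The only cosmetic differences (explicitly noting the uniqueness argument, and the remark that for odd $p$ self-dual already means Type~I) do not change the argument.
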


By Lemma~\ref{lem:LtoC}, we have the following:

\begin{cor}\label{cor:12}
There is an extremal Type~I $\ZZ_{k}$-code of
length $12$ 
if and only if $k$ is an integer with $k \ge 2$.
% for every positive integer $k$ with $k \ge 2$.
\end{cor}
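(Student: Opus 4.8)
\textbf{Proof proposal for Corollary~\ref{cor:12}.}
The plan is to deduce the corollary directly from Theorem~\ref{thm:D12} together with Lemma~\ref{lem:LtoC}, handling the ``only if'' direction separately from the ``if'' direction. First I would observe that for $k \ge 2$, Theorem~\ref{thm:D12} tells us $D_{12}^+$ contains a $k$-frame, and since $D_{12}^+$ is extremal with $\min(D_{12}^+) = 2$, Lemma~\ref{lem:LtoC} yields a self-dual $\ZZ_k$-code $C$ of length $12$ with $d_E(C) \ge k\min(D_{12}^+) = 2k$. By the bound~(\ref{eq:nbound}), a self-dual $\ZZ_k$-code of length $12$ satisfies $d_E(C) \le 2k\lfloor 12/24\rfloor + 2k = 2k$ (none of the exceptional cases in~(\ref{eq:nbound}) apply at length $12$), so in fact $d_E(C) = 2k$, meaning $C$ is extremal. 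One small point to verify in passing: such an extremal $C$ must be Type~I, not Type~II — but a Type~II $\ZZ_{2k'}$-code only exists when the length is divisible by $8$, and $8 \nmid 12$, so every self-dual $\ZZ_k$-code of length $12$ is automatically Type~I. This gives the ``if'' direction.

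For the ``only if'' direction, the definition in the paper restricts attention to $k \ge 2$ throughout (the ring $\ZZ_k$ is only considered for $k \ge 2$), so there is nothing further to prove: ``$k$ an integer with $k \ge 2$'' is the full statement. Thus the proof amounts to invoking Theorem~\ref{thm:D12}, applying Lemma~\ref{lem:LtoC}, and citing the bound~(\ref{eq:nbound}) to see that the resulting code meets it with equality.

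I do not anticipate a genuine obstacle here; the only place requiring care is making sure the bound~(\ref{eq:nbound}) is applied with the correct case — at $n = 12$ we are in the ``otherwise'' branch, giving exactly $2k$, which is what matches the frame construction. It is also worth recording explicitly that the corollary is sharp precisely because $D_{12}^+$ is the \emph{unique} extremal odd unimodular lattice in dimension $12$, so no extremal Type~I $\ZZ_k$-code of length $12$ could arise from any other lattice via Construction~A; but this is not needed for the stated equivalence, only for the surrounding discussion.
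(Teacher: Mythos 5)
Your proposal is correct and follows the same route as the paper: Theorem~\ref{thm:D12} supplies the $k$-frame in $D_{12}^+$, Lemma~\ref{lem:LtoC} converts it into a self-dual $\ZZ_k$-code with $d_E \ge 2k$, and the bound~(\ref{eq:nbound}) at $n=12$ forces equality, hence extremality. Your extra observations (that $8 \nmid 12$ rules out Type~II, and that the ``only if'' direction is vacuous under the standing assumption $k \ge 2$) are exactly the points the paper leaves implicit, so there is nothing to fix.
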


% By Lemma \ref{lem:T}, we have the following:
% 
% \begin{cor}
% Let $\sum_{m=0}^{\infty}A_{m}q^m$ denote the theta series
% of $D_{12}^+$.
% Then $A_{k} \ge 24$ for every positive integer $k$ with $k \ge 2$.
% \end{cor}

%%%%%%%%%%%%%%%%%%%%%%%%%%%%%%%%%%%%%%%%%%%%%%%%
%% \begin{table}[thb]
%% \caption{Extremal Type I $\ZZ_{p}$-codes of length $12$ ($p=13,23$)}
%% \label{Tab:12}
%% \begin{center}
%% %{\small
%% {\footnotesize
%% %{\scriptsize
%% \begin{tabular}{c|l|l}
%% \noalign{\hrule height0.8pt}
%% $p$& \multicolumn{1}{c|}{$r_A$} & \multicolumn{1}{c}{$r_B$} \\
%% \hline
%% % 5 &$(0, 1, 3)$&$( 2, 3, 1)$ {\bf (known code!)}\\ 
%% % 7 &$(0, 1, 6)$&$( 3, 1, 1)$ {\bf (known code!)}\\
%% 13 &$(0,  1,  6)$&$(  2,  3,  1)$ \\
%% 23 &$(0,  1, 18)$&$(  7,  4,  0)$ \\
%% \noalign{\hrule height0.8pt}
%% \end{tabular}
%% }
%% \end{center}
%% \end{table}
%%%%%%%%%%%%%%%%%%%%%%%%%%%%%%%%%%%%%%%%%%%%%%%%

%%%%%%%%%%%%%%%%%%%%%%%%%%%%%%%%%
\subsection{Frames of $D_8^2$ and Type~I $\ZZ_k$-codes of length 16}

There is a unique extremal odd unimodular lattice
in dimension $16$, up to isomorphism
(see~\cite[Table~16.7]{SPLAG}),
where the lattice is denoted by $D_8^2$.
There is a unique binary extremal Type~I code of length $16$,
up to equivalence~\cite{Pless72b},
where the code is denoted by $F_{16}$ in~\cite[Table~2]{Pless72b}.
It is known that $D_8^2 \cong A_2(F_{16})$.
Hence, by Lemma~\ref{lem:key}, it is sufficient to 
investigate the existence of
a $7$-frame in $D_8^2$.

Let $C_{7,16}$ be the $\ZZ_7$-code with generator 
matrix of the form (\ref{eq:GM}),
where the first rows $r_A$ and $r_B$ of $A$ and $B$
%% are $r_A=(3, 3, 2, 1)$ and $r_B=(2, 0, 0, 0)$, respectively.
are as follows:
\[
r_A=(0, 0, 1, 1 ) \text{ and } r_B=(1, 3, 1, 0),  
\]
respectively.
Since $AA^T+BB^T=-I_{4}$,
%% and the Euclidean weights of all rows of the 
%% generator matrix are divisible by ,
$C_{7,16}$ is Type~I\@.
We have verified by {\sc Magma} that
$A_7(C_{7,16}) \cong D_8^2$.
Hence, combining with Lemma~\ref{lem:key}, we have the following:

\begin{thm}\label{thm:D82}
% $D_8^2$ contains a $k$-frame
% for every positive integer $k$ with $k \ge 2$.
$D_8^2$ contains a $k$-frame
if and only if $k$ is an integer with $k \ge 2$.
\end{thm}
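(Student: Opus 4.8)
The plan is to mirror the proof of Theorem~\ref{thm:D12} almost verbatim, since the statement for $D_8^2$ has exactly the same shape. The ``only if'' direction is vacuous: every unimodular lattice trivially contains a $1$-frame? No---$k\ge 2$ is part of our standing assumption on $\ZZ_k$-codes, and for $k\ge 2$ there is nothing to prove in that direction. So the entire content is the ``if'' direction: for every integer $k\ge 2$, the lattice $D_8^2$ contains a $k$-frame.

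First I would invoke Lemma~\ref{lem:key} applied to the row $A_4(C_{16,4}(P_8))$ of Table~\ref{Tab:L}. That lemma, together with the fact (recorded just before the theorem) that $D_8^2\cong A_2(F_{16})$ is itself obtained by Construction~A from a binary self-dual code, reduces the problem to finitely many primes. Concretely, the condition $(\star)$ for $A_4(C_{16,4}(P_8))$ excludes only $k$ of the form $2^{m_1}7^{m_2}$, and by Lemma~\ref{lem:frame} (dimension $16$ is divisible by $4$) it suffices to exhibit a $k$-frame in $D_8^2$ for each prime dividing such a $k$, i.e.\ for $k=2$ and $k=7$. The case $k=2$ is handled by the known isomorphism $D_8^2\cong A_2(F_{16})$. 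Thus the only remaining case is $k=7$, exactly as stated in the paragraph preceding the theorem.

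Second, for $k=7$ I would use the explicitly constructed code $C_{7,16}$: it has a generator matrix of the form (\ref{eq:GM}) with the negacirculant blocks $A,B$ determined by the first rows $r_A=(0,0,1,1)$ and $r_B=(1,3,1,0)$. One checks $AA^T+BB^T=-I_4$ over $\ZZ_7$, which by the remark following (\ref{eq:GM}) makes $C_{7,16}$ self-dual; a direct inspection of Euclidean weights shows it is not Type~II (indeed Type~II $\ZZ_k$-codes require $k$ even), so $C_{7,16}$ is Type~I. Then the isomorphism $A_7(C_{7,16})\cong D_8^2$, verified by {\sc Magma}, shows via the equivalence ``$L$ contains a $k$-frame iff $L\cong A_k(C)$ for some self-dual $\ZZ_k$-code $C$'' (stated in Section~\ref{sec:Pre}) that $D_8^2$ contains a $7$-frame.

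Combining the three cases $k=2$, $k=7$, and all $k$ covered by condition $(\star)$ of Lemma~\ref{lem:key}, every integer $k\ge 2$ is accounted for, which proves the ``if'' direction and hence the theorem. The only genuine obstacle is the $k=7$ case---finding the right negacirculant data $r_A,r_B$ so that the resulting Construction~A lattice is isomorphic to $D_8^2$ rather than to some other odd unimodular lattice of dimension $16$ (there is more than one)---but this is precisely the kind of finite search the cited {\sc Magma} computation settles, and the witness is already recorded. No delicate estimates or theta-series arguments are needed here, in contrast to the lemmas in Section~\ref{sec:Pre}.
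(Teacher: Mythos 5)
Your proposal is correct and follows essentially the same route as the paper: Lemma~\ref{lem:key} applied to the row $A_4(C_{16,4}(P_8))$ of Table~\ref{Tab:L} (this lattice is $\cong D_8^2$ by the uniqueness of the extremal odd unimodular lattice in dimension $16$), together with the $2$-frame from $D_8^2\cong A_2(F_{16})$ and Lemma~\ref{lem:frame}, reduces everything to $k=7$, which is settled by the explicit code $C_{7,16}$ and the {\sc Magma} verification $A_7(C_{7,16})\cong D_8^2$. The only detail you leave implicit is that the identification $A_4(C_{16,4}(P_8))\cong D_8^2$ rests on that uniqueness statement, exactly as in the paper.
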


By Lemma~\ref{lem:LtoC}, we have the following:

\begin{cor}\label{cor:16}
There is an extremal Type~I $\ZZ_{k}$-code of
length $16$ 
if and only if $k$ is an integer with $k \ge 2$.
% for every positive integer $k$ with $k \ge 2$.
\end{cor}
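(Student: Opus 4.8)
The plan is to reduce the corollary to Theorem~\ref{thm:D82} via Lemma~\ref{lem:LtoC}, and then handle the one remaining direction directly. First I would observe that for a self-dual $\ZZ_k$-code of length $16$, the extremality bound~(\ref{eq:nbound}) reads $d_E(C) \le 2k\lfloor 16/24\rfloor + 2k = 2k$ (the exceptional lines all require $n \in \{22,23,46,47\}$, so none apply). Since the unique extremal odd unimodular lattice $D_8^2$ in dimension $16$ has $\min(D_8^2) = 2$, a $k$-frame in $D_8^2$ yields, by Lemma~\ref{lem:LtoC}, a self-dual $\ZZ_k$-code $C$ with $d_E(C) \ge k\min(D_8^2) = 2k$, hence $d_E(C) = 2k$ exactly; such a $C$ is extremal, and it is Type~I because an extremal Type~II code of this length would force $A_k(C)$ to be an even unimodular lattice of minimum norm $2$, whereas $A_k(C) \cong D_8^2$ is odd. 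Combined with Theorem~\ref{thm:D82}, which asserts that $D_8^2$ contains a $k$-frame for every integer $k \ge 2$, this establishes the existence of an extremal Type~I $\ZZ_k$-code of length $16$ for every $k \ge 2$.

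For the converse, I would note simply that there is no self-dual $\ZZ_k$-code for $k = 1$ in the sense used here (the paper assumes $k \ge 2$ throughout), so "$k$ an integer with $k \ge 2$" is exactly the range in which the statement can be phrased; the only content is the forward direction just described. One should also double-check that $d_E(C) = 2k > 0$ for all $k \ge 2$ and that the $\ZZ_2$ case is consistent with the known binary extremal Type~I code $F_{16}$, which indeed has minimum weight $4 = 2 \cdot 2$; this serves as a sanity check rather than a separate argument.

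I do not anticipate a serious obstacle: the corollary is a formal consequence of Theorem~\ref{thm:D82} together with the bookkeeping already set up in Section~\ref{sec:Pre}. The only point requiring a little care is confirming that the code produced by Lemma~\ref{lem:LtoC} is genuinely Type~I and not Type~II — but this follows immediately from the oddness of $D_8^2$ via the criterion that $A_{2k}(C)$ is even if and only if $C$ is Type~II, so for even $k$ the resulting code cannot be Type~II, and for odd $k$ the notion of Type~II does not arise. Thus the proof is short:

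\begin{proof}
By Theorem~\ref{thm:D82}, $D_8^2$ contains a $k$-frame for every integer $k \ge 2$. By Lemma~\ref{lem:LtoC}, there is a self-dual $\ZZ_k$-code $C$ of length $16$ with $d_E(C) \ge k\min(D_8^2) = 2k$. Since, by the bound~(\ref{eq:nbound}), every self-dual $\ZZ_k$-code of length $16$ satisfies $d_E(C) \le 2k$, we have $d_E(C) = 2k$, so $C$ is extremal. Moreover $A_k(C) \cong D_8^2$ is odd, so $C$ is not Type~II; hence $C$ is an extremal Type~I $\ZZ_k$-code. Since self-dual $\ZZ_k$-codes are considered only for $k \ge 2$, this completes the proof.
\end{proof}
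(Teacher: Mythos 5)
Your proposal is correct and follows the same route as the paper: the paper derives the corollary directly from Theorem~\ref{thm:D82} via Lemma~\ref{lem:LtoC}, with the bound~(\ref{eq:nbound}) giving $d_E(C)\le 2k$ for $n=16$ and the oddness of $D_8^2$ ensuring the code is Type~I, exactly as you spell out. Your write-up merely makes explicit the bookkeeping (equality $d_E(C)=2k$, the Type~I check, and the trivial $k\ge 2$ convention) that the paper leaves implicit.
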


% By Lemma \ref{lem:T}, we have the following:
% 
% \begin{cor}
% Let $\sum_{m=0}^{\infty}A_{m}q^m$ denote the theta series
% of $D_8^2$.
% Then $A_{k} \ge 32$ for every positive integer $k$ with $k \ge 2$.
% \end{cor}

%%%%%%%%%%%%%%%%%%%%%%%%%%%%%%%%%
\subsection{Frames of $D_4^5, A_5^4, D_{20}$ and
Type~I $\ZZ_k$-codes of length 20}

There are $12$ non-isomorphic extremal odd unimodular lattices in 
dimension $20$ (see~\cite[Table~2.2]{SPLAG}).
We denote the $i$th lattices $(i=1,11,12)$ in dimension $20$
in~\cite[Table~16.7]{SPLAG}, by $D_{20}$, $D_4^5$, $A_5^4$,
respectively.
We have verified by {\sc Magma} that 
$D_4^5 \cong A_3(C_{20,3}(D_{10}))$ in Table~\ref{Tab:L},
$A_5^4 \cong A_3(C_{20,3}(D'_{10}))$ in Table~\ref{Tab:L} and
$D_{20} \cong A_5(C_{20,5}(D''_{10}))$ in Table~\ref{Tab:L}.
% We have verified 
% by {\sc Magma} that the odd unimodular lattices
% $A_3(C_{20,3}(D_{10}))$, $A_3(C_{20,3}(D'_{10}))$ and
% $A_5(C_{20,5}(D''_{10}))$
% in Table \ref{Tab:L} are isomorphic to
% the $i$th lattices $(i=11,12,1)$ in dimension $20$
% in~\cite[Table 16.7]{SPLAG}, where we denote the 
% lattices by $D_4^5$, $A_5^4$ and $D_{20}$, respectively.
By Lemma~\ref{lem:key}, it is sufficient to 
investigate the existence of
a $k$-frame in $D_4^5$ and $A_5^4$ for $k=2,5,7,13,23$, and 
a $k$-frame in $D_{20}$ for $k=2, 3,7,11,19,29$.

%%%%%%%%%%%%%%%%%%%%%%%%%%%%%%%%%%%%%%%%%%%%%%%%
\begin{table}[thb]
\caption{Extremal Type~I $\ZZ_{k}$-codes of length $20$}
\label{Tab:20}
\begin{center}
%{\small
{\footnotesize
%{\scriptsize
\begin{tabular}{c|l|l|c|l|l}
\noalign{\hrule height0.8pt}
Code& \multicolumn{1}{c|}{$r_A$} & \multicolumn{1}{c|}{$r_B$} &
Code& \multicolumn{1}{c|}{$r_A$} & \multicolumn{1}{c}{$r_B$} \\
\hline
$C_{ 5,20}$ &$(0, 0, 0, 1, 1)$ & $( 1, 4, 2, 1, 0)$ &
$C_{ 7,20}$ &$(0, 0, 0, 1, 6)$ & $( 3, 0, 1, 1, 0)$ \\
$C_{13,20}$ &$(0,  0,  0,  1,  1)$ & $( 10,  3,  2,  1,  0)$ &
$C_{23,20}$ &$(0,  0,  0,  1, 18)$ & $(  7,  4,  0,  0,  0)$ \\
\hline
$C'_{ 5,20}$ &$(0, 0, 0, 1, 4)$&$(3, 1, 4, 1, 0)$ &
$C'_{ 7,20}$ &$(0, 0, 0, 1, 5)$&$(1, 5, 3, 1, 0)$\\
$C'_{13,20}$ &$(0, 0, 0, 1, 4)$&$(4, 0, 3, 3, 0)$ &
$C'_{23,20}$ &$(0, 0, 0, 1,12)$&$(3, 5, 7, 1, 0)$\\
\hline
$C''_{ 7,20}$& $(0, 0, 0, 1,  4)$&$(  1,  3,  2, 3, 1)$ &
$C''_{ 9,20}$& $(0,0,0,1,3)$&$(1,2,4,2,6)$ \\
$C''_{11,20}$& $(0, 0, 0, 1,  8)$&$(  5,  6,  6, 3, 2)$ &
$C''_{19,20}$& $(0, 0, 0, 1, 12)$&$( 14, 12, 11, 1, 0)$ \\
$C''_{29,20}$& $(0, 0, 0, 1, 21)$&$(  7, 11, 16, 1, 0)$ & &&\\
\noalign{\hrule height0.8pt}
\end{tabular}
}
\end{center}
\end{table}
%%%%%%%%%%%%%%%%%%%%%%%%%%%%%%%%%%%%%%%%%%%%%%%%

%%%%%%%%%%%%%%%%%  Fig  %%%%%%%%%%%%%%%%%
\begin{figure}[thb]
\centering
{\small
\begin{tabular}{ll}
$
\left(\begin{array}{cc}
A & B_1+2B_2 \\
\end{array}\right)
=
\left(\begin{array}{cc}
11& 220113303 \\
00& 021012300 \\
10& 222120030 \\
01& 031321330 \\
01& 232220201 \\
11& 231021312 \\
00& 023031002 \\
10& 230133321 \\
11& 333130022 \\
\end{array}\right),
$
&
$
2D=
\left(\begin{array}{c}
202202200 \\
220022200
\end{array}\right)
$
\end{tabular}
\caption{A generator matrix of $C'_{4,20}$}
\label{Fig:20}
}
\end{figure}
%%%%%%%%%%%%%%%%%  Fig  %%%%%%%%%%%%%%%%%

There are $7$ binary extremal Type~I codes of length $20$,
up to equivalence~\cite{Pless72b}.
% One of them has automorphism group of order $2^{13}\cdot3\cdot5$
% and this code is denoted by $M_{20}$ in \cite[Table 2]{Pless72b}.
The unique code containing $5$ (resp.\ $45$) codewords of weight $4$ 
is denoted by $M_{20}$ (resp.\ $J_{20}$)
in~\cite[Table~2]{Pless72b}.
We have verified by {\sc Magma} that
$A_2(M_{20}) \cong D_4^5$ and $A_2(J_{20}) \cong D_{20}$.
%We have also verified by {\sc Magma} 
It is known 
that there is no binary Type~I code $C$ such that
$A_2(C) \cong A_5^4$.
%%%%%
There are $6$ inequivalent ternary self-dual codes of
length $20$ and minimum weight $6$~\cite{PSW}.
We have verified by {\sc Magma} that 
$L$ is an extremal odd unimodular lattice in dimension $20$
such that $L \cong A_3(C)$ for
some ternary self-dual code $C$
if and only if $L$ is isomorphic to $D_4^5$ or $A_5^4$.

Let $C_{k,20}$, $C'_{k,20}$ ($k=5,7,13,23$) and 
$C''_{k,20}$ ($k=7,9,11,19,29$)
be the $\ZZ_k$-codes with generator 
matrices of the form (\ref{eq:GM}),
where the first rows $r_A$ and $r_B$ of $A$ and $B$
are listed in Table~\ref{Tab:20}.
Since $AA^T+BB^T=-I_{5}$,
these codes are Type~I\@.
Let $C'_{4,20}$
be the $\ZZ_4$-code with generator matrix of
the following form:
\[
G_{20}=
\left(\begin{array}{ccc}
I_9 & A & B_1+2B_2 \\
O    &2I_2 & 2D \\
\end{array}\right),
\]
where we list in Figure~\ref{Fig:20}
the matrices
$\left(\begin{array}{cc}
A & B_1+2B_2 \\
\end{array}\right)$
and $2D$.
Here, $A$, $B_1$, $B_2$ and $D$ are $(1,0)$-matrices
and $O$ denotes the zero matrix of appropriate size.
It follows from $G_{20}G_{20}^T=O$ and $\# C'_{4,20}=4^{10}$
that $C'_{4,20}$  is self-dual.
The code $C'_{4,20}$ has been found by 
directly finding a $4$-frame in $A_5^4$ using {\sc Magma}.
In a similar way, some other (new) self-dual $\ZZ_4$-codes 
are also constructed in this paper.
%%%%%%%%%%%%%%%
We have verified by {\sc Magma} that
$A_k(C_{k,20}) \cong D_4^5$ ($k=5,7,13,23$),
$A_k(C'_{k,20}) \cong A_5^4$ ($k=4,5,7,13,23$) and
$A_k(C''_{k,20}) \cong D_{20}$ ($k=7,9,11,19,29$).
Hence, combining with Lemma~\ref{lem:key}, we have the following:

\begin{thm}\label{thm:20}
$D_4^5$ contains a $k$-frame 
if and only if $k$ is an integer with $k \ge 2$.
$A_5^4$ contains a $k$-frame 
if and only if $k$ is an integer with $k \ge 3$.
$D_{20}$ contains a $k$-frame 
if and only if $k$ is an integer with $k \ge 2$, $k \ne 3$.
% $D_4^5$ contains a $k$-frame for every positive integer $k$ with $k \ge 2$.
% $A_5^4$
% contains a $k$-frame for every positive integer $k$ with $k \ge 3$.
% $D_{20}$
% contains a $k$-frame for every positive integer $k$ with $k \ge 2$,
% $k \ne 3$.
\end{thm}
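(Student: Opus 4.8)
\textbf{Proof plan for Theorem~\ref{thm:20}.}
The plan is to prove the three biconditionals by, in each case, combining a general non-existence argument (for small $k$) with Lemma~\ref{lem:key} (for all but finitely many $k$) and the explicit codes tabulated above (for the remaining finitely many $k$). First I would dispose of the ``only if'' directions. For $D_4^5$: since $\min(D_4^5)=2$, the defining equations $(f_i,f_j)=k\delta_{ij}$ with $k=1$ would force $f_1,\dots,f_{20}$ to be an orthonormal (hence unimodular orthogonal) basis, so $D_4^5\cong\ZZ^{20}$, contradicting $\min(D_4^5)=2$; thus $k\ge 2$ is necessary, and $k\ge 2$ is also clearly necessary by our standing convention. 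For $A_5^4$ and $D_{20}$ the minimum norm is again $2$, so $k\ne 1$; and for $A_5^4$ I must additionally rule out $k=2$, while for $D_{20}$ I must rule out $k=2$ as well (wait---the statement says $D_{20}$ contains a $k$-frame iff $k\ge 2$, $k\ne 3$, so for $D_{20}$ it is $k=3$ that must be excluded, not $k=2$). So: for $A_5^4$, exclude $k=2$; for $D_{20}$, exclude $k=3$.

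For the excluded small cases I would argue as follows. If $A_5^4$ contained a $2$-frame, then by the correspondence between $k$-frames and Construction~A there would be a binary self-dual code $C$ of length $20$ with $A_2(C)\cong A_5^4$; since $\min(A_5^4)=2$, such a $C$ must have minimum Euclidean weight $\ge 4$, i.e.\ be a binary extremal Type~I code of length $20$ (Type~II being impossible in length $20$, which is not divisible by $8$). But there are exactly $7$ such codes up to equivalence (Pless), and one checks by {\sc Magma} that none of the resulting $A_2(C)$ is isomorphic to $A_5^4$ --- equivalently, I can invoke the cited fact that no binary Type~I code $C$ satisfies $A_2(C)\cong A_5^4$. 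Hence $A_5^4$ has no $2$-frame. Similarly, if $D_{20}$ contained a $3$-frame, there would be a ternary self-dual code $C$ of length $20$ with $A_3(C)\cong D_{20}$ and $\min$ Euclidean weight $\ge 6$; the ternary self-dual codes of length $20$ with minimum weight $6$ number $6$ up to equivalence (Pless--Sloane--Ward), and by the {\sc Magma} verification quoted above the lattices $A_3(C)$ arising from them are exactly $D_4^5$ and $A_5^4$, neither of which is $D_{20}$. So $D_{20}$ has no $3$-frame.

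For the ``if'' directions I would proceed uniformly. Lemma~\ref{lem:key} (via its case~(a) for $D_4^5$ and $A_5^4$, case~(c) for $D_{20}$) already gives a $k$-frame for every $k$ outside the stated finite exceptional sets of prime-power obstructions; and by Lemma~\ref{lem:frame} it suffices to realize a $p$-frame for each individual exceptional prime $p$, since then the $km$-frame for any $m$ follows. For $D_4^5$ the remaining primes are $p=2,5,7,13,23$; $p=2$ is handled by $A_2(M_{20})\cong D_4^5$, and $p=5,7,13,23$ by the codes $C_{p,20}$ of Table~\ref{Tab:20}, each self-dual because its generator matrix $(I_5\mid A\;B)$ satisfies $AA^T+BB^T=-I_5$, with $A_p(C_{p,20})\cong D_4^5$ checked by {\sc Magma}. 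For $A_5^4$ the remaining primes are $p=5,7,13,23$ (the value $2$ being genuinely impossible, as shown above, consistent with the bound $k\ge 3$), handled by the codes $C'_{p,20}$ and, for the composite $k=4$ needed to bootstrap, by the explicitly displayed self-dual $\ZZ_4$-code $C'_{4,20}$ of Figure~\ref{Fig:20} with $A_4(C'_{4,20})\cong A_5^4$. For $D_{20}$ the remaining primes are $p=2,7,11,19,29$; $p=2$ is handled by $A_2(J_{20})\cong D_{20}$ and $p=7,11,19,29$ by the codes $C''_{p,20}$, together with $C''_{9,20}$ if one wants the case $k=9$ directly. In each instance self-duality is the identity $AA^T+BB^T=-I_5$ (or $G_{20}G_{20}^T=O$ together with $\#C'_{4,20}=4^{10}$), and the lattice isomorphism is the asserted {\sc Magma} computation.

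The main obstacle is not conceptual but computational: one must actually exhibit, for each exceptional prime $p$ and each target lattice, a self-dual $\ZZ_p$-code whose Construction~A lattice is isomorphic to the prescribed extremal lattice, and verify the isomorphism. The negacirculant form~(\ref{eq:GM}) reduces the search space dramatically---self-duality becomes the single matrix equation $AA^T+BB^T=-I_5$ over $\ZZ_p$---but there is no guarantee in advance that a code in this restricted family hits the right isomorphism class, so the genuine work is the (machine-assisted) search recorded in Table~\ref{Tab:20} and Figure~\ref{Fig:20}, plus the lattice-isomorphism tests. The delicate point meriting care is the $k=2$ case for $A_5^4$ and the $k=3$ case for $D_{20}$: here one really does need the classification results (7 binary extremal Type~I codes of length $20$; 6 ternary self-dual $[20,10,6]$ codes) to be certain that no code in the \emph{entire} relevant family, not merely no negacirculant one, produces the lattice in question.
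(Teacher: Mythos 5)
Your proposal follows essentially the same route as the paper: identify the three lattices with $A_3(C_{20,3}(D_{10}))$, $A_3(C_{20,3}(D'_{10}))$, $A_5(C_{20,5}(D''_{10}))$, use Lemma~\ref{lem:key} for all $k$ outside the exceptional sets, use Lemma~\ref{lem:frame} plus the explicit codes of Table~\ref{Tab:20} and Figure~\ref{Fig:20} (and $A_2(M_{20})$, $A_2(J_{20})$) for the remaining values, and rule out a $2$-frame in $A_5^4$ and a $3$-frame in $D_{20}$ via the classifications of the $7$ binary extremal Type~I codes and the $6$ ternary self-dual $[20,10,6]$ codes. This is exactly what the paper does, and your explicit reduction of the $k=2$ and $k=3$ exclusions to those classifications is the same argument the paper invokes by citation.

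One point needs correcting, though it does not invalidate your proof as assembled: you describe $C''_{9,20}$ as something to include ``if one wants the case $k=9$ directly,'' and earlier you assert that it suffices to realize a $p$-frame for each exceptional prime $p$. That principle fails precisely for $D_{20}$: the exceptional set in case~(c) contains all $k=3^{m}$, no $3$-frame exists, and $9$ is not a multiple of $2,7,11,19$ or $29$, so the only way Lemma~\ref{lem:frame} reaches $k=3^{m}$ with $m\ge 2$ is through a $9$-frame. Hence $C''_{9,20}$ is not an optional convenience but an indispensable ingredient, playing for $D_{20}$ exactly the role that $C'_{4,20}$ plays for $A_5^4$ (where you did state the bootstrap correctly). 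With that adjustment your covering argument is complete and matches the paper's proof.
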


\begin{rem}
$D_{20}$ has theta series
$1 + 760 q^2 + 77560 q^4 + 524288 q^5 +\cdots$.
\end{rem}

By Lemma~\ref{lem:LtoC}, we have the following:

\begin{cor}\label{cor:20}
There is an extremal Type~I $\ZZ_{k}$-code of
length $20$ 
if and only if $k$ is an integer with $k \ge 2$.
% for every positive integer $k$ with $k \ge 2$.
\end{cor}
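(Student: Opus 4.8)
\textbf{Proof proposal for Corollary~\ref{cor:20}.}
The plan is to combine Theorem~\ref{thm:20} with Lemma~\ref{lem:LtoC} and then handle the small values of $k$ separately by exhibiting explicit codes. First I would recall from the bound~(\ref{eq:nbound}) that for length $n=20$, since $\lfloor 20/24 \rfloor = 0$, an extremal Type~I $\ZZ_k$-code of length $20$ is exactly one with $d_E(C) = 2k$; note also that $20 \not\equiv 22,23,46,47,48$, so no exceptional cases of~(\ref{eq:nbound}) arise. Now for any integer $k \ge 2$, Theorem~\ref{thm:20} tells us that at least one of the three extremal odd unimodular lattices $D_4^5$, $A_5^4$, $D_{20}$ in dimension $20$ (all with minimum norm $2$) contains a $k$-frame: indeed $D_4^5$ works for every $k \ge 2$. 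Applying Lemma~\ref{lem:LtoC} with $L = D_4^5$ and $\min(L) = 2$ yields a self-dual $\ZZ_k$-code $C$ with $d_E(C) \ge k \min(L) = 2k$. By~(\ref{eq:nbound}) we have $d_E(C) \le 2k$, so $d_E(C) = 2k$ and $C$ is extremal. It remains only to check that such a $C$ can be taken to be Type~I rather than Type~II; but $n=20$ is not divisible by $8$, so there is no Type~II $\ZZ_k$-code of length $20$ at all, and hence every self-dual $\ZZ_k$-code of length $20$ is automatically Type~I.

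Strictly speaking Lemma~\ref{lem:LtoC} as stated only guarantees $d_E(C) \ge k\min(L)$, so one must be slightly careful: the $k$-frame construction of Proposition~\ref{prop:const} actually produces $C$ as $C_{2n,k}(M)$ (or one of the explicitly tabulated codes $C_{k,20}$, $C'_{k,20}$, $C''_{k,20}$, $C'_{4,20}$), and the isomorphism $A_k(C) \cong L$ forces $\min\{k, d_E(C)/k\} = \min(L) = 2$. Since $k \ge 2$, this gives $d_E(C)/k \ge 2$, i.e. $d_E(C) \ge 2k$, and again $d_E(C) = 2k$ by the upper bound. So the extremal Type~I codes are precisely the ones already named in the construction of Theorem~\ref{thm:20}, together with (for the generic primes $p$) the codes $C_{2n,p}(M)$ arising from Lemma~\ref{lem:key}. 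For the converse direction there is nothing to prove: $k \ge 2$ is part of our standing assumption on $\ZZ_k$-codes.

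The only genuinely nontrivial point is that Theorem~\ref{thm:20} already did all the work — in particular the fact that $D_4^5$ admits a $k$-frame for \emph{every} $k \ge 2$ (handled via Lemma~\ref{lem:key} for the generic primes and via the explicit codes $C_{k,20}$ for $k = 5,7,13,23$, plus $A_2(M_{20}) \cong D_4^5$ for $k=2$). Given that, the present corollary is a direct consequence. I therefore expect no real obstacle here; the proof is a two-line deduction:

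\begin{proof}
Let $k$ be an integer with $k \ge 2$. By Theorem~\ref{thm:20}, the extremal odd unimodular lattice $D_4^5$ in dimension $20$ contains a $k$-frame, and $\min(D_4^5) = 2$. By Lemma~\ref{lem:LtoC}, there is a self-dual $\ZZ_k$-code $C$ of length $20$ with $d_E(C) \ge k \min(D_4^5) = 2k$. On the other hand, the bound~(\ref{eq:nbound}) gives $d_E(C) \le 2 k \lfloor 20/24 \rfloor + 2k = 2k$. Hence $d_E(C) = 2k$, so $C$ is extremal. Since $20$ is not divisible by $8$, there is no Type~II $\ZZ_k$-code of length $20$, so $C$ is necessarily Type~I. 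The converse is immediate since $k \ge 2$ by assumption.
\end{proof}
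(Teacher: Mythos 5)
Your proposal is correct and follows essentially the same route as the paper: Theorem~\ref{thm:20} (the lattice $D_4^5$ with $\min(D_4^5)=2$ contains a $k$-frame for every $k\ge 2$) combined with Lemma~\ref{lem:LtoC} and the bound~(\ref{eq:nbound}) gives $d_E(C)=2k$, and the Type~I claim is automatic since Type~II codes exist only in lengths divisible by $8$. The extra care you take about the exact form of Lemma~\ref{lem:LtoC} is unnecessary but harmless; the deduction is exactly the paper's.
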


% At dimension $20$, extremal odd unimodular lattices are divided into
% ten kinds of theta series (see \cite[Table 16.7]{SPLAG}).

% By Lemma \ref{lem:T}, we have the following:
% 
% \begin{cor}
% Let $\sum_{m=0}^{\infty}A_{m}q^m$ denote the theta series
% of $D_4^5$ or $A_5^4$.
% Then $A_{k} \ge 40$ for every positive integer $k$ with $k \ge 2$.
% Let $\sum_{m=0}^{\infty}A_{m}q^m$ denote the theta series
% of $D_{20}$.
% Then $A_{k} \ge 40$ for every positive integer $k \ge 2$, $k \ne 3$.
% \end{cor}
% 
% \begin{rem}
% $D_4^5$ and $A_5^4$ have the identical
% theta series 
% $1 + 120 q^2 + 5120 q^3 + 67320 q^4 + 503808q^5 + \cdots$, 
% and 
% $D_{20}$ has theta series
% $1 + 760 q^2 + 77560 q^4 + 524288 q^5 +\cdots$.
% \end{rem}

%%%%%%%%%%%%%%%%%%%%%%%%%%%%%%%%%
\subsection{Type~I $\ZZ_k$-codes of length 28}

There is no extremal odd unimodular lattice in dimension $28$
and the largest minimum norm among
odd unimodular lattices in dimension $28$ is $3$.
There are $38$ non-isomorphic optimal odd unimodular
lattices in dimension $28$~\cite{BV}.
In~\cite{BV}, the $38$ lattices are 
denoted by
${\mathbf{R}_{28,1}}(\emptyset)$,
${\mathbf{R}_{28,2}}(\emptyset),\ldots,
{\mathbf{R}_{28,36}}(\emptyset)$,
${\mathbf{R}_{28,37e}}(\emptyset)$,
${\mathbf{R}_{28,38e}}(\emptyset)$.
We have verified by {\sc Magma} that 
${\mathbf{R}_{28,32}}(\emptyset) \cong 
A_3(C_{28,3}(D_{14}))$ in Table~\ref{Tab:L}
and ${\mathbf{R}_{28,15}}(\emptyset) \cong 
A_5(C_{28,5}(D'_{14}))$ in Table~\ref{Tab:L}.
By Lemma~\ref{lem:key}, it is sufficient to 
investigate the existence of
a $k$-frame in ${\mathbf{R}_{28,32}}(\emptyset)$
for $k=4,5,7,13,23$ and 
a $k$-frame in ${\mathbf{R}_{28,15}}(\emptyset)$
for $k=3,4,17$. % , and
% a $k$-frame in ${\mathbf{R}_{28,34}}(\emptyset)$
% for $k=3,4,5,7,11,13,17,23,29,37$, $41,47,97$.

%%%%%%%%%%%%%%%%%%%%%%%%%%%%%%%%%%%%%%%%%%%%%%%%
\begin{table}[thb]
\caption{Near-extremal Type~I $\ZZ_{k}$-codes of length $28$}
\label{Tab:28}
\begin{center}
%{\small
{\footnotesize
%{\scriptsize
\begin{tabular}{c|l|l}
\noalign{\hrule height0.8pt}
Code & \multicolumn{1}{c|}{$r_A$} & \multicolumn{1}{c}{$r_B$} \\
\hline
$C_{ 5,28}$ &$(0,0,0,1,3,4,2)$ & $(3,1,2,0,3,4,0)$\\
$C_{ 7,28}$ &$(0,1,2,2,4,2,3)$ & $(2,2,4,0,4,1,2)$\\
$C_{13,28}$ &$(0, 0, 0, 1, 0, 9, 1)$ & $(5, 1, 3, 7, 7, 1, 4)$\\
$C_{23,28}$ &$(0, 0, 0, 1,12, 1, 1)$ & $(3,19, 7, 5,14,21,17)$\\
\hline
$C'_{17,28}$ & $( 0,0,0,1,13,14, 2)$&  $(10,1,1,9,16,11,15)$ \\
% 5 &$(0, 0, 0, 1, 4, 4, 3)$& $(2, 0, 3, 3, 2, 0, 1)$\\
% 7 &$(0, 0, 0, 1, 3, 6, 2)$& $(6, 5, 6, 0, 2, 6, 1)$\\
%13 &$(0, 0, 0, 1, 5, 4, 3)$& $(6, 3, 0, 9, 9, 0, 1)$\\
%23 &$(0, 0, 0, 1, 0,21, 7)$& $(3, 5,19,17,19, 0, 2)$\\
% \hline
% &{\bf remaining case} & \\
% $C''_{5 ,28}$& $(0, 0, 0, 1, 3, 2, 0)$&$(  2, 1, 1, 2, 4, 2, 0)$\\
% $C''_{7 ,28}$& $(0, 0, 0, 1, 5, 5, 6)$&$(  2, 5, 6, 3, 2, 6, 1)$\\
% $C''_{11,28}$& $(0, 0, 0, 1, 1, 1, 5)$&$(  4, 2, 3, 7, 8, 9,10)$\\
% $C''_{13,28}$& $(0, 0, 0, 1,10, 7, 7)$&$( 12, 3, 5, 4, 9,10, 7)$\\
% $C''_{17,28}$& $(0, 0, 0, 1, 6, 2, 4)$&$(  4, 6, 5, 1, 0, 4, 1)$\\
% $C''_{23,28}$& $(0, 0, 0, 1, 5,20,20)$&$(  8,10,19, 6,22,10,12)$\\
% $C''_{29,28}$& $(0, 0, 0, 1,23,27, 3)$&$( 22,19,16,24, 3, 6,24)$\\
% $C''_{37,28}$& $(0, 0, 0, 1,29,17, 3)$&$( 33,30, 8,33,36, 6,34)$\\
% $C''_{41,28}$& $(0, 0, 0, 1,34,17,24)$&$( 35,15,39, 4,33,21,26)$\\
% $C''_{47,28}$& $(0, 0, 0, 1,36,14, 5)$&$(  6, 9,45, 1,14,32,37)$\\
% $C''_{97,28}$& $(1,67,26,59,28,91,76)$&$( 63,45,64,79,94,43,56)$\\
\noalign{\hrule height0.8pt}
\end{tabular}
}
\end{center}
\end{table}
%%%%%%%%%%%%%%%%%%%%%%%%%%%%%%%%%%%%%%%%%%%%%%%%

Let $C_{k,28}$ ($k=5,7,13,23$) and $C'_{17,28}$
% $C''_{k,28}$ ($k=5,7,11,13,17,23,29$, $37$, $41$, $47$, $97$)
be the $\ZZ_k$-codes with generator 
matrices of the form (\ref{eq:GM}),
where the first rows $r_A$ and $r_B$ of $A$ and $B$
are listed in Table~\ref{Tab:28}.
Since $AA^T+BB^T=-I_{7}$,
these codes are Type~I\@.
Let 
$C_{4,28}$ and $C'_{4,28}$  % and $C''_{4,28}$
be the $\ZZ_4$-codes with generator matrices of
the following form:
\[
\left(\begin{array}{ccc}
I_{13} & A & B_1+2B_2 \\
O    &2I_{2} & 2D \\
\end{array}\right),
\]
where we list in Figure~\ref{Fig}
the matrices
$
\left(\begin{array}{cc}
A & B_1+2B_2 \\
2I_{2} & 2D \\
\end{array}\right).
$
Then these codes are Type~I\@.
For $k=4,5,7,13,23$,
we have verified by {\sc Magma} that
$A_k(C_{k,28}) \cong {\mathbf{R}_{28,32}}(\emptyset)$.
%%%%
For $k=4,17$,
we have verified by {\sc Magma} that
$A_k(C'_{k,28}) \cong {\mathbf{R}_{28,15}}(\emptyset)$.
% For $k=4,5,7,11,13,17,23,29,37,41$, $47$, $97$, 
% we have verified by {\sc Magma} that
% $A_k(C''_{k,28})$ is isomorphic to 
% ${\mathbf{R}_{28,34}}(\emptyset)$ ({\bf remaining case}).
It is known that
${\mathbf{R}_{28,15}}(\emptyset)$ contains a $3$-frame
% ${\mathbf{R}_{28,34}}(\emptyset)$ contains no $3$-frame 
(see~\cite{HMV} for a classification of $3$-frames in 
the $38$ lattices).
%%%%
Hence, combining with Lemma~\ref{lem:key}, we have the following:

\begin{thm}\label{thm:28}
${\mathbf{R}_{28,i}}(\emptyset)$ $(i=15,32)$
contains a $k$-frame 
if and only if $k$ is an integer with $k \ge 3$.
%${\mathbf{R}_{28,15}}(\emptyset)$  contains a $k$-frame 
%if and only if $k$ is a positive integer with $k \ge 3$.
% The following are equivalent:
% \begin{enumerate}
% \item
% ${\mathbf{R}_{28,32}}(\emptyset)$ contains a $k$-frame.
% \item
% ${\mathbf{R}_{28,15}}(\emptyset)$  contains a $k$-frame.
% \item
% $k$ is a positive integer with $k \ge 3$.
% \end{enumerate}
%%%%%%%%%%%%%
% ${\mathbf{R}_{28,34}}(\emptyset)$ contains a $k$-frame 
% if and only if $k$ is a positive integer with $k \ge 4$
% ({\bf remaining case}).
% ${\mathbf{R}_{28,32}}(\emptyset)$ and ${\mathbf{R}_{28,15}}(\emptyset)$ 
% contain a $k$-frame 
% for every positive integer $k$ with $k \ge 3$.
% ${\mathbf{R}_{28,34}}(\emptyset)$ contains a $k$-frame 
% for every positive integer $k$ with $k \ge 4$.
\end{thm}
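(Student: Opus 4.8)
The plan is to reduce the ``if and only if'' statement to finitely many explicit verifications, using Lemma~\ref{lem:key} to cover all but a bounded set of moduli. The ``only if'' direction is immediate: by Lemma~\ref{lem:LtoC} (or directly from $\min(A_k(C))=\min\{k,d_E(C)/k\}$), if ${\mathbf{R}_{28,i}}(\emptyset)$ contains a $k$-frame then there is a self-dual $\ZZ_k$-code $C$ of length $28$ with $d_E(C)\ge 3k$; but a $2$-frame would give a binary self-dual code of length $28$ with $d_E(C)\ge 6$, i.e.\ minimum weight $\ge 6$, and no such code exists (the largest minimum weight of a binary self-dual code of length $28$ is $4$). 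Hence $k\ge 3$ is necessary. Alternatively, one notes $\min({\mathbf{R}_{28,i}}(\emptyset))=3$, so a $2$-frame is geometrically impossible since its vectors have norm $2$ but no lattice vector attains that norm.

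For the ``if'' direction, I would invoke Lemma~\ref{lem:key} applied to the two rows of Table~\ref{Tab:L} corresponding to $A_3(C_{28,3}(D_{14}))$ and $A_5(C_{28,5}(D'_{14}))$, which by the {\sc Magma} isomorphisms recorded just before the theorem are ${\mathbf{R}_{28,32}}(\emptyset)$ and ${\mathbf{R}_{28,15}}(\emptyset)$ respectively. For $i=32$ this gives a $k$-frame for every $k\ge 3$ with $k\ne 2^{m_1}5^{m_2}7^{m_3}13^{m_4}23^{m_5}$, and for $i=15$ for every $k\ge 3$ with $k\ne 2^{m_1}3^{m_2}17^{m_3}$. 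By Lemma~\ref{lem:frame} (recall $n=28$ is divisible by $4$), it then suffices to exhibit, for each lattice, a single prime frame for each of the remaining primes: for ${\mathbf{R}_{28,32}}(\emptyset)$ the primes $p=3,5,7,13,23$, and for ${\mathbf{R}_{28,15}}(\emptyset)$ the primes $p=3,17$. The $3$-frame in ${\mathbf{R}_{28,15}}(\emptyset)$ is already known from the classification of $3$-frames in the $38$ optimal lattices of dimension $28$ in~\cite{HMV}; the $5,7,13,23$-frames in ${\mathbf{R}_{28,32}}(\emptyset)$ and the $17$-frame in ${\mathbf{R}_{28,15}}(\emptyset)$ are supplied by the explicit codes $C_{k,28}$ and $C'_{17,28}$ of Table~\ref{Tab:28}: each has generator matrix of the form~(\ref{eq:GM}) with $AA^T+BB^T=-I_7$, hence is a (Type~I) self-dual $\ZZ_k$-code, and {\sc Magma} verifies $A_k(C_{k,28})\cong{\mathbf{R}_{28,32}}(\emptyset)$ and $A_{17}(C'_{17,28})\cong{\mathbf{R}_{28,15}}(\emptyset)$, which by the criterion ``$L$ has a $k$-frame iff $L\cong A_k(C)$ for some self-dual $\ZZ_k$-code $C$'' produces the desired frames. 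The prime $3$ for ${\mathbf{R}_{28,32}}(\emptyset)$ is covered because $A_3(C_{28,3}(D_{14}))\cong{\mathbf{R}_{28,32}}(\emptyset)$ by construction.

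The only genuinely delicate point is that $d_E(C_{k,28})$ must actually be large enough: Construction~A gives $\min(A_k(C_{k,28}))=\min\{k,d_E(C_{k,28})/k\}$, and for the isomorphism with a norm-$3$ lattice to hold we need $d_E(C_{k,28})\ge 3k$ (so these are exactly the near-extremal Type~I $\ZZ_k$-codes of the subsection title, since the bound~(\ref{eq:nbound}) gives $d_E\le 4k$ with no extremal code existing). This, together with the isomorphism type, is precisely what the {\sc Magma} computation checks, so there is no separate obstacle beyond trusting those computations. The main ``work'' of the proof is thus bookkeeping: matching the two table entries to the two named lattices, listing the finitely many exceptional primes left by Lemma~\ref{lem:key}, and pointing to the explicit codes (and to~\cite{HMV} for the $3$-frame) that dispatch them; combining everything yields that ${\mathbf{R}_{28,i}}(\emptyset)$ contains a $k$-frame precisely when $k\ge 3$.
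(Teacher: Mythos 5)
Your reduction of the ``if'' direction has a genuine gap: after applying Lemma~\ref{lem:key}, the exceptional values for ${\mathbf{R}_{28,32}}(\emptyset)$ are \emph{all} integers $k\ge 3$ of the form $2^{m_1}5^{m_2}7^{m_3}13^{m_4}23^{m_5}$, and for ${\mathbf{R}_{28,15}}(\emptyset)$ all $k\ge 3$ of the form $2^{m_1}3^{m_2}17^{m_3}$. These sets contain the pure powers of two $4,8,16,\ldots$, which are divisible by none of the primes on your lists; and since $\min({\mathbf{R}_{28,i}}(\emptyset))=3$ rules out a $2$-frame, Lemma~\ref{lem:frame} cannot reach them from any frame you exhibit. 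So your claim that ``a single prime frame for each of the remaining primes'' suffices is false: you must additionally produce a $4$-frame in each of the two lattices, after which $4m$-frames (hence all powers of two at least $4$) follow from Lemma~\ref{lem:frame}. This is precisely why the paper's reduction list reads $k=4,5,7,13,23$ for ${\mathbf{R}_{28,32}}(\emptyset)$ and $k=3,4,17$ for ${\mathbf{R}_{28,15}}(\emptyset)$, and why it constructs the explicit self-dual $\ZZ_4$-codes $C_{4,28}$ and $C'_{4,28}$ (Figure~\ref{Fig}) with $A_4(C_{4,28})\cong{\mathbf{R}_{28,32}}(\emptyset)$ and $A_4(C'_{4,28})\cong{\mathbf{R}_{28,15}}(\emptyset)$. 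Apart from this omission your argument is the paper's: the same identifications with the Table~\ref{Tab:L} lattices, the same codes $C_{k,28}$ ($k=5,7,13,23$) and $C'_{17,28}$ from Table~\ref{Tab:28}, the appeal to~\cite{HMV} for the $3$-frame in ${\mathbf{R}_{28,15}}(\emptyset)$, and the $3$-frame in ${\mathbf{R}_{28,32}}(\emptyset)$ coming from its construction as $A_3(C_{28,3}(D_{14}))$.

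A smaller but real error: in your first ``only if'' argument you assert that no binary self-dual code of length $28$ has minimum weight $6$ (claiming the maximum is $4$). That is false; the paper itself uses the three inequivalent binary Type~I $[28,14,6]$ codes of~\cite{CPS} to obtain the near-extremal $\ZZ_2$-codes in Corollary~\ref{cor:28}. That version of the argument therefore proves nothing. Fortunately your alternative argument is the right one and is all that is needed: a $2$-frame consists of vectors of norm $2$, while $\min({\mathbf{R}_{28,i}}(\emptyset))=3$, so no $2$-frame exists and $k\ge 3$ is necessary.
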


\begin{lem}
Let $C$ be a Type~I $\ZZ_k$-code of length $28$.
Then $d_E(C) \le 3k$.
\end{lem}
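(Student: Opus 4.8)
The plan is to bound $d_E(C)$ by combining the Euclidean-weight divisibility constraint with the known structure theory for odd unimodular lattices in dimension $28$ via Construction~A. First I would record the cheap cases. By the bound~\eqref{eq:kbound}, the statement already holds for $k=2,3$, so assume $k\ge 4$; and if $k\ge 5$ then $k< d_E(C)/k$ would be forced only after we rule out small values, so more care is needed. Since the Euclidean weight of every codeword is divisible by $k$, if $d_E(C)\ge 4k$ then $\min(A_k(C))=\min\{k,\,d_E(C)/k\}\ge 4$; but the largest minimum norm among odd unimodular lattices in dimension $28$ is $3$, so $A_k(C)$ would have to be an \emph{even} unimodular lattice, i.e. $C$ would be Type~II. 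For $k$ odd this is already a contradiction (odd-modulus Construction~A lattices containing an odd-norm vector cannot all have even norm once $k\nmid$ every coordinate square—more simply, one checks $A_k(C)$ odd for $k$ odd directly), so the argument there reduces to ruling out $d_E(C)=4k$.

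The main case is therefore $k$ even with $d_E(C)\ge 4k$, where the obstruction is genuinely about theta series. Here $\min(A_4(C))=4$ if $k=4$, and in general $A_k(C)$ is an even unimodular lattice of dimension $28$ with $\min=4$ and no vector of norm~$5$. I would invoke the shadow formalism from Section~\ref{sec:2U} together with~\cite[(2) and (3)]{CS-odd}: write the possible theta series $\theta_{A_k(C)}(q)=1+N_4q^4+\cdots$ and $\theta_S(q)$ with an integer parameter $\alpha$, exactly as in the length-$48$ lemma already proved in the excerpt. The kissing-number condition $N_4=96$ (forced because a $k$-frame-free hypothesis is not available, but $\min=4$ with $d_E\ge 4k$ forces the norm-$4$ vectors to come only from the all-$\pm k$-free codewords) pins down $\alpha$ from the low-order coefficients of $\theta_S$, and one reads off that $\alpha$ must take a value making $\theta_S$ have a negative coefficient or making $A_k(C)$ even—contradiction in the odd case. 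I expect the coefficient bookkeeping to be the fussiest part but entirely parallel to the $n=48$, $k=4$ computation already done.

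The hard part will be handling $k=4$ cleanly: there one cannot appeal to $k<d_E(C)/k$, and $A_4(C)$ genuinely is a dimension-$28$ odd unimodular lattice with $\min=4>3$ unless it is even. So the real content is that \emph{no} odd unimodular lattice in dimension $28$ has minimum norm $\ge 4$ (the optimal value is $3$, as recalled from~\cite{BV}), which immediately forces $A_4(C)$ even and hence $C$ Type~II, contradicting the Type~I hypothesis. For $k\ge 6$ even the same structural fact plus~\cite{Gaulter} closes it, and for $k=5$ one uses that an odd unimodular lattice of minimum norm~$5$ cannot exist in dimension $28$ either (the bound gives $\min\le 3$). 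Thus the proof is: reduce to $k\ge 4$ and $d_E(C)\ge 4k$ by~\eqref{eq:kbound}; observe $\min(A_k(C))\ge 4$; quote that every odd unimodular lattice in dimension $28$ has minimum norm at most $3$; conclude $A_k(C)$ is even, so $C$ is Type~II, contradiction.
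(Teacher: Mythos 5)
Your closing summary---reduce to $k\ge 4$, assume $d_E(C)\ge 4k$, deduce $\min(A_k(C))=\min\{k,d_E(C)/k\}\ge 4$, and contradict the fact that odd unimodular lattices in dimension $28$ have minimum norm at most $3$---is exactly the paper's argument (the paper leaves the evenness point implicit: since $8\nmid 28$ there is no even unimodular lattice in dimension $28$ at all, so one need not route through ``$A_k(C)$ even $\Rightarrow C$ Type~II''). But there is a genuine gap at $k=2$. You assert that the bound \eqref{eq:kbound} already gives the statement for $k=2,3$; for $k=3$ it does ($3\lfloor 28/12\rfloor+3=9=3k$), but for $k=2$ it only gives $d_E(C)\le 4\lfloor 28/24\rfloor+4=8>6$. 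The lattice argument cannot rescue this case: if a binary self-dual code of length $28$ had $d_E(C)=8$, then $\min(A_2(C))=\min\{2,4\}=2$, which contradicts nothing. The $k=2$ case genuinely requires the known coding-theoretic fact that binary self-dual codes of length $28$ have minimum weight at most $6$, which the paper supplies by citing \cite{CPS} (and \cite{MPS} for the ternary case).

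Separately, your middle paragraph rests on a false premise: you speak of $A_k(C)$ as ``an even unimodular lattice of dimension $28$'' with $\min=4$ and kissing number $96$, and propose a shadow theta-series computation parallel to the length-$48$ lemma. No even unimodular lattice exists in dimension $28$ (the dimension must be divisible by $8$), and the kissing-number claim is unjustified; none of that machinery is needed, and your final summary rightly discards it. Your handling of odd $k$ (the frame vector of norm $k$ shows $A_k(C)$ is odd) is fine, though again superfluous given the dimension obstruction. Once the $k=2$ case is patched with the citation above, your argument is correct and coincides with the paper's proof.
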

\begin{proof}
As described above,
the largest minimum norm among
odd unimodular lattices in dimension $28$ is $3$.
Since $d_E(C) \le 3k$ for $k=2,3$ (see~\cite{CPS, MPS}),
it is sufficient to consider the cases $k \ge 4$ only.
Assume that $d_E(C)  \ge 4k$.
Since $\min(A_{k}(C))=\min\{k, d_{E}(C)/k\}$,
$\min(A_{k}(C)) \ge 4$, which is a contradiction.
%For $k=2,3$, it follows that $d_E(C) \le 3k$ (see~\cite{CPS, MPS}).
\end{proof}

% By the above lemma, there is no extremal Type~I $\ZZ_k$-code
% of length $28$ for every positive integer $k$ with $k \ge 2$.
There are three inequivalent binary Type~I codes of length $28$
and minimum weight $6$~\cite{CPS}.
Hence, by Lemma~\ref{lem:LtoC}, we have the following:

\begin{cor}\label{cor:28}
% There is no extremal Type I $\ZZ_{k}$-code of
% length $28$ for every positive integer $k$ with $k \ge 2$.
There is  a near-extremal Type~I $\ZZ_{k}$-code of
length $28$ 
if and only if $k$ is an integer with $k \ge 2$.
% for every positive integer $k$ with $k \ge 2$.
\end{cor}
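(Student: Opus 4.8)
The plan is to combine the existence of $k$-frames in the optimal lattices ${\mathbf{R}_{28,i}}(\emptyset)$ $(i=15,32)$ established in Theorem~\ref{thm:28} with the preceding lemma bounding $d_E(C)$ for Type~I $\ZZ_k$-codes of length~$28$, together with a known supply of binary Type~I codes at the low end $k=2$. First I would dispose of the "only if" direction: by Lemma~\ref{lem:frame} (taking $n=28$, which is divisible by $4$), if some unimodular lattice in dimension $28$ contained a $1$-frame it would be isomorphic to $\ZZ^{28}$, which has minimum norm $1$, not $3$; so no optimal odd unimodular lattice of dimension $28$ admits a $k$-frame for $k=1$. Hence a near-extremal (equivalently, here, $d_E = 3k$) Type~I $\ZZ_k$-code of length $28$ forces $k \ge 2$, since for $k=1$ there is no code to speak of and the relevant lattice simply does not exist in the required form.

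For the "if" direction with $k \ge 3$, I would argue as follows. By Theorem~\ref{thm:28}, each of ${\mathbf{R}_{28,15}}(\emptyset)$ and ${\mathbf{R}_{28,32}}(\emptyset)$ contains a $k$-frame for every integer $k \ge 3$; fix one such lattice $L$, with $\min(L)=3$. Lemma~\ref{lem:LtoC} then yields a self-dual $\ZZ_k$-code $C$ with $d_E(C) \ge k\min(L) = 3k$. On the other hand, the lemma just proved gives $d_E(C) \le 3k$ whenever $C$ is Type~I of length $28$; and $C$ is necessarily Type~I because Construction~A from a Type~II code produces an even lattice, whereas $L$ is odd (its minimum norm $3$ is odd, witnessing an odd vector). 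Combining, $d_E(C)=3k$. Since the bound~(\ref{eq:nbound}) for $n=28$, $k\ge 3$ reads $d_E(C) \le 2k\lfloor 28/24\rfloor + 2k = 4k$, a code with $d_E=3k$ is precisely one for which $d_E(C)+k=4k$ meets the bound, i.e.\ it is near-extremal in the sense defined in Section~\ref{sec:Pre}. This handles all $k \ge 3$.

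It remains to treat $k=2$. Here I would invoke the three inequivalent binary Type~I codes of length $28$ and minimum (Hamming) weight $6$ from~\cite{CPS}: for a binary code the Euclidean weight of a codeword equals its Hamming weight, so any of these codes $C$ has $d_E(C)=6=3\cdot 2=3k$, and the bound~(\ref{eq:nbound}) for $n=28$, $k=2$ gives $d_E(C)\le 4\lfloor 28/24\rfloor + 4 = 8$, so $d_E(C)+k=8$ meets the bound and $C$ is near-extremal. (Equivalently one could observe that Construction~A applied to such a code gives an optimal odd unimodular lattice of dimension $28$ containing a $2$-frame, but this is not needed.) Putting the three cases together — no code for $k=1$, the binary Type~I codes for $k=2$, and the $k$-frames of Theorem~\ref{thm:28} together with Lemmas~\ref{lem:LtoC} and the length-$28$ bound for $k\ge 3$ — gives the stated equivalence.

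The main obstacle, and the only place where real work is hidden, is upstream: it lies entirely in Theorem~\ref{thm:28}, i.e.\ in producing the $k$-frames in ${\mathbf{R}_{28,15}}(\emptyset)$ and ${\mathbf{R}_{28,32}}(\emptyset)$ for all $k\ge 3$. That requires identifying these lattices as $A_k(C_{28,k}(M))$ for suitable matrices $M$ (here $D_{14}$ and $D'_{14}$ from Table~\ref{Tab:D}), applying Proposition~\ref{prop:const} together with the representation results of Lemma~\ref{lem:prime}(a),(d) and the multiplicativity of Lemma~\ref{lem:frame} to cover all but finitely many $k$, and then handling the finitely many exceptional $k \in \{3,4,5,7,13,17,23\}$ by the explicit small codes of Table~\ref{Tab:28} and Figure~\ref{Fig} (plus the known existence of a $3$-frame in ${\mathbf{R}_{28,15}}(\emptyset)$ from~\cite{HMV}), all verified by {\sc Magma}. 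Granting Theorem~\ref{thm:28}, the corollary itself is a short deduction: the bound, $A_k(\cdot)$ being odd, and the three binary codes for $k=2$ are all that is required.
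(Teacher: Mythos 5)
Your proposal is correct and follows essentially the same route as the paper: for $k\ge 3$ combine Theorem~\ref{thm:28} with Lemma~\ref{lem:LtoC} and the length-$28$ bound $d_E(C)\le 3k$ to pin $d_E(C)=3k$ (the code is automatically Type~I since the lattice is odd), and for $k=2$ invoke the three binary Type~I $[28,14,6]$ codes of~\cite{CPS}. You merely make explicit some steps the paper leaves implicit, so there is nothing to add.
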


% Since ${\mathbf{R}_{28,i}}(\emptyset)$ $(i=1,2,\ldots,36)$
% have the identical theta series,
% by Lemma \ref{lem:T}, we have the following:
% 
% \begin{cor}
% Let $\sum_{m=0}^{\infty}A_{m}q^m$ denote the theta series
% of ${\mathbf{R}_{28,i}}(\emptyset)$ $(i=1,2,\ldots,36)$.
% Then $A_{k} \ge 56$ for every positive integer 
% $k$ with $k \ge 3$.
% \end{cor}

%%%%%%%%%%%%%%%%%  Fig  %%%%%%%%%%%%%%%%%
\begin{figure}[thb]
\centering
%{\small
{\footnotesize
$
\left(\begin{array}{cc}
00&3221032113010\\
00&2312302202000\\
01&1011113132031\\
01&2021011201031\\
10&3033332032202\\
00&2220031132311\\
00&1130232110223\\
11&2213122020013\\
01&3200201111201\\
01&3133230220230\\
10&3111000202123\\
10&3011332120200\\
10&3331011112112 \\
20&2220022000000 \\
02&0022222000000
\end{array}\right) \text{ and }
\left(\begin{array}{cc}
01& 1023301203302\\
01& 1022200000021\\
01& 1130203022312\\
11& 1202303012212\\
00& 2321232113032\\
01& 0002112332213\\
11& 1113323310300\\
00& 3321000023111\\
00& 1210231221321\\
11& 2012013002211\\
11& 1010001123020\\
11& 2203101320001\\
00& 3302011030033\\
20& 0002202020200 \\
02& 2220222202200
\end{array}\right) % ,
% \left(\begin{array}{cc}
% 01& 0202013303211\\
% 10& 1131223313132\\
% 00& 2300332001331\\
% 10& 1120323222310\\
% 10& 1220231112232\\
% 01& 0230123000111\\
% 00& 0030112112213\\
% 01& 2221100031233\\
% 01& 3313200233022\\
% 10& 0002313122303\\
% 00& 1233101232030\\
% 00& 2020101311312\\
% 11& 2301101321333\\
% 20& 2002222000000\\
% 02& 2220202000000
% \end{array}\right)
$
\caption{Generator matrices of $C_{4,28}$ and $C'_{4,28}$}
\label{Fig}
}
\end{figure}
%%%%%%%%%%%%%%%%%  Fig  %%%%%%%%%%%%%%%%%

%%%%%%%%%%%%%%%%%%%%%%%%%%%%%%%%%
\subsection{Type~I $\ZZ_k$-codes of length 32}
\label{subsec:32}
% \begin{rem}
% We have verified by {\sc Magma} that 
% $C_{4,32}$ has weight enumerator $1+8y^4+700y^8+\cdots$,
% and $A_4(C_{4,32})$
% has theta series
% $1 + 81344 q^4 + 2097152 q^5 + 32251904 q^6 +\cdots$.
% \end{rem}

There are $5$ non-isomorphic extremal odd unimodular lattices in 
dimension $32$, and 
these $5$ lattices are related to the $5$ inequivalent 
binary extremal Type~II codes of length $32$~\cite{CS-odd}.
The $5$ codes are denoted by 
$\text{C}81,\text{C}82,\ldots,\text{C}85$ 
in~\cite[Table~A]{CPS}.
We denote the extremal odd unimodular lattice
related to $\text{C}i$ by $L_{32,i}$ ($i=81,\ldots,85$).
We have verified 
by {\sc Magma} that $L_{32,82} \cong A_4(C_{32,4}(D_{16}))$  
in Table~\ref{Tab:L}.
Since $A_4(C_{32,4}(D_{16}))$ contains a $4$-frame,
it is sufficient to investigate the existence of
a $k$-frame in $L_{32,82}$ for $k=6,9$ by Lemma~\ref{lem:key}.

%%%%%%%%%%%%%%%%%%%%%%%%%%%%%%%%%%%%%%%%%%%%%%%%
\begin{table}[thb]
\caption{Extremal Type~I $\ZZ_{k}$-codes of length $32$}
\label{Tab:32}
\begin{center}
%{\small
{\footnotesize
%{\scriptsize
\begin{tabular}{c|l|l}
\noalign{\hrule height0.8pt}
Code & \multicolumn{1}{c|}{$r_A$} & \multicolumn{1}{c}{$r_B$} \\
\hline
$C_{6,32}$ & $(0,0,1,2,2,2,1,2)$ & $(1,0,5,5,1,1,3,3)$ \\
$C_{9,32}$ & $(0,0,1,5,0,6,0,1)$ & $(0,6,2,2,7,6,1,7)$ \\
\noalign{\hrule height0.8pt}
\end{tabular}
}
\end{center}
\end{table}
%%%%%%%%%%%%%%%%%%%%%%%%%%%%%%%%%%%%%%%%%%%%%%%%

For $k=6,9$, let $C_{k,32}$ be the
$\ZZ_k$-code  with generator matrix of the form (\ref{eq:GM}),
where the first rows $r_A$ and $r_B$ of $A$ and $B$
are listed in Table~\ref{Tab:32}.
% Let $C_{9,32}$ be the $\ZZ_9$-code with generator 
% matrix of the form (\ref{eq:GM}),
% where the first rows $r_A$ and $r_B$ of $A$ and $B$
% are as follows:
% \[
% r_A=(0,0,0,3,4,3,0,0) \text{ and }
% r_B=(1,8,2,5,4,0,4,1),
% \]
% respectively.
Since $AA^T+BB^T=-I_{8}$, these codes are self-dual.
Note that the code $C_{6,32}$ is not Type~II,
since $\wt_E(r_A)+\wt_E(r_B)=41$, 
where $\wt_E(x)$ denotes the Euclidean weight of $x$.
For $k=6,9$, we have verified by {\sc Magma} that
$A_k(C_{k,32}) \cong L_{32,82}$.
Hence, combining with Lemma~\ref{lem:key}, we have the following:

\begin{thm}\label{thm:32}
$L_{32,82}$ contains a $k$-frame
if and only if $k$ is an integer with $k \ge 4$.
% $L_{32,82}$ contains a $k$-frame
% for every positive integer $k$ with $k \ge 4$.
\end{thm}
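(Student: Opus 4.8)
The plan is to combine the frame-construction machinery (Lemma~\ref{lem:key}) with explicit computer-verified constructions for the remaining small cases, exactly as was done for lengths $12$, $16$, $20$ and $28$. First I would recall that $L_{32,82}$ has minimum norm $4$ (it is an extremal odd unimodular lattice in dimension $32$), so Lemma~\ref{lem:frame} forces any $k$ for which $L_{32,82}$ contains a $k$-frame to satisfy $k\ge\min(L_{32,82})=4$; this gives the ``only if'' direction. For the ``if'' direction, I would note that $L_{32,82}\cong A_4(C_{32,4}(D_{16}))$, so by Lemma~\ref{lem:key} (case~(e) of Lemma~\ref{lem:prime}) the lattice $L_{32,82}$ already contains a $k$-frame for every $k\ge 4$ with $k\ne 2^{m_1}3^{m_2}$. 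Since $k\ge 4$, the only exponents $(m_1,m_2)$ left to rule out are those giving $k\in\{4,6,8,9,\dots\}$; but for $k$ divisible by $4$ we are done by Lemma~\ref{lem:frame} applied to the $4$-frame, and likewise $k=2^{m_1}3^{m_2}$ with $m_1\ge 2$ is handled by the $4$-frame. So the genuinely new cases are precisely $k=6$ and $k=9$ (the values $k=2,3$ are excluded since $k\ge 4$, and $k=2^{m_1}3^{m_2}$ with $4\mid k$ reduces to $k=4$).

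Next I would exhibit, for each of $k=6$ and $k=9$, a self-dual $\ZZ_k$-code $C_{k,32}$ of length $32$ with $A_k(C_{k,32})\cong L_{32,82}$, which by the criterion ``$L$ contains a $k$-frame iff $L\cong A_k(C)$ for some self-dual $\ZZ_k$-code $C$'' (cited before Lemma~\ref{lem:LtoC}) yields a $6$-frame and a $9$-frame in $L_{32,82}$. Concretely I take $C_{k,32}$ to have a generator matrix of the bordered double-negacirculant form~(\ref{eq:GM}), with $A,B$ the $8\times 8$ negacirculant matrices whose first rows $r_A,r_B$ are listed in Table~\ref{Tab:32}. Self-duality follows from the identity $AA^T+BB^T=-I_8$ (checked directly over $\ZZ_k$), together with the fact that the form~(\ref{eq:GM}) always produces a code of size $k^{16}$. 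For $k=6$ one must additionally confirm the code is Type~I rather than Type~II: this is immediate because the Euclidean weight of the first generator row is $\wt_E(r_A)+\wt_E(r_B)=41\not\equiv 0\pmod{4k}$, so not all Euclidean weights lie in $4k\ZZ$. Finally, a {\sc Magma} computation verifies $A_6(C_{6,32})\cong L_{32,82}$ and $A_9(C_{9,32})\cong L_{32,82}$ (e.g.\ by matching invariants such as the theta series, kissing number and, if needed, the full automorphism group, or by exhibiting an explicit isometry).

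The main obstacle is locating the two codes $C_{6,32}$ and $C_{9,32}$: one must search the space of double-negacirculant self-dual $\ZZ_k$-codes of length $32$ for a member whose Construction~A lattice is isomorphic to the specific extremal lattice $L_{32,82}$ among the five extremal odd unimodular lattices in dimension $32$. This is a finite but potentially large search, carried out by {\sc Magma}, and its success is what makes the theorem go through; once the rows in Table~\ref{Tab:32} are in hand, the remaining verifications ($AA^T+BB^T=-I_8$, the Type~I condition for $k=6$, and the lattice isomorphism) are routine. Assembling the pieces: for $k\ge 4$, either $k=6$, or $k=9$, or $L_{32,82}$ contains a $k$-frame by Lemma~\ref{lem:key} and Lemma~\ref{lem:frame}; in all cases $L_{32,82}$ contains a $k$-frame, and conversely no $k$-frame exists for $k<4$, which proves the theorem.
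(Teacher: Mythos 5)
Your proposal is correct and follows essentially the same route as the paper: reduce via $L_{32,82}\cong A_4(C_{32,4}(D_{16}))$, Lemma~\ref{lem:key} and the $4$-frame to the two cases $k=6,9$, and settle those with the double-negacirculant codes of Table~\ref{Tab:32} verified by {\sc Magma}. One small slip: the ``only if'' direction does not come from Lemma~\ref{lem:frame} (which produces $km$-frames from $k$-frames) but simply from the fact that a $k$-frame consists of vectors of norm $k$, so $k\ge\min(L_{32,82})=4$; your argument is otherwise exactly the paper's.
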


There are three inequivalent
binary extremal Type~I codes of length $32$~\cite{C-S}.
Any ternary self-dual code of length $32$ has minimum weight 
at most $9$~\cite{MS73}.
% There is an extremal Type~I $\ZZ_4$-code of 
% length $32$ \cite{H-FFA12}.
Hence, by Lemma~\ref{lem:LtoC}, we have the following:

\begin{cor}\label{cor:32}
There is an extremal Type~I $\ZZ_{k}$-code of
length $32$ 
if and only if $k$ is a positive integer with $k \ne 1,3$.
% for every positive integer $k$ with $k \ne 1,3$.
\end{cor}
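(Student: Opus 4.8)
The plan is to establish Corollary~\ref{cor:32} by combining the frame existence result in Theorem~\ref{thm:32} with Lemma~\ref{lem:LtoC} and with the known nonexistence results for small $k$. First I would record what the bound~\eqref{eq:nbound} gives in length $n=32$: since $32\equiv 8\pmod{24}$ and $\lfloor 32/24\rfloor=1$, an extremal self-dual $\ZZ_k$-code of length $32$ is one with $d_E(C)=2k\lfloor 32/24\rfloor+2k=4k$. So the task is to decide, for each positive integer $k$, whether a Type~I $\ZZ_k$-code of length $32$ with $d_E(C)=4k$ exists. (Recall that for $k$ even an extremal self-dual code of this length with $d_E=4k$ could a priori be Type~II, but the extremal odd unimodular lattices $L_{32,i}$ are odd, so Construction~A on them produces Type~I codes; this is exactly the point of using $L_{32,82}$.)

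The positive direction proceeds as follows. By Theorem~\ref{thm:32}, $L_{32,82}$ contains a $k$-frame for every integer $k\ge 4$, and $L_{32,82}$ is an extremal odd unimodular lattice, so $\min(L_{32,82})=4$. Applying Lemma~\ref{lem:LtoC} with $L=L_{32,82}$, for every $k\ge 4$ there is a self-dual $\ZZ_k$-code $C$ with $d_E(C)\ge k\min(L_{32,82})=4k$; by the bound~\eqref{eq:nbound} we in fact have $d_E(C)=4k$, so $C$ is extremal. Moreover $C$ is Type~I: $A_k(C)\cong L_{32,82}$ is odd, and $A_{2k}$ of a Type~II code is even (as recalled after Construction~A), so $C$ cannot be Type~II. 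For $k=2$, the three inequivalent binary extremal Type~I codes of length $32$~\cite{C-S} (which are Type~I with $d=8=4k$) handle that case directly. This covers all $k$ with $k=2$ or $k\ge 4$.

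The negative direction disposes of $k=1$ and $k=3$. The case $k=1$ is vacuous by the standing convention $k\ge 2$, but since the corollary is phrased for positive integers it is worth stating that $\ZZ_1$ is trivial. For $k=3$: an extremal ternary self-dual code of length $32$ would need $d_E(C)=4k=12$, hence minimum Hamming weight at least $\lceil 12/4\rceil = 3$... more precisely, since codewords of a ternary code have all nonzero coordinates equal to $\pm1$, the Euclidean weight equals the Hamming weight, so $d_E(C)=12$ forces minimum weight $12$ — but this already exceeds the bound $3\lfloor 32/12\rfloor+3=9$ from~\eqref{eq:kbound}, and indeed Mallows--Sloane~\cite{MS73} show any ternary self-dual code of length $32$ has minimum weight at most $9 < 12$. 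Hence no extremal Type~I $\ZZ_3$-code of length $32$ exists.

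The main obstacle is not any single step but ensuring the three ingredients are correctly aligned: (i) confirming that the bound~\eqref{eq:nbound} for $n=32$ indeed reads $d_E\le 4k$ and that Construction~A from an extremal odd unimodular lattice of norm $4$ meets it; (ii) the Type~I-versus-Type~II bookkeeping for even $k$, which is where one must invoke oddness of $L_{32,82}$ rather than just its minimum norm; and (iii) the $k=3$ exclusion, where one must translate "extremal" (meaning $d_E=4k=12$) into the Hamming-weight statement that~\cite{MS73} contradicts. None of these is computationally hard, but the statement "$k\ne 1,3$" has a genuinely different reason at $k=3$ (a ternary obstruction) than at $k=1$ (triviality), so the proof must address them separately rather than as a single uniform argument.
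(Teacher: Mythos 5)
Your proposal is correct and follows essentially the paper's own route: Theorem~\ref{thm:32} plus Lemma~\ref{lem:LtoC} (with $\min(L_{32,82})=4$ and the bound~(\ref{eq:nbound})) for $k\ge 4$, the three binary extremal Type~I codes of~\cite{C-S} for $k=2$, and the Mallows--Sloane bound~\cite{MS73} (minimum weight at most $9<12$) to exclude $k=3$. The only difference is that you spell out the Type~I-versus-Type~II point via the oddness of $L_{32,82}$, which the paper leaves implicit.
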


% Since the $5$ non-isomorphic extremal odd unimodular lattices 
% have the identical theta series  \cite{CS-odd}, 
% by Lemma \ref{lem:T}, we have the following:
% 
% \begin{cor}
% Let $\sum_{m=0}^{\infty}A_{m}q^m$ denote the theta series
% of an extremal odd unimodular lattice in dimension $32$.
% Then $A_{k} \ge 64$ for every positive integer 
% $k$ with $k \ge 4$.
% \end{cor}

%%%%%%%%%%%%%%%%%
For each extremal odd unimodular lattice in dimension $32$,
one of the even unimodular neighbors is 
extremal~\cite{CS-odd}.
Moreover, it follows from the construction in~\cite{CS-odd}
that the extremal even unimodular neighbor of $L_{32,82}$ 
is the $32$-dimensional Barnes--Wall lattice $BW_{32}$
(see e.g.~\cite[Chapter~8, Section~8]{SPLAG} for $BW_{32}$).
Since the even sublattice of $L_{32,82}$ contains
a $2k$-frame for every integer $k$ with $k \ge 2$
by Theorem~\ref{thm:32},
we have the following:

\begin{prop}
$BW_{32}$ contains a $2k$-frame
if and only if $k$ is an integer with $k \ge 2$.
\end{prop}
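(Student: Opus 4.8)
The plan is to reduce the statement for $BW_{32}$ to the already-established frame theorem for $L_{32,82}$ by exploiting the neighbor relationship. First I recall the setup from Section~\ref{sec:2U}: since $L_{32,82}$ is an odd unimodular lattice in dimension $32$ (divisible by $8$), its even sublattice $L_0 = \{x \in L_{32,82} \mid (x,x) \equiv 0 \pmod 2\}$ has index $2$, and $L_{32,82}$ has exactly two even unimodular neighbors, namely $L_0 \cup L_1$ and $L_0 \cup L_3$, in the notation $L_0^* = L_0 \cup L_1 \cup L_2 \cup L_3$ with $L_{32,82} = L_0 \cup L_2$. The key external input, cited from~\cite{CS-odd}, is that one of these two even neighbors of $L_{32,82}$ is extremal, and that the construction there identifies this extremal even neighbor as the Barnes--Wall lattice $BW_{32}$.

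The heart of the argument is the observation that a $2k$-frame lying in $L_0$ is automatically a $2k$-frame of \emph{every} lattice containing $L_0$, in particular of $BW_{32}$. Concretely, if $\{f_1,\ldots,f_{32}\}$ is a $2k$-frame of $L_{32,82}$, then each $f_i$ has norm $2k$, which is even, so $f_i \in L_0$; hence the same set of vectors sits inside $L_0 \subset BW_{32}$ and still satisfies $(f_i,f_j) = 2k\,\delta_{i,j}$, i.e.\ it is a $2k$-frame of $BW_{32}$. Thus it suffices to produce a $2k$-frame in $L_{32,82}$ for every integer $k \ge 2$; but Theorem~\ref{thm:32} says $L_{32,82}$ contains a $t$-frame for every integer $t \ge 4$, and in particular for $t = 2k$ with $k \ge 2$. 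This gives the "if" direction.

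For the "only if" direction, suppose $BW_{32}$ contains a $2k$-frame for some positive integer $k$. Scaling a frame by a positive integer shows (as in Lemma~\ref{lem:frame}) that $k \ge 1$ is the only constraint one gets for free, so I must rule out $k = 1$, i.e.\ a $2$-frame. A $2$-frame of $BW_{32}$ would consist of $64$ vectors of norm $2$, hence $BW_{32}$ would contain vectors of norm $2$; but $BW_{32}$ is an extremal even unimodular lattice in dimension $32$, so $\min(BW_{32}) = 2\lfloor 32/24\rfloor + 2 = 4 > 2$, a contradiction. (Equivalently: a $2$-frame gives a binary self-dual code $C$ of length $32$ with $A_2(C) \cong BW_{32}$, forcing $d_E(C) = 2\min(BW_{32}) = 8$; but a binary self-dual — necessarily doubly-even, Type~II — code of length $32$ has minimum weight at most $8$ with equality only for the five extremal codes, and Construction~A applied to a Type~II code yields an \emph{even} lattice, which is consistent, yet such a $2$-frame would make $BW_{32}$ have minimum norm $2$ — impossible.) So $k \ne 1$, and combining both directions proves that $BW_{32}$ contains a $2k$-frame if and only if $k$ is an integer with $k \ge 2$.

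The only genuinely nontrivial point — and the one I would flag as the main obstacle — is the identification, via~\cite{CS-odd}, of the \emph{extremal} even neighbor of $L_{32,82}$ with $BW_{32}$ rather than with an extremal even unimodular lattice not isometric to $BW_{32}$; once that is granted, the frame transfer is immediate because every frame vector has even norm and therefore lies in the common index-$2$ sublattice $L_0$. No computation is needed beyond invoking Theorem~\ref{thm:32}.
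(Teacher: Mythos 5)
Your proposal is correct and follows essentially the same route as the paper: identify the extremal even neighbor of $L_{32,82}$ with $BW_{32}$ via~\cite{CS-odd}, note that every $2k$-frame of $L_{32,82}$ has even-norm vectors and hence lies in the even sublattice $L_0\subset BW_{32}$, invoke Theorem~\ref{thm:32} for $k\ge 2$, and rule out $k=1$ since $\min(BW_{32})=4$. The paper leaves the last two observations implicit, but your argument is the same in substance.
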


% Hence, the extremal even neighbor of $L_{32,82}$ 
% contains a $2k$-frame
% for a positive integer $k$.
% The existence of an extremal Type~II $\ZZ_{2k}$-code of 
% length $32$ is known for $k=1,2,\ldots,6$ (see \cite[Table 1]{HM}).
% The existence of an extremal Type~II $\ZZ_{6}$-code of 
% length $32$ is known (see \cite{HKO}).
Since there are $5$ inequivalent binary extremal Type~II codes of 
length $32$~\cite{CPS},
by Lemma~\ref{lem:LtoC}, we have an alternative proof of the following:

\begin{cor}[Harada and Miezaki~\cite{HM12}]
There is an extremal Type~II $\ZZ_{2k}$-code of
length $32$ 
if and only if $k$ is a positive integer.
% for every positive integer $k$.
\end{cor}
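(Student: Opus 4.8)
The plan is to deduce existence for every positive integer $k$ directly from the $2k$-frame structure of $BW_{32}$ together with Lemma~\ref{lem:LtoC}; this is the alternative proof indicated just before the statement. Since the preceding proposition yields a $2k$-frame in $BW_{32}$ only when $k\ge 2$, I will dispose of the case $k=1$ separately. The backward implication is vacuous, as the assertion concerns positive integers $k$ only.

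For $k\ge 2$ I would argue as follows. The preceding proposition gives a $2k$-frame in $BW_{32}$, so by the frame-to-code correspondence of~\cite{HMV} there is a self-dual $\ZZ_{2k}$-code $C$ of length $32$ with $A_{2k}(C)\cong BW_{32}$. Because $BW_{32}$ is even unimodular, $C$ is Type~II by the characterization of~\cite{BDHO}. Moreover $BW_{32}$ is extremal even unimodular in dimension $32$, so $\min(BW_{32})=2\lfloor 32/24\rfloor+2=4$; inserting this into $\min(A_{2k}(C))=\min\{2k,\,d_E(C)/(2k)\}$ and using $2k\ge 4$ forces $d_E(C)\ge 8k$. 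The extremal bound for a Type~II $\ZZ_{2k}$-code of length $32$ is $4k\lfloor 32/24\rfloor+4k=8k$, so in fact $d_E(C)=8k$ and $C$ is extremal. One could equally cite Lemma~\ref{lem:LtoC} to obtain $d_E(C)\ge 2k\min(BW_{32})=8k$ in a single step.

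It remains to handle $k=1$, the only step requiring input outside the frame machinery: an extremal Type~II $\ZZ_2$-code of length $32$ is a binary doubly-even self-dual code of minimum weight $8k=8$ (over $\ZZ_2$ the Hamming and Euclidean weights coincide), and $5$ inequivalent such codes are known~\cite{CPS}. I anticipate no genuine obstacle here; the sole point meriting care is why $k=1$ must be treated by hand---a $2$-frame, which is the $k=1$ instance, cannot exist in $BW_{32}$ since its minimum norm is $4$, and this is precisely why the $2k$-frame proposition is confined to $k\ge 2$.
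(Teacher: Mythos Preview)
Your proposal is correct and follows essentially the same route as the paper: use the preceding proposition on $2k$-frames in $BW_{32}$ together with Lemma~\ref{lem:LtoC} (and the Type~II characterization via Construction~A) for $k\ge 2$, and invoke the known binary extremal Type~II codes of length $32$ from~\cite{CPS} for $k=1$. Your write-up merely spells out explicitly the steps the paper compresses into the single phrase ``by Lemma~\ref{lem:LtoC}''.
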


%%%%%%%%%%%%%%%%%%%%%%%%%%%%%%%%%
\subsection{Type~I $\ZZ_k$-codes of length 36}

Since $A_6(C_{36,6}(D_{18}))$ in Table~\ref{Tab:L} contains a $6$-frame,
it is sufficient to investigate the existence of
a $k$-frame in $A_6(C_{36,6}(D_{18}))$ for $k=4,5,7,9$
by Lemma~\ref{lem:key}.
For $k=5,7,9$, let $C_{k,36}$ be the
$\ZZ_k$-code  with generator matrix of the form (\ref{eq:GM}),
where the first rows $r_A$ and $r_B$ of $A$ and $B$
are listed in Table~\ref{Tab:36}.
Since $AA^T+BB^T=-I_{9}$,
these codes are Type~I\@.
% An extremal Type~I $\ZZ_{4}$-code of length $36$ can be
% found in \cite[Table 1]{H12}.
Let $C_{4,36}$ be the $\ZZ_4$-code with generator matrix of
the following form:
\[
\left(\begin{array}{ccc}
I_{16} & A & B_1+2B_2 \\
O    &2I_{4} & 2D \\
\end{array}\right),
\]
where we list in Figure~\ref{Fig:36}
the matrices
$\left(\begin{array}{cc}
A & B_1+2B_2 \\
\end{array}\right)$
and $2D$.
It follows that $C_{4,36}$ is self-dual.
For $k=4,5,7,9$, 
we have verified by {\sc Magma} that
$A_k(C_{k,36}) \cong A_6(C_{36,6}(D_{18}))$.
Hence, combining with Lemma~\ref{lem:key}, we have the following:

\begin{thm}\label{thm:36}
$A_6(C_{36,6}(D_{18}))$ contains a $k$-frame 
if and only if $k$ is an integer with $k \ge 4$.
% $A_6(C_{36,6}(D_{18}))$ contains a $k$-frame 
% for every positive integer $k$ with $k \ge 4$.
\end{thm}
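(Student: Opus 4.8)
The plan is to combine the "only if" bound coming from the lattice structure with an explicit "if" construction supplied by Proposition~\ref{prop:const} together with the codes tabulated above. For the "only if" direction: since $A_6(C_{36,6}(D_{18}))$ is an extremal odd unimodular lattice in dimension $36$, its minimum norm is $2\lfloor 36/24\rfloor+2=4$. If it contained a $k$-frame with $k\le 3$, then the $k$ mutually orthogonal vectors $f_1,\dots,f_{36}$ would each have norm $k\le 3<4$, contradicting $\min\bigl(A_6(C_{36,6}(D_{18}))\bigr)=4$. (For $k=2,3$ one can alternatively invoke the nonexistence of a binary Type~I code of length $36$ and minimum weight $8$, respectively a ternary self-dual code of length $36$ and minimum weight $12$, via Lemma~\ref{lem:LtoC}; but the norm argument is cleaner.) Hence any $k$-frame forces $k\ge 4$.

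For the "if" direction, I would split according to the residue class of $k$ handled by Lemma~\ref{lem:key}. By that lemma, $A_6(C_{36,6}(D_{18}))$ already contains a $k$-frame for every integer $k\ge 4$ with $k\ne 2^{m_1}3^{m_2}5^{m_3}7^{m_4}$; in particular it contains a $6$-frame. So the remaining cases are $k=4,5,7,9$ and all integers of the shape $2^{m_1}3^{m_2}5^{m_3}7^{m_4}$. Here is where Lemma~\ref{lem:frame} does the heavy lifting: since $36$ is divisible by $4$, once the lattice contains a $k_0$-frame it contains a $k_0m$-frame for every positive integer $m$. Because the lattice contains a $6$-frame (hence a $6m$-frame for all $m$), a $4$-frame (hence a $4m$-frame), a $5$-frame (hence a $5m$-frame), a $7$-frame (hence a $7m$-frame), and a $9$-frame (hence a $9m$-frame), every integer $k\ge 4$ is covered: any such $k$ is a multiple of one of $4,5,6,7,9$ unless $k$ itself is one of the small exceptional values, and those five base cases are exactly the ones established directly. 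Thus it suffices to exhibit $k$-frames for $k=4,5,7,9$.

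The four base cases $k=5,7,9$ are handled by the explicit Type~I codes $C_{k,36}$ with generator matrix of the form (\ref{eq:GM}), with first rows $r_A,r_B$ read off from Table~\ref{Tab:36}; one checks $AA^T+BB^T=-I_9$ (so the code is self-dual and, by inspecting a codeword's Euclidean weight, Type~I), and then verifies by {\sc Magma} that $A_k(C_{k,36})\cong A_6(C_{36,6}(D_{18}))$, which is extremal. The case $k=4$ is handled by the code $C_{4,36}$ whose generator matrix, displayed block-wise with the $I_{16}$, $2I_4$, and the $(1,0)$-matrices $A,B_1+2B_2,D$ of Figure~\ref{Fig:36}, is self-dual (the Gram-type identity and the count $\#C_{4,36}=4^{18}$ give this), and $A_4(C_{4,36})\cong A_6(C_{36,6}(D_{18}))$, again checked by {\sc Magma}. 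Extremality of the ambient lattice then upgrades each code to an extremal, or at least an optimal-minimum-norm, Type~I $\ZZ_k$-code as needed downstream.

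The main obstacle is entirely computational and book-keeping rather than conceptual: producing, for $k=4,5,7,9$, negacirculant (or near-negacirculant, for $k=4$) data whose Construction~A lattice is \emph{isomorphic} to the specific lattice $A_6(C_{36,6}(D_{18}))$ — not merely some extremal odd unimodular lattice in dimension $36$, of which there are many. This requires a search over first-row vectors plus a {\sc Magma} lattice-isomorphism test, and the $k=4$ case is the most delicate because the generator matrix is not purely negacirculant and one must simultaneously arrange self-duality ($\#C=4^{18}$ and the inner-product identity) and the correct isomorphism type. Everything else (the norm bound, the reduction via Lemmas~\ref{lem:frame} and~\ref{lem:key}, and the final assembly) is routine.
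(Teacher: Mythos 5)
Your proposal follows essentially the same route as the paper: the minimum norm $4$ of the lattice gives the ``only if'' direction, Lemma~\ref{lem:key} covers every $k\ge 4$ outside the exceptional set $2^{m_1}3^{m_2}5^{m_3}7^{m_4}$, the exceptional values are reached from base frames for $k=4,5,6,7,9$ via Lemma~\ref{lem:frame}, and the frames for $k=4,5,7,9$ come from the codes $C_{4,36}$ and $C_{k,36}$ ($k=5,7,9$) together with the {\sc Magma} check that $A_k(C_{k,36})\cong A_6(C_{36,6}(D_{18}))$. One slip to correct: the $6$-frame does \emph{not} follow from Lemma~\ref{lem:key}, because $6=2\cdot 3$ is precisely one of the values excluded by condition ($\star$) in case (f); it is instead immediate from the definition of the lattice as $A_6$ of a self-dual $\ZZ_6$-code (by the frame/Construction~A correspondence, $A_6(C)$ always contains a $6$-frame), which is exactly how the paper justifies reducing to $k=4,5,7,9$. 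Relatedly, your covering sentence ``any such $k$ is a multiple of one of $4,5,6,7,9$'' should be asserted only for the exceptional $k\ge 4$ of the form $2^{m_1}3^{m_2}5^{m_3}7^{m_4}$ (the non-exceptional ones being handled by Lemma~\ref{lem:key}); with these two adjustments your argument coincides with the paper's proof.
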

\begin{rem}
We have verified by {\sc Magma} that $A_6(C_{36,6}(D_{18}))$
has theta series $1 + 42840q^4 + 1916928q^5 +
\cdots$ and automorphism group of order $288$
% See \cite{H36} for distinguishing $A_6(C_{36,6}(D_{18}))$
% from the known lattices.
(see~\cite{H36} for details to distinguish $A_6(C_{36,6}(D_{18}))$
from the known lattices, and construction
of more extremal odd unimodular lattices).
\end{rem}

%%%%%%%%%%%%%%%%%%%%%%%%%%%%%%%%%%%%%%%%%%%%%%%%
\begin{table}[thb]
\caption{Extremal Type~I $\ZZ_{k}$-codes of length $36$}
\label{Tab:36}
\begin{center}
%{\small
{\footnotesize
%{\scriptsize
\begin{tabular}{c|l|l}
\noalign{\hrule height0.8pt}
Code & \multicolumn{1}{c|}{$r_A$} & \multicolumn{1}{c}{$r_B$} \\
\hline
$C_{5,36}$ & $(0,1,1,2,3,2,0,2,3)$ & $(1,1,0,2,0,3,4,0,4)$ \\
$C_{7,36}$ & $(0,1,6,2,3,3,6,4,5)$ & $(4,3,3,6,2,4,3,0,3)$ \\
$C_{9,36}$ & $(0,1,0,5,5,0,0,0,3)$ & $(0,2,3,3,4,5,5,7,3)$ \\
% 5&$(0,0,0,1,2,2,0,4,2)$&$(0,0,0,0,4,3,3,0,1)$\\
% 7&$(0,0,0,1,5,5,5,1,1)$&$(0,5,4,2,5,1,1,3,6)$\\
% 9&$(0,0,0,1,0,4,3,0,0)$&$(0,4,1,5,3,5,1,7,0)$\\
%11&$( 0, 0, 0, 1, 6, 0, 3, 0, 6)$&$( 5,10, 5, 0, 1, 9, 5, 6, 8)$\\
%19&$( 0, 0, 0, 1,15,15, 9, 6, 5)$&$(14,16, 0,14,15, 8, 8, 3,12)$\\
%29&$( 0, 0, 0, 1,25, 5,17, 9,10)$&$(14, 8, 8, 3,19,27, 6, 4,10)$\\
\noalign{\hrule height0.8pt}
\end{tabular}
}
\end{center}
\end{table}
%%%%%%%%%%%%%%%%%%%%%%%%%%%%%%%%%%%%%%%%%%%%%%%%

%%%%%%%%%%%%%%%%%  Fig  %%%%%%%%%%%%%%%%%
\begin{figure}[thb]
\centering
%{\small
{\footnotesize
\begin{tabular}{ll}
$
\left(\begin{array}{cc}
A & B_1+2B_2 \\
\end{array}\right)
=
\left(\begin{array}{cc}
0100 & 1203131221301121 \\
1011 & 1011202100200000 \\
1010 & 2020221222311322 \\
0101 & 1311223022101123 \\
1110 & 0022223222133220 \\
0110 & 0102101300313130 \\
0100 & 1003131232103103 \\
0001 & 0212210231101002 \\
1101 & 3311103322131110 \\
0101 & 3033123233020103 \\
0101 & 1320133200323130 \\
0100 & 2002221022321133 \\
0101 & 3211333002312322 \\
0101 & 1031113220233320 \\
0101 & 0103111200301112 \\
1110 & 2222020200331300 
\end{array}\right),
$
&
$
2D=
\left(\begin{array}{c}
0020020000222022\\
0200202000000220\\
2222220000020000\\
2022000000000000
\end{array}\right)
$
\end{tabular}
\caption{A generator matrix of $C_{4,36}$}
\label{Fig:36}
}
\end{figure}
%%%%%%%%%%%%%%%%%  Fig  %%%%%%%%%%%%%%%%%

There are $41$ inequivalent binary extremal Type~I codes of 
length $36$~\cite{MG08}.
There is a ternary extremal Type~I code of length $36$~\cite{Pless72}.
Hence, by Lemma~\ref{lem:LtoC}, we have the following:

\begin{cor}\label{cor:36}
There is an extremal Type~I $\ZZ_{k}$-code of
length $36$ 
if and only if $k$ is an integer with $k \ge 2$.
% for every positive integer $k$ with $k \ge 2$.
\end{cor}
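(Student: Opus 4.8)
The plan is to derive all $k \ge 4$ uniformly from Theorem~\ref{thm:36} together with Lemma~\ref{lem:LtoC}, and to settle the two remaining cases $k = 2, 3$ by quoting known classifications. Since $\ZZ_k$-codes are considered here only for $k \ge 2$, the ``only if'' direction is vacuous, so it suffices to exhibit an extremal Type~I $\ZZ_k$-code of length $36$ for every integer $k \ge 2$.

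For $k \ge 4$, Theorem~\ref{thm:36} asserts that $L := A_6(C_{36,6}(D_{18}))$ contains a $k$-frame, and $\min(L) = 4$ by Table~\ref{Tab:L}. Hence Lemma~\ref{lem:LtoC} produces a self-dual $\ZZ_k$-code $C$ of length $36$ with $d_E(C) \ge 4k$. On the other hand $n = 36$ has $\lfloor n/24 \rfloor = 1$ and lies in the ``otherwise'' case of~(\ref{eq:nbound}), which there reads $d_E(C) \le 2k\lfloor 36/24\rfloor + 2k = 4k$. Therefore $d_E(C) = 4k$, so $C$ is extremal. Moreover $C$ is Type~I: for even $k$ no Type~II $\ZZ_{2k}$-code of length $36$ exists since $36$ is not divisible by $8$, and for odd $k$ the notion of Type~II does not apply.

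For $k = 2$, I would invoke the $41$ inequivalent binary extremal Type~I codes of length $36$ classified in~\cite{MG08}: each has minimum weight $8 = 4\lfloor 36/24\rfloor + 4$, and for binary codes the Euclidean weight coincides with the Hamming weight, so any such code is an extremal Type~I $\ZZ_2$-code. For $k = 3$, I would invoke the ternary extremal Type~I code of length $36$ from~\cite{Pless72}, whose minimum weight $12$ equals the value $2\cdot 3\cdot\lfloor 36/24\rfloor + 2\cdot 3$ of the bound~(\ref{eq:nbound}) at $(n,k) = (36,3)$.

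Almost no obstacle remains once Theorem~\ref{thm:36} is in hand; the only thing to be careful about is that at length $36$ the bound~(\ref{eq:nbound}) specializes to exactly $4k$ for all $k$ and that $\min(L) = 4$, so that a $k$-frame in $L$ yields a genuinely extremal---not merely near-extremal---code. The substantive work, namely constructing $k$-frames in $L$ for all $k \ge 4$ from the explicit codes $C_{4,36},\ldots,C_{9,36}$ and from Proposition~\ref{prop:const} with Lemma~\ref{lem:prime}(f), has already been carried out in proving Theorem~\ref{thm:36}.
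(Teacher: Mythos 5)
Your proposal is correct and follows essentially the same route as the paper: the cases $k\ge 4$ come from Theorem~\ref{thm:36} combined with Lemma~\ref{lem:LtoC} (with $\min(L)=4$ matching the bound $4k$ of (\ref{eq:nbound}) at $n=36$), and $k=2,3$ are settled by citing the binary classification of~\cite{MG08} and the ternary code of~\cite{Pless72}. Your extra remark that the resulting codes are automatically Type~I (since $36$ is not divisible by $8$) is a detail the paper leaves implicit, but it does not change the argument.
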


% By Lemma \ref{lem:T}, we have the following:
% 
% \begin{cor}
% Let $\sum_{m=0}^{\infty}A_{m}q^m$ denote the theta series
% of $A_6(C_{36,6}(D_{18}))$.
% Then $A_{k} \ge 72$ for every positive integer $k$ with $k \ge 4$.
% \end{cor}

%%%%%%%%%%%%%%%%%%%%%%%%%%%%%%%%%%%%%%%%%%%%%%%%%
\subsection{Type~I $\ZZ_k$-codes of length 40}
\label{subsec:40}

Since $A_4(C_{40,4}(P_{20}))$ in Table~\ref{Tab:L} contains a $4$-frame,
it is sufficient to  investigate the existence of
a $k$-frame in extremal odd unimodular lattices in 
dimension $40$ for $k=6,9,13,19$ by Lemma~\ref{lem:key}.
For $k=9,13,19$,
let $C_{k,40}$ be the $\ZZ_k$-code with generator 
matrix of the form (\ref{eq:GM}),
where the first rows $r_A$ and $r_B$ of $A$ and $B$
are listed in Table~\ref{Tab:40}.
Since $AA^T+BB^T=-I_{10}$, these codes are Type~I\@.
Moreover, we have verified 
by {\sc Magma} that $A_k(C_{k,40})$ is extremal ($k=9,13,19$).
An extremal Type~I $\ZZ_{6}$-code of length $40$ can be
found in~\cite{GH05}.
Hence, combining with Lemma~\ref{lem:key}, we have the following:

\begin{prop}
There is an extremal odd unimodular lattice in dimension $40$
containing a $k$-frame if and only if
$k$ is an integer with $k \ge 4$.
\end{prop}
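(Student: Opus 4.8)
The plan is to prove the final Proposition by combining the $k$-frame existence results with Lemma~\ref{lem:frame} (which reduces everything to prime $k$) and Lemma~\ref{lem:key}, exactly in the style used for Theorems~\ref{thm:32} and~\ref{thm:36}. First I would establish the easy direction: any extremal odd unimodular lattice $L$ in dimension $40$ has $\min(L) = 2\lfloor 40/24\rfloor + 2 = 4$, and by definition a $k$-frame consists of $40$ pairwise orthogonal vectors of norm $k$; since the minimum norm is $4$, a $k$-frame can only exist for $k \ge 4$. (One should also remark that the degenerate $k$-frames with $k<4$ cannot occur because all nonzero norms are at least $4$, so no vector has norm $1,2,3$.) This gives the ``only if'' direction with no computation.

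For the ``if'' direction, the strategy is to exhibit, in some fixed extremal odd unimodular lattice in dimension $40$, a $k$-frame for every $k \ge 4$. By Lemma~\ref{lem:frame}, since $40$ is divisible by $4$, it suffices to produce a $p$-frame for every prime $p$, together with a $4$-frame (to cover $k$ whose only prime factor is $2$). The lattice $A_4(C_{40,4}(P_{20}))$ from Table~\ref{Tab:L} already contains a $4$-frame by construction, and by Lemma~\ref{lem:key} it contains a $k$-frame for all $k \ge 4$ with $k \ne 2^{m_1}3^{m_2}13^{m_3}19^{m_4}$ (case (g) of Lemma~\ref{lem:prime}). So the remaining primes to handle are $p = 3, 13, 19$, and the remaining prime powers to handle are those of the form $2^{m_1}3^{m_2}13^{m_3}19^{m_4}$ — but by Lemma~\ref{lem:frame} once we have $3$-, $13$-, $19$- and $4$-frames (all in extremal lattices in dimension $40$) we get $k$-frames for all such $k$. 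For $k = 9, 13, 19$ the proposed proof presents explicit self-dual $\ZZ_k$-codes $C_{k,40}$ with generator matrix of the form \eqref{eq:GM} (negacirculant blocks $A, B$ with $AA^T + BB^T = -I_{10}$), verified by {\sc Magma} to have $A_k(C_{k,40})$ extremal; since $A_k(C_{k,40})$ is extremal it has minimum norm $4$ and (being of the form Construction~A from a self-dual $\ZZ_k$-code) it automatically contains a $k$-frame by the correspondence between $k$-frames and Construction~A (see~\cite{HMV}). For $k = 6$ one invokes the extremal Type~I $\ZZ_6$-code of length $40$ from~\cite{GH05}, whose Construction~A lattice is an extremal odd unimodular lattice in dimension $40$ containing a $6$-frame; note $6$ is needed because $9 = 3^2$ alone does not produce the $3$-part together with the $2$-part via Lemma~\ref{lem:frame} applied to a $9$-frame, but a $6$-frame does — actually the cleanest bookkeeping is: $4$-frame handles $2^{m_1}$; combined with the $9$- and $6$-frames we get all $2^{m_1}3^{m_2}$; the $13$-frame handles $13^{m_3}$ and mixed products $2^{m_1}3^{m_2}13^{m_3}$; the $19$-frame similarly handles the $19$ part; all other $k \ge 4$ are covered directly by Lemma~\ref{lem:key}.

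Assembling: given any integer $k \ge 4$, either $k$ is not of the form $2^{m_1}3^{m_2}13^{m_3}19^{m_4}$, in which case $A_4(C_{40,4}(P_{20}))$ contains a $k$-frame by Lemma~\ref{lem:key}; or $k = 2^{m_1}3^{m_2}13^{m_3}19^{m_4}$, in which case I would write $k$ as a product of factors each of which is $1$, $4$, $6$, $9$, $13$, or $19$ (possible since any power of $2$ is a product of $4$'s and at most one $2$ — here one must be slightly careful: $k=2$ is excluded since $k \ge 4$, and $2^{m_1}$ for $m_1 \ge 2$ is a product of $4$'s, possibly times $2$; but $2$ times a $4$-frame... hmm, $8 = 4 \cdot 2$ needs a $2$-frame which does not exist). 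The correct resolution, and the step I expect to be the main obstacle in getting the bookkeeping exactly right, is to note that Lemma~\ref{lem:frame} gives a $km$-frame from a $k$-frame for \emph{every} positive integer $m$; so a single $4$-frame yields $4m$-frames for all $m$, hence all $k \ge 4$ with $4 \mid k$; a $6$-frame yields all $k$ with $6 \mid k$; a $9$-frame yields all $k$ with $9 \mid k$; and among $k = 2^{m_1}3^{m_2}13^{m_3}19^{m_4} \ge 4$ \emph{not} divisible by $4$, $6$, or $9$, the survivors are exactly $k \in \{8? \text{ no}, \ldots\}$ — concretely $k$ with $m_1 \le 1$, $m_2 \le 1$, not both $m_1 \ge 2$ etc.; these finitely many small cases ($k = 13, 19, 13\cdot 2, 19\cdot 2, 13\cdot 3, \ldots$) are then handled by the $13$- and $19$-frames plus Lemma~\ref{lem:frame} (e.g.\ $13 \cdot 2$: take the $13$-frame giving $26$-frame? no — $13 \cdot 3$ from $13$-frame and $m=3$; $13 \cdot 2$ is still problematic). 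I would therefore present the argument as: it suffices to exhibit $p$-frames for $p = 3, 13, 19$ and to recall the $4$-frame; then for arbitrary $k \ge 4$ factor $k = \prod p_i^{a_i}$, observe every prime power $p^a$ with $p$ odd dividing $k$ is obtained by Lemma~\ref{lem:frame} from the $p$-frame, the power $2^{a}$ (if $a \ge 2$) from the $4$-frame, and if $2 \| k$ then $k/2 \ge 2$ is odd-or-has-$2^{a\ge2}$... — cleanest is simply: any $k \ge 4$ is divisible by $4$, or by an odd prime, or equals a product where Lemma~\ref{lem:frame} applies; I will state it as in Theorem~\ref{thm:32}'s proof, namely for each prime $p$ we have exhibited a $p$-frame in an extremal dimension-$40$ lattice (for $p=3$ via the $6$-code of~\cite{GH05} or the $9$-code combined with Lemma~\ref{lem:frame} reasoning, for $p = 13, 19$ via $C_{13,40}, C_{19,40}$, for $p = 2$ via $A_4(C_{40,4}(P_{20}))$ which contains a $4$-frame hence trivially handles $2$-powers $\ge 4$), and every remaining $k$ is covered by Lemma~\ref{lem:key}; invoking Lemma~\ref{lem:frame} then completes the proof.
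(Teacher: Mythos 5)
Your ingredients are exactly the paper's: the $4$-frame in $A_4(C_{40,4}(P_{20}))$, Lemma~\ref{lem:key} (case (g)) for all $k\ge 4$ not of the form $2^{m_1}3^{m_2}13^{m_3}19^{m_4}$, explicit negacirculant Type~I codes $C_{k,40}$ for $k=9,13,19$ whose Construction~A lattices are verified extremal (and hence carry $k$-frames), the extremal Type~I $\ZZ_6$-code of~\cite{GH05} for $k=6$, and Lemma~\ref{lem:frame}; the ``only if'' direction from $\min(L)=4$ is the same trivial observation the paper leaves implicit. So the route is the paper's route.

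The one place your write-up goes astray is the final bookkeeping, which you yourself flag as the main obstacle and never settle cleanly. Two corrections. First, the worry about $13\cdot 2$ is unfounded: Lemma~\ref{lem:frame} gives a $km$-frame for \emph{every} positive integer $m$, so a $13$-frame yields a $26$-frame directly, and more generally any $k$ divisible by one of $4,6,9,13,19$ is covered by the corresponding frame. Second, your closing formulation ``for each prime $p$ we have exhibited a $p$-frame'' cannot be right: no $2$-frame or $3$-frame exists in an extremal odd unimodular lattice in dimension $40$, since every nonzero vector has norm at least $4$, so the prime-by-prime reduction is not available here. The clean statement, which is what the paper's phrase ``it is sufficient to investigate $k=6,9,13,19$'' encodes, is: every $k\ge 4$ of the exceptional form $2^{m_1}3^{m_2}13^{m_3}19^{m_4}$ is divisible by at least one of $4,6,9,13,19$ (if $m_3\ge 1$ or $m_4\ge 1$ use $13$ or $19$; otherwise $k=2^{m_1}3^{m_2}\ge 4$ forces $4\mid k$, or $9\mid k$, or $k=6$), and each of $4,6,9,13,19$ has been realized as a frame in some extremal odd unimodular lattice in dimension $40$, so Lemma~\ref{lem:frame} finishes the proof. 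With that substitution your argument is complete and coincides with the paper's; note also that, since the frames for different $k$ live in possibly different lattices, the statement being proved is existence of \emph{some} extremal lattice with a $k$-frame, which is all the proposition claims.
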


\begin{rem}
The possible theta series of an extremal odd unimodular
lattice in dimension $40$ is given in~\cite{BBH}:
$\theta_{40,\alpha}(q)=1 + (19120  + 256 \alpha) q^4
+ (1376256 - 4096 \alpha) q^5 + \cdots$,
where $\alpha$ is even with $0 \le \alpha \le 80$.
There are $16470$ non-isomorphic 
extremal odd unimodular lattices in dimension $40$ 
having theta series $\theta_{40,80}(q)$, and these lattices
are related to the $16470$ inequivalent binary extremal
Type~II codes of length $40$~\cite{BBH}.
We have verified by {\sc Magma} that $A_4(C_{40,4}(P_{20}))$ 
has theta series $\theta_{40,80}(q)$
and automorphism group of order $7172259840$, and 
$A_k(C_{k,40})$ ($k=9,13,19$) 
have theta series $\theta_{40,0}(q)$
and automorphism group of order $40$.
% We have verified by {\sc Magma} that $A_4(C_{40,4}(P_{20}))$ 
% has theta series 
% $1 + 39600 q^4 + 1048576 q^5 + 45916160 q^6 + \cdots$
% and automorphism group of order $7172259840$, and 
% $A_k(C_{k,40})$ ($k=9,13,19$) 
% have theta series 
% $1 + 19120 q^4 + 1376256 q^5 + 43950080 q^6 +\cdots$
% and automorphism group of order $40$.
Also, we have verified by {\sc Magma} that
three lattices $A_k(C_{k,40})$ ($k=9,13,19$) 
are non-isomorphic.
\end{rem}

There are $10200655$ inequivalent binary extremal Type~I 
codes of length $40$~\cite{BBH}.
There is a ternary extremal Type~I code of length $40$
(see~\cite[Table~XII]{RS-Handbook}).
Hence, by Lemma~\ref{lem:LtoC}, we have the following:

\begin{cor}
There is an extremal Type~I $\ZZ_{k}$-code of
length $40$ 
if and only if $k$ is an integer with $k \ge 2$.
% for every positive integer $k$ with $k \ge 2$.
\end{cor}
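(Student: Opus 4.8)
The plan is to combine the existence of $k$-frames in extremal odd unimodular lattices of length $40$ (the preceding Proposition) with Lemma~\ref{lem:LtoC}, and to dispose of the remaining small values of $k$ by quoting known classification results. The statement we want is: there is an extremal Type~I $\ZZ_k$-code of length $40$ if and only if $k$ is an integer with $k\ge 2$. Recall that the bound $(\ref{eq:nbound})$ specializes at $n=40$ to $d_E(C)\le 2k\lfloor 40/24\rfloor+2k=4k$, so ``extremal'' here means $d_E(C)=4k$.

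First I would handle the generic case $k\ge 4$. By the Proposition just stated, for every integer $k\ge 4$ there is an extremal odd unimodular lattice $L$ in dimension $40$ containing a $k$-frame; since $L$ is extremal, $\min(L)=2\lfloor 40/24\rfloor+2=4$. Applying Lemma~\ref{lem:LtoC}, there is a self-dual $\ZZ_k$-code $C$ with $d_E(C)\ge k\min(L)=4k$. Combined with the upper bound $d_E(C)\le 4k$ from $(\ref{eq:nbound})$, this gives $d_E(C)=4k$, so $C$ is extremal; and $C$ is necessarily Type~I since $A_k(C)\cong L$ is odd (if $C$ were Type~II $\ZZ_{2k}$, then $A_k(C)$ would be even by the result of \cite{BDHO} quoted after Construction~A). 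This settles all $k\ge 4$ at once.

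Next I would dispose of $k=2$ and $k=3$ separately, since the Proposition only covers $k\ge 4$. For $k=2$: a binary extremal Type~I code of length $40$ has $d_E=4\cdot 1+... $, i.e., minimum weight $8$ (the bound $4\lfloor n/24\rfloor+4=8$), and such codes exist — indeed there are $10200655$ of them by \cite{BBH}, as the paper notes. For $k=3$: a ternary extremal code of length $40$ has minimum weight $3\lfloor 40/12\rfloor+3=12$, i.e., Euclidean weight $12=4\cdot 3$, and one exists by the table in \cite{RS-Handbook}. Thus extremal Type~I $\ZZ_k$-codes exist for $k=2,3$ as well. Finally, for $k=1$ the notion is vacuous/excluded (we assume $k\ge 2$ throughout, and the bound $(\ref{eq:nbound})$ is stated for $k\ge 2$), so ``$k$ an integer with $k\ge 2$'' is exactly the set of $k$ for which an extremal Type~I $\ZZ_k$-code of length $40$ exists.

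I do not expect any real obstacle here: this corollary is a bookkeeping assembly of the Proposition, Lemma~\ref{lem:LtoC}, the weight bound $(\ref{eq:nbound})$, and two literature citations for the $k=2,3$ cases. The only point requiring a word of care is the Type~I/Type~II dichotomy — one must check the produced code is not Type~II — but this is immediate from oddness of the lattice $L$ together with the Construction~A criterion. The same template (generic $k\ge\min(L)$ via the frame Proposition plus Lemma~\ref{lem:LtoC}, small $k$ via known classifications) is exactly what was used for lengths $12,16,20,36$ in the preceding corollaries, so no new ideas are needed.
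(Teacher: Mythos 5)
Your proposal is correct and follows essentially the same route as the paper: the $k\ge 4$ cases come from the preceding Proposition (a $k$-frame in an extremal odd unimodular $40$-dimensional lattice) combined with Lemma~\ref{lem:LtoC} and the bound (\ref{eq:nbound}), while $k=2,3$ are settled by citing the binary extremal Type~I codes of \cite{BBH} and a ternary extremal code from \cite{RS-Handbook}. Your explicit check that the resulting code is Type~I (via oddness of the lattice and the Construction~A criterion) is a point the paper leaves implicit, but it is the same argument in substance.
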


%%%%%%%%%%%%%%%%%%%%%%%%%%%%%%%%%%%%%%%%%%%%%%%%
\begin{table}[thb]
\caption{Extremal Type~I $\ZZ_{k}$-codes of length $40$}
\label{Tab:40}
\begin{center}
%{\small
{\footnotesize
%{\scriptsize
\begin{tabular}{c|l|l}
\noalign{\hrule height0.8pt}
Code & \multicolumn{1}{c|}{$r_A$} & \multicolumn{1}{c}{$r_B$} \\
\hline
$C_{ 9,40}$ &$(0,0,1,0,5,8,3,0,4,4)$ & $(0,5,0,0,5,6,7,2,5,8)$ \\
$C_{13,40}$ &$(0,0,1,4,10, 5, 1,10,11, 4)$ & $(11,4, 4,6, 7,12,11, 7, 2,8)$\\
$C_{19,40}$ &$(0,0,1,2,14,16,17, 1, 0,13)$ & $(10,2,15,2,18,16, 9,15,12,0)$\\
%9 & $(1,8,1,0,0,0,0,0,0,0)$ & $(0,0,0,1,1,0,7,5,1,6)$\\
%13& $( 8, 0, 9, 0, 3,11, 3, 2, 0, 9)$
%                  &$(6, 5, 3, 2, 7, 1, 2,10, 5, 1)$\\ 
%19& $( 9, 6,16, 1,15, 3, 1,10,12,14)$
%                  &$( 1, 2,10,17, 5, 9, 5, 9, 9, 7)$\\
\noalign{\hrule height0.8pt}
\end{tabular}
}
\end{center}
\end{table}
%%%%%%%%%%%%%%%%%%%%%%%%%%%%%%%%%%%%%%%%%%%%%%%%

We have verified by {\sc Magma} that
at least one of the even unimodular neighbors of
$L$ is extremal 
for $L=A_4(C_{40,4}(P_{20}))$, $A_9(C_{9,40})$,
$A_{13}(C_{13,40})$ and $A_{19}(C_{19,40})$.
% The existence of an extremal Type~II $\ZZ_{2k}$-code of 
% length $40$ is known for $k=1,2,\ldots,6$ (see \cite[Table 1]{HM}).
There are $16470$
inequivalent binary extremal Type~II codes of length $40$~\cite{BHM}.
By Lemma~\ref{lem:LtoC}, we have an alternative proof of the following:

\begin{cor}[Harada and Miezaki~\cite{HM12}]
There is an extremal Type~II $\ZZ_{2k}$-code of
length $40$ 
if and only if $k$ is a positive integer.
% for every positive integer $k$.
\end{cor}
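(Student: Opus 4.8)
The plan is to run the constructions of this section in reverse, producing Type~II $\ZZ_{2k}$-codes of length $40$ from $2k$-frames of extremal \emph{even} unimodular lattices in dimension $40$. Since a Type~II $\ZZ_{2k}$-code is defined only for even modulus, the ``only if'' direction is vacuous, so only existence for every positive integer $k$ has to be shown. For $k=1$ this is the classical fact that binary extremal Type~II codes of length $40$ exist (there are $16470$ of them~\cite{BHM}). For $k\ge 2$, suppose one has an extremal even unimodular lattice $L$ in dimension $40$; then $\min(L)=4$, and by the characterization of $k$-frames via self-dual codes (see~\cite{HMV}), if $L$ contains a $2k$-frame there is a self-dual $\ZZ_{2k}$-code $C$ with $A_{2k}(C)\cong L$. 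From $\min\{2k,\,d_E(C)/(2k)\}=4$ and $2k\ge 4$ one gets $d_E(C)\ge 8k$, while $d_E(C)\le 2(2k)\lfloor 40/24\rfloor+2(2k)=8k$ by~(\ref{eq:nbound}); hence $d_E(C)=8k$, so $C$ is extremal, and $A_{2k}(C)\cong L$ is even, so $C$ is Type~II by~\cite{BDHO}. Thus everything reduces to exhibiting, for each $k\ge 2$, a $2k$-frame inside some extremal even unimodular lattice in dimension~$40$.

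For this I would use the four odd unimodular lattices $A_4(C_{40,4}(P_{20}))$, $A_9(C_{9,40})$, $A_{13}(C_{13,40})$ and $A_{19}(C_{19,40})$, each of which is extremal and for each of which (as verified just above) at least one even unimodular neighbor is extremal. The key observation is that a $2k$-frame consists of vectors of even norm $2k$, so it lies in the even sublattice of the odd lattice that carries it, and therefore in that extremal even neighbor. It remains only to place a $2k$-frame in one of the four odd lattices, and here I would split on the prime factorization of $2k\ge 4$: if $2k\ne 2^{m_1}3^{m_2}13^{m_3}19^{m_4}$, then $A_4(C_{40,4}(P_{20}))$ has a $2k$-frame by Lemma~\ref{lem:key}; if $4\mid 2k$, then $A_4(C_{40,4}(P_{20}))$ has a $2k$-frame since it has a $4$-frame and $40$ is divisible by $4$ (Lemma~\ref{lem:frame}); and if $2k=2\cdot 3^{m_2}13^{m_3}19^{m_4}$ with $m_3\ge 1$, $m_4\ge 1$, or $m_2\ge 2$, then the $13$-, $19$-, or $9$-frame of $A_{13}(C_{13,40})$, $A_{19}(C_{19,40})$, or $A_9(C_{9,40})$ together with Lemma~\ref{lem:frame} produces a $2k$-frame. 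Going through these divisibility conditions shows that the \emph{only} even integer $2k\ge 4$ not covered is $2k=6$.

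The main obstacle is thus the lone value $k=3$: one needs a $6$-frame in an extremal even unimodular lattice in dimension $40$, equivalently an extremal Type~II $\ZZ_6$-code of length $40$. I would treat it in the same spirit, starting from the extremal odd unimodular lattice $A_6(C)$ attached to the extremal Type~I $\ZZ_6$-code $C$ of~\cite{GH05} (it contains a $6$-frame, which again has even norm and hence lies in its even sublattice) and checking by {\sc Magma} that one of its even unimodular neighbors is extremal; should that fail, one can instead invoke the explicit extremal Type~II $\ZZ_6$-code of length $40$ from~\cite{HM12}. Putting together $k=1$ (binary), the generic range $k\ge 2$ with $k\ne 3$, and the exceptional value $k=3$, and applying the argument of the first paragraph, then completes the proof.
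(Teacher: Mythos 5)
Your proposal follows essentially the same route as the paper's remark: the case $k=1$ comes from the $16470$ binary extremal Type~II codes of length $40$~\cite{BHM}, and for $k\ge 2$ one places an even-norm $2k$-frame in the even sublattice, hence in the {\sc Magma}-verified extremal even unimodular neighbor, of one of $A_4(C_{40,4}(P_{20}))$, $A_9(C_{9,40})$, $A_{13}(C_{13,40})$, $A_{19}(C_{19,40})$, and then reads off extremality and the Type~II property via Lemma~\ref{lem:LtoC}, the bound (\ref{eq:nbound}), and~\cite{BDHO}. You are in fact more explicit than the paper: your divisibility analysis correctly isolates $2k=6$ as the one even value $\ge 4$ not reached through those four lattices (the paper's one-line derivation does not address it), and your proposed fix --- checking by {\sc Magma} that an even neighbor of $A_6(C_{6,40})$ from~\cite{GH05} is extremal, or else quoting the $\ZZ_6$ case directly from~\cite{HM12}, to which the corollary is attributed in any case --- is a legitimate way to close that gap.
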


%%%%%%%%%%%%%%%%%%%%%%%%%%%%%%%%%
\subsection{Type~I $\ZZ_k$-codes of length 44}

By considering $A_5(C_{44,5}(D_{22}))$ in Table~\ref{Tab:L},
it is sufficient to investigate the existence of
a $k$-frame in extremal odd unimodular lattices in 
dimension $44$ for $k=4,6,9,17$ by Lemma~\ref{lem:key}.
For $k=9,17$,
let $C_{k,44}$ be the $\ZZ_k$-code with generator 
matrix of the form (\ref{eq:GM}),
where the first rows $r_A$ and $r_B$ of $A$ and $B$
are listed in Table~\ref{Tab:44}.
Since $AA^T+BB^T=-I_{11}$, 
these codes are Type~I\@.
Moreover, we have verified 
by {\sc Magma} that $A_k(C_{k,44})$ is extremal ($k=9,17$).
For $k=4$ and $6$, 
an extremal Type~I $\ZZ_{k}$-code of length $44$ can be
found in~\cite[Table~1]{H12} and~\cite{GH05}, respectively.
Hence, combining with Lemma~\ref{lem:key}, we have the following:

\begin{prop}
There is an extremal odd unimodular lattice in dimension $44$
containing a $k$-frame if and only if 
$k$ is an integer with $k \ge 4$.
\end{prop}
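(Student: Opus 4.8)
The plan is to prove the two implications separately; the ``only if'' direction is immediate, and the ``if'' direction is an assembly of the frames already exhibited above together with Lemmas~\ref{lem:key} and~\ref{lem:frame}.

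For the ``only if'' direction I would note that an extremal odd unimodular lattice $L$ in dimension $44$ has $\min(L)=2\lfloor 44/24\rfloor+2=4$ by the bound recalled in Section~\ref{sec:Pre}, so every nonzero vector of $L$ has norm at least $4$; since the $44$ vectors of a $k$-frame all have norm exactly $k$ and $k\ge2$ is a positive integer, a $k$-frame can exist only when $k\ge4$.

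For the ``if'' direction I would proceed as follows. By Lemma~\ref{lem:key} the extremal odd unimodular lattice $A_5(C_{44,5}(D_{22}))$ contains a $k$-frame for every integer $k\ge4$ with $k\ne 2^{m_1}3^{m_2}17^{m_3}$, so it remains only to realize a $k$-frame in \emph{some} extremal odd unimodular lattice of dimension $44$ for each exceptional value $k=2^{m_1}3^{m_2}17^{m_3}\ge4$. Since $4\mid 44$, Lemma~\ref{lem:frame} upgrades a $d$-frame to a $dm$-frame for every $m\ge1$, and the four seeds $d\in\{4,6,9,17\}$ already exhaust these exceptional $k$: if $m_1\ge2$ then $4\mid k$; if $m_1=1$ and $m_2\ge1$ then $6\mid k$; if $m_2\ge2$ then $9\mid k$; and in every remaining case with $k\ge4$ one has $17\mid k$ (the powers $2^{m_1}3^{m_2}17^{m_3}$ avoiding all four divisibilities are exactly $1,2,3$). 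It therefore suffices to exhibit an extremal odd unimodular lattice in dimension $44$ containing a $d$-frame for $d=4,6,9,17$. For $d=9$ and $d=17$ I would take the $\ZZ_d$-code $C_{d,44}$ with generator matrix of the form (\ref{eq:GM}) and first rows $r_A,r_B$ as in Table~\ref{Tab:44}: the identity $AA^T+BB^T=-I_{11}$ makes $C_{d,44}$ self-dual (hence Type~I), so $A_d(C_{d,44})$ is a unimodular lattice containing a $d$-frame, and --- being of dimension $44$, which is not divisible by $8$ --- it is automatically odd; {\sc Magma} then confirms it is extremal, i.e.\ has minimum norm $4$. For $d=4$ and $d=6$ I would instead invoke the known extremal Type~I $\ZZ_4$-code of length $44$ of~\cite[Table~1]{H12} and the known extremal Type~I $\ZZ_6$-code of length $44$ of~\cite{GH05}; applying Construction~A to each gives an extremal odd unimodular lattice in dimension $44$ carrying a $4$-frame, respectively a $6$-frame. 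Together with the cases handled by Lemma~\ref{lem:key}, this produces, for every integer $k\ge4$, a $k$-frame in some extremal odd unimodular lattice of dimension $44$, completing the proof.

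The genuinely non-routine step, and the one I expect to be the main obstacle, is the explicit construction of the extremal self-dual codes $C_{9,44}$ and $C_{17,44}$ (equivalently, self-dual codes of length $44$ over $\ZZ_9$ and $\ZZ_{17}$ with minimum Euclidean weight $4d$): one has to search over negacirculant pairs $(A,B)$ satisfying $AA^T+BB^T=-I_{11}$ until the Construction~A lattice $A_d(C_{d,44})$ reaches minimum norm $4$, and only a machine computation certifies success. The remaining ingredients --- the minimum-norm bound used in the ``only if'' part, the elementary divisibility bookkeeping for the exceptional set $\{2^{m_1}3^{m_2}17^{m_3}\}$, and the invocations of Lemmas~\ref{lem:key} and~\ref{lem:frame} --- are routine.
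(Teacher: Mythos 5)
Your proposal is correct and follows essentially the same route as the paper: reduce via Lemma~\ref{lem:key} applied to $A_5(C_{44,5}(D_{22}))$ and Lemma~\ref{lem:frame} to the seed values $k=4,6,9,17$, handle $k=9,17$ by the negacirculant codes of Table~\ref{Tab:44} verified extremal by {\sc Magma}, and handle $k=4,6$ by the known extremal Type~I codes of \cite[Table~1]{H12} and \cite{GH05}. The only difference is that you spell out the (trivial) ``only if'' direction and the divisibility bookkeeping, which the paper leaves implicit.
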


\begin{rem}
The possible theta series of an extremal odd unimodular
lattice in dimension $44$ is given in~\cite{H03}:
$\theta_{44,1,\beta}(q)=1+(6600+16\beta)q^4+(811008-128\beta)q^5 + \cdots$,
$\theta_{44,2,\beta}(q)=1+(6600+16\beta)q^4+(679936-128\beta)q^5 + \cdots$,
where $\beta$ is an integer.
We have verified by {\sc Magma} that 
$A_5(C_{44,5}(D_{22}))$ and 
$A_k(C_{k,44})$ ($k=9,17$)
have theta series 
$\theta_{44,1,\beta}(q)$ ($\beta=0,88,176$) and automorphism groups 
of orders $44$, $88$, $44$, respectively.
% We have verified by {\sc Magma} that 
% $A_5(C_{44,5}(D_{22}))$ and 
% $A_k(C_{k,44})$ ($k=9,17$)
% have theta series 
% $1 + 6600 q^4 + 811008 q^5 + 37171200 q^6 + \cdots$,
% $1 + 8008 q^4 + 799744 q^5 + 37159936 q^6 + \cdots$ and
% $1 + 9416 q^4 + 788480 q^5 + 37148672 q^6 + \cdots$,
% and automorphism groups of orders
% $44$, $88$ and $44$, respectively.
\end{rem}
%%%%%%%%%%%%%%%%%%%%%%%%%%%%%%%%%%%%%%%%%%%%%%%%
\begin{table}[thb]
\caption{Extremal Type~I $\ZZ_{k}$-codes of length $44$}
\label{Tab:44}
\begin{center}
%{\small
{\footnotesize
%{\scriptsize
\begin{tabular}{c|l|l}
\noalign{\hrule height0.8pt}
Code & \multicolumn{1}{c|}{$r_A$} & \multicolumn{1}{c}{$r_B$} \\
\hline
$C_{9 ,44}$& $(0,0,0,0,1,0,1,4,0,8,0)$ & $(7,0,7,1,8,8,2,8,1,5,1)$ \\
$C_{17,44}$& $(0, 0, 0, 0, 1,13, 7,13,11,16,13)$ 
    &$(12,14, 8,14, 7,12,14, 7,14,14, 7)$ \\
\noalign{\hrule height0.8pt}
\end{tabular}
}
\end{center}
\end{table}
%%%%%%%%%%%%%%%%%%%%%%%%%%%%%%%%%%%%%%%%%%%%%%%%

For $k=2,3$, 
there is an extremal Type~I $\ZZ_k$-code of length $44$
(see~\cite[Tables X and XII]{RS-Handbook}).
Hence, by Lemma~\ref{lem:LtoC}, we have the following:

\begin{cor}
There is an extremal Type~I $\ZZ_{k}$-code of
length $44$ 
if and only if $k$ is an integer with $k \ge 2$.
% for every positive integer $k$ with $k \ge 2$.
\end{cor}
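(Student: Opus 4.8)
The plan is to combine the Proposition immediately preceding the statement with Lemma~\ref{lem:LtoC}, handling the small cases $k=2,3$ separately. Since we assume $k\ge 2$ throughout the paper, the ``only if'' direction is vacuous, so it suffices to establish the existence of an extremal Type~I $\ZZ_k$-code of length $44$ for every integer $k\ge 2$.

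First I would dispose of $k=2$ and $k=3$ by invoking the known extremal binary and ternary Type~I codes of length $44$ recorded in~\cite[Tables X and XII]{RS-Handbook}. Since $44$ is not divisible by $8$, these self-dual codes cannot be Type~II, so they are automatically Type~I.

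For $k\ge 4$, the Proposition provides an extremal odd unimodular lattice $L$ in dimension $44$ containing a $k$-frame. An extremal unimodular lattice in dimension $44$ has $\min(L)=2\lfloor 44/24\rfloor+2=4$, so Lemma~\ref{lem:LtoC} yields a self-dual $\ZZ_k$-code $C$ of length $44$ with $d_E(C)\ge k\min(L)=4k$. On the other hand, the bound (\ref{eq:nbound}) specializes at $n=44$ to $d_E(C)\le 2k\lfloor 44/24\rfloor+2k=4k$, because $44\ne 22,23,46,47$. Hence $d_E(C)=4k$, i.e.\ $C$ is extremal, and $C$ is Type~I since $44$ is not divisible by $8$. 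This completes the argument.

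There is essentially no hard step in the corollary itself: all of the substantive work — constructing the codes $C_{9,44}$ and $C_{17,44}$ explicitly, invoking the extremal $\ZZ_4$- and $\ZZ_6$-codes of~\cite{H12,GH05}, and producing $k$-frames for the remaining $k$ through Lemma~\ref{lem:prime} — is already packaged inside the Proposition. The only point requiring a word of care is confirming that the code extracted from the $k$-frame is genuinely Type~I rather than Type~II, which is immediate from the fact that $44$ is not divisible by $8$.
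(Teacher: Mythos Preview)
Your proof is correct and follows exactly the same line as the paper: handle $k=2,3$ by citing \cite[Tables~X and~XII]{RS-Handbook}, and for $k\ge 4$ combine the preceding Proposition with Lemma~\ref{lem:LtoC}. The extra details you supply (verifying $d_E(C)=4k$ via the bound~(\ref{eq:nbound}) and noting that $8\nmid 44$ forces Type~I) are implicit in the paper and only make the argument more explicit.
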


%%%%%%%%%%%%%%%%%%%%%%%
\subsection{Remarks on Type~I $\ZZ_k$-codes of length 48}\label{sec:rem}

By considering $A_5(C_{48,5}(D_{24}))$ in Table~\ref{Tab:L},
we examine the existence of
a $k$-frame in optimal odd unimodular lattices in 
dimension $48$ for $k=6, 7, 8, 9, 17$ by Lemma~\ref{lem:key}.
It was shown in~\cite{HKMV} that
an extremal even unimodular lattice in dimension $48$
has an optimal odd unimodular neighbor.
Using this result, we have the following:

\begin{lem}\label{lem:6-1}
There is an optimal odd unimodular lattice in dimension $48$
containing an $8k$-frame for every positive integer $k$.
\end{lem}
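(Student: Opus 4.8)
\textbf{Proof proposal for Lemma~\ref{lem:6-1}.} The plan is to start from an extremal even unimodular lattice $L$ in dimension $48$ (for instance $P_{48p}$ or any of the known extremal even unimodular lattices), and invoke the result of~\cite{HKMV} that $L$ has an optimal odd unimodular neighbor $N$; thus $\min(N)=5$. Since $48$ is divisible by $4$, Lemma~\ref{lem:frame} reduces the problem to producing a single $8$-frame in some optimal odd unimodular lattice, because once such a frame exists the lattice automatically contains an $8k$-frame for every positive integer $k$.

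First I would exhibit a self-dual $\ZZ_8$-code $C$ of length $48$ with $d_E(C)\ge 8\cdot 5=40$; by Construction~A and the relation $\min(A_8(C))=\min\{8,d_E(C)/8\}=5$, the lattice $A_8(C)$ is then an optimal odd unimodular lattice, and by the characterization quoted from~\cite{HMV} (a unimodular lattice contains a $k$-frame iff it is $A_k$ of some self-dual $\ZZ_k$-code), $A_8(C)$ contains an $8$-frame. Equivalently, and this is the route I expect to be cleanest in practice, I would directly produce an $8$-frame inside an explicit optimal odd unimodular lattice: take the odd neighbor $N$ of an extremal even unimodular lattice, search (by {\sc Magma}) for $48$ pairwise orthogonal vectors of norm $8$ in $N$, and record the resulting self-dual $\ZZ_8$-code. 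Either formulation gives the $8$-frame; Lemma~\ref{lem:frame} then upgrades it to an $8k$-frame for all $k\ge 1$.

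The main obstacle is the explicit construction step: one must actually locate a self-dual $\ZZ_8$-code of length $48$ whose Construction~A lattice has minimum norm $5$ (equivalently, an $8$-frame in an optimal odd unimodular lattice), and verify the minimum norm by computer. The existence of \emph{some} optimal odd unimodular lattice is guaranteed by~\cite{HKMV}, but that lattice is not a priori of the form $A_8(C)$; the content of the lemma is precisely that at least one optimal odd unimodular lattice does admit such a $\ZZ_8$-structure. I would handle this exactly as in the analogous length~$32$ and~$40$ arguments earlier in the paper: use a generator matrix of the form~(\ref{eq:GM}) with $n=12$ negacirculant blocks $A,B$ satisfying $AA^T+BB^T=-I_{12}$ over $\ZZ_8$, search for first rows $r_A,r_B$ making $d_E\ge 40$, and then confirm $\min(A_8(C))=5$ with {\sc Magma}. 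Once one such code is in hand, the rest is immediate from Lemma~\ref{lem:frame}.
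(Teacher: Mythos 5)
Your reduction is fine: since $48$ is divisible by $4$, Lemma~\ref{lem:frame} does reduce the lemma to exhibiting a single $8$-frame in some optimal odd unimodular lattice in dimension $48$, and the equivalence with self-dual $\ZZ_8$-codes via Construction~A is correctly invoked. But the proposal never actually produces that $8$-frame (equivalently, a self-dual $\ZZ_8$-code of length $48$ with $d_E\ge 40$ whose Construction~A lattice has minimum norm $5$); you explicitly defer this to a {\sc Magma} search ``as in the length $32$ and $40$ cases'' and call it the main obstacle. That deferred step is precisely the content of the lemma, and in dimension $48$ such searches are exactly where difficulties arise (the paper itself reports an unsuccessful extensive search for a near-extremal $\ZZ_{17}$-code of length $48$), so the argument as written has a genuine gap: existence is asserted to be findable, not established.

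The paper closes this gap without any search, by a structural observation you are missing. By~\cite[Corollary~1]{Chapman-Sole} there is an extremal \emph{even} unimodular lattice $\Lambda$ in dimension $48$ that already contains an $8$-frame $\{f_1,\dots,f_{48}\}$. Form the odd neighbor of~\cite{HKMV} with respect to the frame vector $x=f_1$ (which has norm $8$): $\Lambda_{f_1}=\Lambda_{f_1}^+\cup\bigl(\tfrac{1}{2}f_1+y\bigr)+\Lambda_{f_1}^+$, where $\Lambda_{f_1}^+=\{v\in\Lambda\mid (f_1,v)\equiv 0\pmod 2\}$. Since $(f_1,f_i)=8\delta_{1,i}$ is even for every $i$, all frame vectors lie in $\Lambda_{f_1}^+\subset\Lambda_{f_1}$, so the optimal odd neighbor inherits the whole $8$-frame; Lemma~\ref{lem:frame} then gives $8k$-frames for all $k$. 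In short, the key idea is not to search for a frame in the odd lattice but to choose the neighbor so that a known frame of the even lattice survives; your second route (``take the odd neighbor $N$ and search for $48$ orthogonal norm-$8$ vectors'') is close in spirit but replaces this guaranteed transfer by an unverified computation.
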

\begin{proof}
Let $\Lambda$ be 
an extremal even unimodular lattice in dimension $48$.
Let $x$ be a vector of $\Lambda$ with $(x,x)=8$.
Note that there are vectors of norm $8$ in  $\Lambda$
(see~\cite[Chap.~7, (68)]{SPLAG}).
Put $\Lambda_x^{+}=\{v\in\Lambda\mid(x,v)\equiv0\pmod2\}$.
Since there is a vector $y$ of $\Lambda$ such
that $(x,y)$ is odd, the following lattice
\[
\Lambda_{x}=
\Lambda_x^+ \cup \Big(\frac{1}{2}x+y\Big)+\Lambda_x^+
\]
is an optimal odd unimodular neighbor of $\Lambda$~\cite{HKMV}.

Some extremal even unimodular lattice in
dimension $48$ containing an $8$-frame can be found 
in~\cite[Corollary~1]{Chapman-Sole}.
We take this lattice as $\Lambda$ in the above construction.
Let $\{f_1, \ldots, f_{48}\}$ be an $8$-frame in $\Lambda$.
Then $\Lambda_{f_1}$ is an optimal odd unimodular neighbor
containing $\{f_1, \ldots, f_{48}\}$.
The result follows from Lemma~\ref{lem:frame}.
\end{proof}

%%%%%%%%%%%%%%%%%%%%%%%%%%%%%%%%%%%%%%%%%%%%%%%%
\begin{table}[thb]
\caption{Near-extremal Type~I $\ZZ_{k}$-codes of length $48$}
\label{Tab:48}
\begin{center}
%{\small
{\footnotesize
%{\scriptsize
\begin{tabular}{c|l|l}
\noalign{\hrule height0.8pt}
Code & \multicolumn{1}{c|}{$r_A$} & \multicolumn{1}{c}{$r_B$} \\
\hline
$C_{7 ,48}$& $(0,1,6,3,0,2,0,2,4,2,5,3)$&$(3,6,1,5,4,6,0,5,0,5,1,5)$\\
$C_{9 ,48}$& $(0,1,2,4,6,1,6,2,2,0,3,0)$&$(7,2,5,1,6,8,4,1,2,2,8,4)$\\
\noalign{\hrule height0.8pt}
\end{tabular}
}
\end{center}
\end{table}
%%%%%%%%%%%%%%%%%%%%%%%%%%%%%%%%%%%%%%%%%%%%%%%%

Some near-extremal Type~I $\ZZ_6$-code $C_{6,48}$ of length $48$
can be found in~\cite{HKMV}.
% there is an optimal odd unimodular lattice in dimension $48$
% containing a $6k$-frame for every positive integer $k$.
For $k=7,9$,
let $C_{k,48}$ be the $\ZZ_k$-code with generator 
matrix of the form (\ref{eq:GM}),
where the first rows $r_A$ and $r_B$ of $A$ and $B$
are listed in Table~\ref{Tab:48}.
Since $AA^T+BB^T=-I_{12}$, 
these codes are Type~I\@.
Moreover, we have verified 
by {\sc Magma} that $A_k(C_{k,48})$ is optimal ($k=7,9$).
Hence, we have the following:

\begin{prop}
There is an optimal odd unimodular lattice in dimension $48$
containing a $k$-frame 
for every integer $k$ with $k \ge 5$ and
$k \ne 2^{m_1}3^{m_2}17^{m_3}$,
where 
$m_i$ are integers $(i=1,2,3)$ with 
$(m_1,m_2) \in \{(0,0),(0,1),(1,0),(2,0)\}$ and $m_3 \ge 1$.
% $(m_1, m_2) \in \{(0,a) \mid a \in \ZZ, a \ge 0\} \cup
% \{(1,0),(2,0)\}$.
\end{prop}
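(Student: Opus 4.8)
The plan is to combine three sources of $k$-frames in optimal odd unimodular lattices of dimension $48$, each valid on an overlapping range of $k$, so that together they cover every integer $k\ge 5$ except the stated exceptional set $2^{m_1}3^{m_2}17^{m_3}$ with the restricted exponents. First I would invoke Lemma~\ref{lem:key} applied to the lattice $A_5(C_{48,5}(D_{24}))$ from Table~\ref{Tab:L}: since $(k,m,\ell)=(5,39,0)$ there and the relevant number-theoretic input is Lemma~\ref{lem:prime}(h), that lattice contains a $k$-frame for every $k\ge 5$ with $k\ne 2^{m_1}3^{m_2}7^{m_3}17^{m_4}$. Thus the only residual exponents to worry about are those of the form $2^{m_1}3^{m_2}7^{m_3}17^{m_4}$, and in particular we must still handle the cases where $7$ or an extra factor of $2$ or $3$ appears. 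This is where the explicitly constructed codes enter.

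Next I would feed in the explicit near-extremal Type~I codes. From Table~\ref{Tab:48} the codes $C_{7,48}$ and $C_{9,48}$ give, via Construction~A, optimal odd unimodular lattices $A_7(C_{7,48})$ and $A_9(C_{9,48})$; since these lattices contain $7$- and $9$-frames respectively and the dimension $48$ is divisible by $4$, Lemma~\ref{lem:frame} promotes these to $7m$- and $9m$-frames for every positive integer $m$. Together with the near-extremal Type~I $\ZZ_6$-code $C_{6,48}$ from~\cite{HKMV} (giving a $6m$-frame for all $m$) and Lemma~\ref{lem:6-1} (giving an $8k$-frame for all $k$), we obtain optimal odd unimodular lattices with $k$-frames for every multiple of $6$, $7$, $8$, or $9$. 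The bookkeeping step is then: any integer $k\ge 5$ that is \emph{not} a multiple of $6,7,8$ or $9$ must have the form $2^{m_1}3^{m_2}$ with $(m_1,m_2)\in\{(0,0),(0,1),(1,0),(2,0)\}$ — i.e.\ $k\in\{1,2,3,4\}$ — or else be divisible by an "allowed" odd prime, which the Lemma~\ref{lem:key} argument already covered, or be divisible by $17$ but not by $2^3$, $3^2$, $7$; chasing through this leaves exactly the stated exceptional set. Intersecting the exceptional set from Lemma~\ref{lem:key} (powers of $2,3,7,17$ only) with "not a multiple of $6,7,8,9$" yields precisely $k=2^{m_1}3^{m_2}17^{m_3}$ with $(m_1,m_2)\in\{(0,0),(0,1),(1,0),(2,0)\}$ and $m_3\ge 1$, together with $k<5$, which is excluded by hypothesis.

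The main obstacle is the combinatorial verification that the union of the arithmetic progressions $6\ZZ$, $7\ZZ$, $8\ZZ$, $9\ZZ$ together with the Lemma~\ref{lem:key} set $\{k\ge5: k\ne 2^{m_1}3^{m_2}7^{m_3}17^{m_4}\}$ really does cover all of $\{k\ge 5\}$ except the claimed set; this requires checking that an integer of the form $2^{m_1}3^{m_2}7^{m_3}17^{m_4}$ with $m_3\ge 1$ is a multiple of $7$ (hence covered by $C_{7,48}$), that one with $m_1\ge 3$ is covered by Lemma~\ref{lem:6-1}, that one with $m_2\ge 2$ is a multiple of $9$ (covered by $C_{9,48}$), and — most delicately — that one with $m_1\in\{1,2\}$ and $m_2\ge1$ is a multiple of $6$ (covered by $C_{6,48}$), while $m_1\in\{1,2\}$ with $m_2=0$ gives $k\in\{2,4,34,68,\dots\}$ and $m_1=0,m_2=1$ gives $k\in\{3,51,\dots\}$, all of which are either $<5$ or of the form $2^{m_1}3^{m_2}17^{m_3}$. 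I would present this as a short case analysis on $(m_1,m_2,m_3,m_4)$, appealing throughout to Lemma~\ref{lem:frame} to pass from a base frame to its multiples and to Lemma~\ref{lem:key}, Lemma~\ref{lem:6-1}, and the {\sc Magma} verifications that $A_7(C_{7,48})$ and $A_9(C_{9,48})$ are optimal.
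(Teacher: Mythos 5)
Your proposal is correct and follows essentially the same route as the paper: the base set of frames from Lemma~\ref{lem:key} applied to $A_5(C_{48,5}(D_{24}))$ (case (h)), supplemented by the $6$-, $7$-, $9$-frames from $C_{6,48}$, $C_{7,48}$, $C_{9,48}$ and the $8k$-frames of Lemma~\ref{lem:6-1}, all promoted to multiples via Lemma~\ref{lem:frame}, leaving exactly the stated exceptional set $2^{m_1}3^{m_2}17^{m_3}$ with the restricted exponents. The paper leaves the final bookkeeping implicit, whereas you spell it out; no gap.
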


\begin{rem}
$A_6(C_{6,48})$ has kissing number $393216$
\cite[p.~553]{HKMV}.
In addition, we have verified by {\sc Magma} that
$A_5(C_{48,5}(D_{24}))$,
$A_7(C_{7,48})$ and
$A_9(C_{9,48})$ have
kissing number $393216$.
\end{rem}

% \begin{rem}
% $A_6(C_{6,48})$ has an extremal even unimodular 
% neighbor \cite[Proposition 4.4]{HKMV} and 
% $\Lambda_x$ in the proof of Lemma \ref{lem:6-1}
% has an extremal even unimodular neighbor.
% We have verified by {\sc Magma} that
% $A_5(C_{48,5}(D_{24}))$ has an extremal even unimodular neighbor,
% $A_7(C_{7,48})$ has an extremal even unimodular neighbor, and
% $A_9(C_{9,48})$ has no extremal even unimodular neighbor.
% \end{rem}

% There are at least $264$ inequivalent binary near-extremal Type~I 
% codes of length $48$~\cite{BYK}.
% There are at least two inequivalent ternary near-extremal Type~I code 
% of length $48$~\cite{Pless72}.
For $k=2,3$, 
there is a near-extremal Type~I $\ZZ_k$-code of length $48$
(see~\cite[Tables X and XII]{RS-Handbook}).
% Recently,  a near-extremal Type~I $\ZZ_4$-code of length $48$
% has been found in~\cite{H-pre}.
Let $C_{4,48}$ be the $\ZZ_4$-code with generator matrix:
\[
\left(
\begin{array}{ccc@{}c}
\quad & {\Large I_{24}} & \quad &
\begin{array}{cccc}
0 & 1  & \cdots & 1 \\
1 & {}     & {}     &{} \\
\vdots & {}     & R      &{} \\
1 & {}     &{}      &{} \\
\end{array}
\end{array}
\right),
\]
where $R$ is the $23 \times 23$
circulant matrix with first row 
\[
(1,1,3,0,3,3,1,2,0,1,3,2,3,0,0,3,3,2,1,2,1,1,0). 
\]
We have verified by {\sc Magma} that $C_{4,48}$
is a near-extremal Type~I $\ZZ_4$-code of length $48$.
Hence, by Lemma~\ref{lem:LtoC}, we have the following:

\begin{cor}
There is a near-extremal Type~I $\ZZ_{k}$-code of length $48$
for $k=2,3,4$ and 
for integers $k$ with $k \ge 5$,
$k \ne 2^{m_1}3^{m_2}17^{m_3}$,
where 
$m_i$ are integers $(i=1,2,3)$ with 
$(m_1,m_2) \in \{(0,0),(0,1),(1,0),(2,0)\}$ and $m_3 \ge 1$.
% for integers $k$ with $k \ge 5$,
% $k \ne 2^{m_1}3^{m_2}17^{m_4}$,
% where $m_i$ are non-negative integers $(i=1,2,3)$ with 
% $(m_1, m_2) \in \{(0,a) \mid a \in \ZZ, a \ge 0\} \cup\{(1,0),(2,0)\}$.
\end{cor}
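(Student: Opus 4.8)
The plan is to assemble the final corollary from the pieces already developed in this subsection. The corollary asserts the existence of a near-extremal Type~I $\ZZ_k$-code of length $48$ in three separate ranges, so I would treat each range in turn. For $k=2,3$, the statement is simply a citation to the tables in~\cite{RS-Handbook}, which list a near-extremal binary and ternary self-dual code of length $48$ (a self-dual code whose minimum Euclidean weight is one step below the bound~(\ref{eq:nbound}); recall that for $n=48$ the extremal value is $6k$, so near-extremal means $d_E(C) = 5k$). For $k=4$, I would invoke the explicit code $C_{4,48}$ constructed just above, whose generator matrix is displayed with the $23\times 23$ circulant block $R$; the text already records that \textsc{Magma} has verified $C_{4,48}$ is a near-extremal Type~I $\ZZ_4$-code, so nothing further is needed there.

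For the remaining range $k \ge 5$ with $k \ne 2^{m_1}3^{m_2}17^{m_3}$ (subject to the stated restrictions on the $m_i$), I would appeal to the immediately preceding Proposition, which guarantees an optimal odd unimodular lattice $L$ in dimension $48$ containing a $k$-frame for exactly these values of $k$. Given such a $k$-frame, Lemma~\ref{lem:LtoC} produces a self-dual $\ZZ_k$-code $C$ with $d_E(C) \ge k\min(L)$. Since $L$ is optimal, $\min(L)=5$ (the largest minimum norm of an odd unimodular lattice in dimension $48$, as recalled in the proof of Lemma~\ref{lem:bound2}), hence $d_E(C)\ge 5k$. Combined with the upper bound $d_E(C)\le 6k$ from~(\ref{eq:nbound}) and the divisibility of Euclidean weights by $k$, this forces $d_E(C)\in\{5k,6k\}$, i.e.\ $C$ is either extremal or near-extremal. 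I would then need to rule out $d_E(C)=6k$: an extremal self-dual $\ZZ_k$-code of length $48$ must be Type~II by the Lemma in Section~\ref{sec:Pre}(b) (at least for even $k$), but more directly, $d_E(C)=6k$ would make $\min(A_k(C))=\min\{k,6\}$ — for $k\ge 6$ this is $6$, contradicting that $A_k(C)\cong L$ has minimum norm $5$, and for $k=5$ it would give an extremal-norm situation handled by Lemma~\ref{lem:bound2}/the kissing-number argument. Actually the cleanest route is: since $A_k(C)\cong L$ and $\min(L)=5$, we get $\min\{k,d_E(C)/k\}=5$, and as $k\ge 5$ this yields $d_E(C)/k=5$, i.e.\ $d_E(C)=5k$ exactly, so $C$ is near-extremal. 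That disposes of the third range.

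Finally I would stitch the three cases together into one statement. The main obstacle, such as it is, is purely bookkeeping: making sure the parameter restrictions on $(m_1,m_2,m_3)$ in the corollary match precisely those in the preceding Proposition, and confirming that the excluded values $k=2^{m_1}3^{m_2}17^{m_3}$ with the listed exponent constraints are genuinely not covered by Lemma~\ref{lem:key}'s case (h) (which excludes $p=2,3,7,17$) nor by the explicit codes $C_{6,48},C_{7,48},C_{9,48}$ of Table~\ref{Tab:48} together with Lemma~\ref{lem:6-1}'s $8k$-frames. In other words, one must check that $\{5,6,7,8,9\}$ and their multiples generated via Lemma~\ref{lem:frame} cover everything except the residual family, which is exactly what the preceding Proposition already asserts. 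So the proof is essentially a one-line deduction from the Proposition via Lemma~\ref{lem:LtoC} for $k\ge 5$, plus citations/constructions for $k\le 4$; no new hard analysis is required. I would write:

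\begin{proof}
For $k=2,3$, there is a near-extremal Type~I $\ZZ_k$-code of length $48$ by~\cite[Tables~X and~XII]{RS-Handbook}, and $C_{4,48}$ above is a near-extremal Type~I $\ZZ_4$-code of length $48$. Now let $k\ge 5$ with $k\ne 2^{m_1}3^{m_2}17^{m_3}$ for the stated exponents. By the previous proposition there is an optimal odd unimodular lattice $L$ in dimension $48$, so $\min(L)=5$, containing a $k$-frame. By Lemma~\ref{lem:LtoC} there is a self-dual $\ZZ_k$-code $C$ with $A_k(C)\cong L$; hence $\min\{k,d_E(C)/k\}=\min(A_k(C))=5$, and since $k\ge 5$ this gives $d_E(C)=5k$. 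By the bound~(\ref{eq:nbound}) an extremal such code would have $d_E(C)=6k$, so $C$ is near-extremal.
\end{proof}
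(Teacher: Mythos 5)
Your decomposition is exactly the paper's: $k=2,3$ from~\cite[Tables~X and~XII]{RS-Handbook}, $k=4$ from the explicit code $C_{4,48}$, and all remaining $k$ from the preceding proposition combined with Lemma~\ref{lem:LtoC}; the paper states the corollary with no further argument, so your write-up merely makes the intended deduction explicit, and in outline it is right.

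One step in your final proof, however, is not valid as written. From $\min(A_k(C))=\min\{k,d_E(C)/k\}=5$ and $k\ge 5$ you conclude $d_E(C)=5k$, but at $k=5$ this fails: $\min\{5,d_E(C)/5\}=5$ only forces $d_E(C)\ge 25$, and $d_E(C)=30$ is not excluded by the lattice minimum norm, nor by Lemma~\ref{lem:bound2}, which you cite as the patch but which only gives $d_E(C)\le 6k$. The correct tool is the one you half-name in your discussion: the unnumbered lemma of Section~\ref{sec:Pre} stating that a Type~I $\ZZ_k$-code of length $48$ satisfies $d_E(C)\le 5k$, whose $k=5$ case is precisely the kissing-number argument; it applies here because every self-dual $\ZZ_5$-code is Type~I (Type~II is defined only for even moduli). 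Equivalently, argue directly that $d_E(C)=30$ would force every norm-$5$ vector of $A_5(C)\cong L$ to be of the form $\pm\sqrt{5}\,e_i$, giving kissing number $96$, contradicting the values $385024$ or $393216$ for optimal odd unimodular lattices in dimension $48$~\cite{HKMV}. Finally, since the corollary asserts Type~I codes, for even $k$ you should add the one-line remark that $C$ cannot be Type~II because $A_k(C)\cong L$ is odd, whereas Construction~A applied to a Type~II code yields an even lattice; with these two repairs your argument coincides with the paper's.
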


Using the method in the previous subsections, we tried to construct a
near-extremal Type~I $\ZZ_{17}$-code of length $48$.
However, our extensive search failed to discover such
a code, then we stopped our search at length $48$.
It is worthwhile to determine whether there is a
near-extremal Type~I $\ZZ_{17}$-code of length $48$.

%%%%%%%%%%%%%%%%%%%%%%
\bigskip
\noindent {\bf Acknowledgments.}
The author would like to thank Tsuyoshi Miezaki for useful discussions,
%and for his kind permission to include his results given in 
for carefully reading the manuscript 
and for kindly providing the results given in Lemma~\ref{lem:prime},
%The author would like to thank 
and Masaaki Kitazume and Hiroki Shimakura
for helpful conversations.
The author would also like to thank the anonymous referees for
valuable comments leading to several improvements of the presentation.
This work is supported by JSPS KAKENHI Grant Number 23340021.

%%%%%%%%%%%%%%%%%  References  %%%%%%%%%%%%%%%%%%%%%%%%

\end{document}